\def\arraypar#1{\parbox[c]{\textwidth - 2cm}{\centering #1}}
\def\clap#1{\hbox to 0pt{\hss#1\hss}}
\makeatletter \@addtoreset{equation}{section}
\makeatletter \@addtoreset{enunciato}{section}
\newcounter{enunciato}[section]
\newtheorem{ittheorem}{Theorem}
\newtheorem{itlemma}{Lemma}
\newtheorem{itproposition}{Proposition}
\newtheorem{itdefinition}{Definition}
\newtheorem{itremark}{Remark}
\newtheorem{itclaim}{Claim}
\newtheorem{itfact}{Fact}
\newtheorem{itconjecture}{Conjecture}
\newtheorem{itcorollary}{Corollary}
\newenvironment{theorem}{\addtocounter{enunciato}{1}
\begin{ittheorem}}{\end{ittheorem}}
\newenvironment{lemma}{\addtocounter{enunciato}{1}
\begin{itlemma}}{\end{itlemma}}
\newenvironment{proposition}{\addtocounter{enunciato}{1}
\begin{itproposition}}{\end{itproposition}}
\newenvironment{definition}{\addtocounter{enunciato}{1}
\begin{itdefinition}}{\end{itdefinition}}
\newenvironment{remark}{\addtocounter{enunciato}{1}
\begin{itremark}}{\end{itremark}}
\newenvironment{conjecture}{\addtocounter{enunciato}{1}
\begin{itconjecture}}{\end{itconjecture}}
\newenvironment{corollary}{\addtocounter{enunciato}{1}
\begin{itcorollary}}{\end{itcorollary}}
\newcommand{\be}[1]{\begin{equation}\label{#1}}
\newcommand{\ee}{\end{equation}}
\newcommand{\bl}[1]{\begin{lemma}\label{#1}}
\newcommand{\el}{\end{lemma}}
\newcommand{\br}[1]{\begin{remark}\label{#1}}
\newcommand{\er}{\end{remark}}
\newcommand{\bt}[1]{\begin{theorem}\label{#1}}
\newcommand{\et}{\end{theorem}}
\newcommand{\bd}[1]{\begin{definition}\label{#1}}
\newcommand{\ed}{\end{definition}}
\newcommand{\bp}[1]{\begin{proposition}\label{#1}}
\newcommand{\ep}{\end{proposition}}
\newcommand{\bc}[1]{\begin{corollary}\label{#1}}
\newcommand{\ec}{\end{corollary}}
\newcommand{\bcj}[1]{\begin{conjecture}\label{#1}}
\newcommand{\ecj}{\end{conjecture}}
\newcommand{\bpr}{\begin{proof}}
\newcommand{\epr}{\end{proof}}
\def\Z{\mathbb{Z}}
\def\N{\mathbb{N}}
\def\R{\mathbb{R}}
\def\P{\mathbb{P}}
\def\E{\mathbb{E}}
\def \ba {\begin{array}}
\def \ea {\end{array}}
\def \P  {{\mathbb P}}
\def \E  {{\mathbb E}}
\def \cE {{\mathcal E}}
\def \cW {{\mathcal W}}
\DeclareSymbolFont{symbolsC}{U}{pxsyc}{m}{n}
\DeclareMathSymbol{\opentimes}{\mathrel}{symbolsC}{93}
\newcommand{\um}{{\angle}}
\newcommand{\tres}{{\mathbin{\, \text{\rotatebox[origin=c]{180}{$\angle$}}}}}
\newcommand{\treze}{{\mathbin{\text{\rotatebox[origin=c]{35}{$\opentimes$}}}}}
\newcounter{constant}
\newcommand{\newconstant}[1]{\refstepcounter{constant}\label{#1}}
\newcommand{\useconstant}[1]{c_{\textnormal{\tiny \ref{#1}}}}
\newcolumntype{e}{>{\displaystyle}r @{\,} >{\displaystyle}c @{\,} >{\displaystyle}l}
\newcommand{\footremember}[2]{%
    \footnote{#2}
    \newcounter{#1}
    \setcounter{#1}{\value{footnote}}%
}
\newcommand{\footrecall}[1]{%
    \footnotemark[\value{#1}]%
} 
\title{Random walk on random walks: low densities}
\author{
  Oriane Blondel \footnote{CNRS, Univ Lyon, Université Claude Bernard Lyon 1, CNRS UMR 5208, Institut Camille Jordan, 43 boulevard du 11 novembre 1918 -- 69622, France}
  \and
  Marcelo R.\ Hilário \footnote{Universidade Federal de Minas Gerais, Dep. de Matemática, 31270-901 Belo Horizonte} \footremember{NYUSh}{NYU-Shanghai, 1555 Century Av., Pudong Shanghai, CN 200122}
    \and
  Renato S.\ dos Santos \footnote{Weierstrass Institute for Applied Analysis and Stochastics, Mohrenstr. 39, 10117 Berlin}
  \and
  Vladas Sidoravicius \footnote{Courant Institute, NYU, 251 Mercer Street New York, NY 10012} \footrecall{NYUSh}
  \and
  Augusto Teixeira \thanks{Instituto de Matemática Pura e Aplicada, Estrada Dona Castorina 110, 22460-320 Rio de Janeiro}
  }
\date{\today}
\begin{document}

\maketitle

\begin{abstract}
We consider a random walker in a dynamic random environment given by a system of independent
simple symmetric random walks.
We obtain ballisticity results under two types of perturbations: 
low particle density, and strong local drift on particles.
Surprisingly, the random walker may behave very differently 
depending on whether the underlying environment particles perform lazy or non-lazy random walks,
which is related to a notion of permeability of the system.
We also provide a strong law of large numbers, a functional central limit theorem
and large deviation bounds under an ellipticity condition.
\end{abstract}


\section{Introduction and main results}
\label{s:intro}
The present article is a continuation of the works \cite{manyRW, HHSST14}
concerning the behaviour of a random walker in a dynamic random environment (RWDRE)
given by a system of independent simple symmetric random walks.
These works are focused on the high density regime in one and higher dimensions,
respectively.
Here we will consider the low density regime  in one dimension,
and also the case of a strong local drift on particles.
As indicated in \cite{manyRW, HHSST14},
the main challenge in this model stems from the relatively poor mixing properties of the random environment.
In fact, these properties become even worse as the density decreases,
which poses additional difficulties in our setting.
A brief overview of connections to the literature will be given in
Section~\ref{ss:connections} below.

Let us introduce the environment over which we will define our random walker.
Let $\Z_+ := \N \cup \{0\}$ where $\N$ is the set of positive integers.
Fix $\rho >0$ and let $(N(x,0))_{x \in \Z}$ be an i.i.d.\ collection of Poisson($\rho$) random variables.
Let $(S^{z,i})_{z \in \Z, i \in \N}$ be a collection of simple symmetric random walks on $\Z$,
independent of $(N(x,0))_{x \in \Z}$ and such that $(S^{z,i}-z)_{z \in \Z, i \in \N}$ are centered,
independent and identically distributed.
We call $S^{z,i}$ with $i\le N(z,0)$ a \emph{particle}.
We then let $N(x,t) := \sum_{z \in \Z, i\le N(z,0)} \mathbbm{1}_{\{S^{z,i}_t = x\}}$,
i.e., $N(x,t)$ is the number of particles present at the space-time point $(x,t)$.

To define the random walker $X = (X_t)_{t \in \Z_+}$,
let $p_\circ, p_\bullet \in [0,1]$.
For a fixed realization of $N = (N(x,t))_{x \in \Z, t \in \Z_+}$,
 $X$ is defined as the time-inhomogeneous Markov chain on $\Z$ that starts at $0$ and,
when at position $x$ at time $t$, jumps to $x+1$ with probability
\begin{equation}\label{e:defX}
p_\circ \; \text{ if } \; N(x,t) = 0, \quad \text{ or } \quad
p_\bullet \; \text{ if } \; N(x,t) \ge 1,
\end{equation}
and jumps to $x-1$ otherwise.
The parameters $p_\circ, p_\bullet \in [0,1]$ thus
represent the chance for random walker to jump to the right in the absence (respectively, presence) of particles.
It will be also convenient to define the local drifts
\begin{equation}
v_\circ:=2p_\circ-1, \quad v_\bullet:=2p_\bullet -1.
\end{equation}
The case $v_\circ v_\bullet>0$ is called \emph{non-nestling} and has already been treated in \cite{HHSST14}.
Here, we will focus on the case
\begin{equation}\label{e:assumpvel}
v_\bullet \leq 0 < v_\circ,
\end{equation}
meaning that random walker has a local drift to the right on empty sites,
and no drift to the right on sites occupied by particles.

An important parameter in our analysis will be
\begin{equation}\label{e:defq_0}
q_0 := P(S^{0,1}_1 = 0) \in [0,1).
\end{equation}
When $q_0 > 0$ we say that the random walks $S^{z,i}$ are \emph{lazy}.

Surprisingly, the asymptotic behaviour of the random walker
may strongly depend on whether $q_0=0$ or $q_0 > 0$.
Indeed, for small values of $p_\bullet$,
the random walker may develop a positive speed if $q_0>0$ and a negative one if $q_0=0$.
This is related to a notion of \emph{permeability}:
if $p_\bullet=q_0=0$, the random walker cannot cross any particles that it meets to the right,
and we say that the system is \emph{impermeable} to the random walker.
If either $p_\bullet$ or $q_0$ are positive, it is possible for the walker to cross particles in both directions,
and we call the system \emph{permeable}.

Let $\P^\rho$ denote the joint law of $N$ and $X$ for a fixed density $\rho>0$.
In order to describe our results, we introduce the following condition:
\begin{definition}[Ballisticity condition]
\label{e:defBAL}
Fixed $\rho$, $p_\circ$, $p_\bullet$, $q_0$ and given $v_\star \neq 0$,
we say that the \emph{ballisticity condition with speed $v_\star$}
is satisfied if there exist $\gamma > 1$ and $c_1, c_2 \in (0,\infty)$ such that
\begin{equation}\label{e:BAL}
\P^{\rho} \Big(\exists \, n \in \N \colon \frac{v_\star}{|v_\star|} X_n < |v_\star| n - L \Big) \le c_1 \exp\left\{-c_2 (\log L)^\gamma \right\} \;\;\; \forall \; L \in \N.
\end{equation}
\end{definition}
\noindent
Condition~\eqref{e:BAL} is reminiscent of ballisticity conditions from the literature of random walks in static random environments
such as Sznitman's $(T')$ condition (cf.\ \cite{Sz02}).
Such a condition provides control on the backtracking probability of the random walker
that can be very useful in obtaining finer asymptotic results, see e.g.\ Theorem~\ref{t:limits_permeable} below.

Note that, if $\rho = 0$ (i.e., if no particles are present),
the random walker has a global drift $v_\circ$, which is positive under \eqref{e:assumpvel}.
Our first ballisticity result states that, in the permeable case,
perturbations around $\rho = 0$ still lead to a positive speed.
\begin{theorem}\label{t:ballisticity_permeable}
Assume \eqref{e:assumpvel} and $p_\bullet \vee q_0 > 0$.
There exist $\rho_\star = \rho_\star(p_\circ, p_\bullet, q_0) > 0$
and $v_\star = v_\star(p_\circ, p_\bullet, q_0) > 0$
such that, for any $\rho \le \rho_\star$,
\eqref{e:BAL} holds with $\gamma = 3/2$.
\end{theorem}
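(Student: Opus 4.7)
The plan is to set up a multi-scale renormalization scheme in the spirit of \cite{manyRW,HHSST14}, adapted to the low-density regime. Fix a target speed $v_\star := v_\circ/2$, a base scale $L_0$ and growing scales $L_{k+1} = \lfloor L_k^a \rfloor$ for some $a>1$ tuned so as to produce the exponent $\gamma=3/2$ in \eqref{e:BAL}. To each space-time box $B_k$ of size roughly $L_k \times L_k$ (with a small spatial excess to accommodate ingress of particles) I would attach a \emph{good event} $G_k$ asserting that every copy of the walker started at the left face of $B_k$ exits $B_k$ through its right face in time at most $L_k$, having displaced by at least $v_\star L_k$ and never backtracking inside $B_k$ by more than $L_k^{1/2}$. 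The goal is to show $\P^\rho(G_k^c) \le \exp\{-c(\log L_k)^{3/2}\}$ for all $k$; condition \eqref{e:BAL} then follows because any violation of \eqref{e:BAL} at some $n$ forces $G_k^c$ to occur at a scale $L_k$ comparable to $L$ along the trajectory, after a union bound over translates.

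\textbf{Base case.} At the initial scale $L_0$, the key input is low density. With probability at least $1 - C\rho L_0^2$, only $O(\rho L_0^2)$ distinct particle trajectories visit the space-time box containing $B_0$. On this event, the walker behaves like an unperturbed biased walk with drift $v_\circ$ to which a controlled number of local delays is added. The permeability assumption $p_\bullet \vee q_0 > 0$ enters precisely here: when $p_\bullet > 0$ the walker crosses any particle with positive probability per step; when $p_\bullet = 0$ but $q_0 > 0$, lazy particles allow the walker to slip past after $O(1)$ attempts. In either case each encounter produces only a geometrically distributed delay with exponential moments. Combined with a standard large-deviation estimate on the biased-walk component, this yields $\P^\rho(G_0^c) \le \exp\{-cL_0\}$ provided $\rho \le \rho_\star$ is small enough, which is comfortably stronger than the target rate and suffices to start the induction.

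\textbf{Inductive step.} For the recursion, a failure of $G_{k+1}$ forces at least two disjoint failures of $G_k$ along the walker's trajectory inside $B_{k+1}$, giving heuristically
\[
\P^\rho(G_{k+1}^c) \;\lesssim\; L_k^{C}\,\P^\rho(G_k^c)^2.
\]
With $L_{k+1} = L_k^a$ for an appropriate $a$ slightly less than $2^{2/3}$, iterating this recursion from the base estimate produces exactly $\P^\rho(G_k^c) \le \exp\{-c(\log L_k)^{3/2}\}$. The nontrivial content of the step is the approximate independence of the two $G_k$-failures: since the particles are diffusive SSRWs, their trajectories can couple regions of space-time separated by distances $\lesssim \sqrt{L_{k+1}}$. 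I would handle this by a decoupling inequality for the Poisson tube along the lines developed in \cite{HHSST14}, using a soft-local-time or flow-based argument, possibly combined with a small sprinkling of the density to restore near-independence.

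\textbf{Main obstacle.} The principal difficulty lies in the decoupling in the induction step. At low density, each individual particle has relatively large local impact, so a crude union bound over particle trajectories is too wasteful; one must extract quantitative near-independence of the environment between well-separated space-time strips despite the diffusive spread of SSRW. The tradeoff between the polynomial loss in this decoupling and the ``doubling'' gained from requiring two independent $G_k^c$ events fixes both the scale exponent $a$ and the stretched-logarithmic rate $\gamma=3/2$ in \eqref{e:BAL}. A delicate point is to keep the threshold $\rho_\star$ fixed throughout the induction — i.e.\ to avoid having to shrink $\rho$ at each scale — which I expect requires a careful choice of the good events together with a small total budget for sprinkling across all scales.
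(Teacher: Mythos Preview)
Your high-level architecture --- multi-scale renormalization fed by a permeability-based input at the bottom --- matches the paper's, but the emphasis is misplaced and the base case is where the real gap lies.

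In the paper the renormalization is \emph{not} the obstacle: it is outsourced wholesale to \cite[Corollary~3.11]{manyRW}. One checks that the relevant bad events are monotone (Remark~\ref{r:monotone}) and local, and that corollary then converts a single a~priori estimate (the \emph{triggering theorem}, Theorem~\ref{t:triggerperm}) into \eqref{e:BAL} with $\gamma=3/2$, handling decoupling and sprinkling internally with a summable density budget. So what you flag as the ``main obstacle'' is already packaged; the new work is entirely in proving
\[
\P^{L^{-1/16}}\bigl(X_L < L^{15/16}\bigr) \le c\,e^{-c^{-1}(\log L)^2},
\]
a single-scale estimate at a density that \emph{decreases with the scale} --- not at a fixed small $\rho_\star$ as in your base case.

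Your base case is the gap. The assertion that ``each encounter produces only a geometrically distributed delay with exponential moments'' is not justified: when $p_\bullet=0$ and $q_0>0$, a particle can shadow the walker (the walker bounces left, the particle may stay or follow), and several nearby particles interact with the walker in a non-product way. The paper's mechanism is different and more delicate. It couples $X$ to a \emph{ghost} random walk $\bar X$ with drift $v_\circ$ and proves by induction on the particle count (Lemma~\ref{l:permeability}) that
\[
\P_\eta\bigl(X_s=\bar X_s \ \forall\, s\ge 0,\ \bar X_s \ge \tfrac12 v_\circ s \ \forall\, s\ge 0\bigr) \ge p_*^{|\eta|};
\]
the hypothesis $p_\bullet \vee q_0>0$ enters exactly in the induction step, to send one particle out of the relevant cone with positive probability. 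At density $L^{-1/16}$ the number of particles in the relevant box is $O(\log L)$ with very high probability, so the ghost-matching probability is only polynomially small in $L$; polynomially many temporally disjoint attempts (Lemma~\ref{l:crossfinitetraps}) then give the $(\log L)^2$ exponent. This ``cost $p_*^{|\eta|}$ per attempt, $|\eta|=O(\log L)$, $L^{\alpha-\beta}$ attempts'' structure is the heart of the argument and is absent from your sketch. Note also that a fixed-$L_0$ base estimate $\P^\rho(G_0^c)\le e^{-cL_0}$ obtained merely by making $\rho_\star$ small enough that no particles are present would not distinguish the permeable from the impermeable case, in tension with Theorem~\ref{t:ballisticity_impermeable}; permeability must do real work in the triggering, and that work is Lemma~\ref{l:permeability}.
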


Our second ballisticity result shows a radically distinct behaviour
for perturbations of $p_\bullet$ around the impermeable case.
\begin{theorem}\label{t:ballisticity_impermeable}
Assume $q_0=0$.
For any $p_\circ \in [0,1]$, $\rho >0$ and $\gamma \in (1,3/2)$,
there exist $v_\star = v_\star(\rho) < 0$
and $p_\star =p_\star(p_\circ,\rho, \gamma) \in (0,1)$
such that, if $p_\bullet \le p_\star$, then \eqref{e:BAL} holds.
\end{theorem}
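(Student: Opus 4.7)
The plan is to combine a local analysis at a seed scale with a multi-scale renormalization propagating the resulting control to all larger scales, in the spirit of the arguments of \cite{manyRW,HHSST14}. The mechanism producing a strictly negative speed in the impermeable regime $q_0=0$, $p_\bullet=0$ is the following: once the walker shares a site with a particle it is deterministically pushed to the left, and because $q_0=0$ the particle is forced to move as well, so with probability $1/2$ it accompanies the walker leftward and the pair performs a joint leftward step. A typical contact therefore lasts a geometric time and produces a linear-in-time leftward push, which accumulates across the contacts occurring in a time window of length $L$, and in particular yields a limiting speed $v_\star = v_\star(\rho) < 0$ that does not depend on $p_\circ$.

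Step 1 (seed estimate). For $p_\bullet = 0$ I would first fix $v_\star = v_\star(\rho) < 0$ and show that at a suitably chosen scale $L_0 = L_0(p_\circ,\rho,\gamma)$ one has
\[
\P^{\rho}\!\left( X_{L_0} \le v_\star L_0 \right) \;\ge\; 1 - \epsilon_0,
\]
where $\epsilon_0 > 0$ is a small constant chosen in terms of $\gamma$ in order that the iteration in Step 2 can close. The inputs are a law-of-large-numbers type estimate for the walker's empirical drift built from the per-contact computation above, combined with a Poissonian control on the number of contacts inside $[0,L_0]$, and a careful treatment of multi-particle encounters so that the single-particle estimate is not spoilt by nearby particles. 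The extension to $0 < p_\bullet \le p_\star(p_\circ,\rho,\gamma)$ is a stochastic-domination argument: a small $p_\bullet$ provides the walker with an escape probability of at most $p_\bullet$ per step spent on a particle, which can only slightly diminish the leftward drift.

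Step 2 (multi-scale iteration). Given the seed bound I would introduce a sequence of scales $L_k$ growing super-polynomially and associate to each scale a \emph{good box} event, asserting that any walker started from an appropriate spatial slab achieves leftward displacement at least $|v_\star| L_k$ during a time interval of length $L_k$. Writing $p_k$ for the probability of the complementary \emph{bad box} at scale $L_k$, the renormalization aims at a recursion of the form
\[
p_{k+1} \;\le\; C\, L_k^{\beta}\, p_k^{2} \;+\; \mathrm{err}(L_k),
\]
where the quadratic term reflects the fact that a bad box at scale $L_{k+1}$ forces two essentially disjoint bad sub-boxes at scale $L_k$ to occur along the walker's trajectory, and $\mathrm{err}(L_k)$ quantifies the failure of independence between the two sub-boxes once the SSRW environment is decoupled across them. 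Using the decoupling inequalities for independent SSRWs from \cite{manyRW,HHSST14} (the same ones driving Theorem~\ref{t:ballisticity_permeable}), the recursion can be iterated to yield $p_k \le \exp\{-c_2 (\log L_k)^\gamma\}$ for any prescribed $\gamma < 3/2$, provided $\epsilon_0$ is small enough in terms of $\gamma$; a union bound over scales finally produces \eqref{e:BAL}.

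The main obstacle will be Step 1. In contrast to Theorem~\ref{t:ballisticity_permeable}, where the walker has an intrinsic drift $v_\circ > 0$ that survives low-density perturbations, here the negative drift has to be extracted from the coupled walker--particle dynamics itself, and one must therefore quantitatively control multi-particle interactions (particles merging, crossing, or catching up with the walker) in order to prevent the per-contact leftward push from being cancelled. The slightly weaker exponent $\gamma < 3/2$, as opposed to the critical value $3/2$ obtained in the permeable regime, reflects the fact that the walker's progress is paced by the diffusive motion of the SSRW particles it stays in contact with, so the renormalization in Step 2 loses a polylogarithmic factor compared with the permeable setting.
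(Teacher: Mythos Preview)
Your overall architecture --- a seed estimate at a fixed scale followed by the multi-scale renormalization of \cite{manyRW} --- matches the paper's. The substantive difference is in how the seed estimate (Step~1) is obtained, and this is precisely where your proposal is weakest.

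You propose to extract the negative drift by a direct analysis of walker--particle contacts: each contact lasts a geometric time, contributes a leftward push, and one sums over contacts using Poissonian control. You correctly identify the main obstacle: multi-particle effects (several particles near the walker, particles catching up, parity issues) can spoil the single-contact picture, and you offer no concrete mechanism to control them. This is not a small technicality --- it is exactly the hard part, and your sketch does not close it.

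The paper sidesteps this difficulty entirely. For $p_\bullet=0$ and $q_0=0$ it couples $X$ to the \emph{front of an infection process}: particles on even sites to the right of the origin are declared infected, infection spreads upon contact, and $\bar X_n$ is the position of the leftmost infected particle. A two-line parity argument (Lemma~4.12 in the paper) gives $X_n \le \bar X_n$ for all $n$. The behaviour of $\bar X$ is already known from \cite{manyRW} (Proposition~1.2 there, restated as Proposition~4.13), yielding the triggering bound $\P^{\hat\rho}(\bar X_{\hat L} > \hat v \hat L) \le c\exp\{-c^{-1}(\log \hat L)^{3/2}\}$ with some $\hat v<0$. The extension to small $p_\bullet>0$ is then just continuity in $p_\bullet$ of $\P^{\hat\rho}(X_{\hat L} > \hat v \hat L)$, which holds because $X_{\hat L}$ depends only on a finite space-time box. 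No per-contact bookkeeping or multi-particle control is needed.

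Two further remarks. First, your explanation for the restriction $\gamma<3/2$ (that the walker is ``paced by the diffusive motion'' of particles) is not the actual reason: the seed estimate in the paper already comes with exponent $3/2$, and the loss to $\gamma<3/2$ is a technical artefact of having to tune the auxiliary density $\hat\rho$ in Corollary~3.11 of \cite{manyRW} so that, after the renormalization increases the density, one lands at the prescribed $\rho$. Second, your stochastic-domination argument for $p_\bullet>0$ would require showing that increasing $p_\bullet$ makes $X$ stochastically larger, which is not obvious and is not what the paper uses; the continuity argument is simpler and sufficient because the seed estimate is needed only at a single fixed scale $\hat L$.
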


Theorem~\ref{t:ballisticity_impermeable} may be seen as a manifestation of particle conservation in our dynamic random environment.
Indeed, when $q_0=0$, this conservation forces the random walker to interact with environment particles that it crosses;
see Section~\ref{ss:trigger_imperm}.

The difference in the ballistic behaviour of the two cases is illustrated
by the phase diagrams in Figure~\ref{f:phase_diagram}.

\begin{figure}[h]
  \centering
  \begin{tikzpicture}[scale=.7]
    \draw[fill, color=gray!30!white] (2,2) .. controls (1.5,1.5) and (1,1) .. (0,.8) -- (0,0) -- (2,0) -- (2,2);
    \draw[fill, color=gray!30!white] (12,1.7) .. controls (11,1.4) and (10.5,1.2) .. (10,0) -- (10,2.2) -- (12,2.2) -- (12,1.7);
    \draw[<->] (0,4) -- (0,0) -- (5, 0);
    \draw[<->] (10,4) -- (10,0) -- (15, 0);
    \draw (4,-.2) -- (4,.2) (14,-.2) -- (14,.2);
    \node[right] at (0,5) {Lazy environment};
    \node[right] at (9.5,5) {Non-lazy environment};
    \node[below] at (0,-.2) {$0$};
    \node[below] at (10,-.2) {$0$};
    \node[below] at (4,-.2) {$1$};
    \node[below] at (14,-.2) {$1$};
    \node[below] at (5,-.2) {$p_\bullet$};
    \node[below] at (15,-.2) {$p_\bullet$};
    \node[below] at (-.4,4) {$\rho$};
    \node[below] at (9.6,4) {$\rho$};
    \draw (2,2) .. controls (1.5,1.5) and (1,1) .. (0,.8) -- (0,0);
    \draw (12,1.7) .. controls (11,1.4) and (10.5,1.2) .. (10,0) -- (10,2.2);
    \node[left] at (2, .5) {$v_\star > 0$};
    \node[left] at (11.9, 1.85) {$v_\star < 0$};
  \end{tikzpicture}
  \caption{Phase diagrams corresponding to lazy and non-lazy particles}
  \label{f:phase_diagram}
\end{figure}
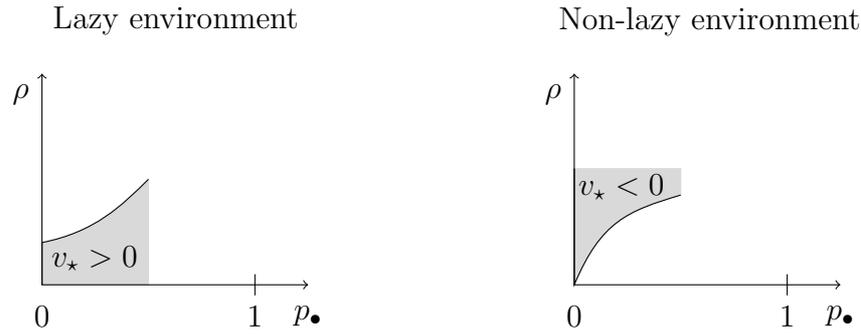

As already mentioned, the ballisticity condition \eqref{e:BAL}
can be used to study further asymptotic properties of the random walker.
The following theorem summarizes new results as well as previous results from \cite{HHSST14}.
\begin{theorem}\label{t:limits_permeable}
Fix $0 \le p_\bullet < p_\circ \le 1$, $\rho \ge 0$, $q_0 \in [0,1)$ and
assume that \eqref{e:BAL} holds with $v_\star \neq 0$.
Assume additionally that
\begin{equation}\label{e:ASSLIM1}
\text{a)} \qquad p_\bullet > 0 \quad \text{ if } \quad v_\star > 0
\end{equation} 
or
\begin{equation}\label{e:ASSLIM2}
\text{b)} \qquad p_\circ <1 \quad \text{ if } \quad v_\star < 0.
\end{equation} 
Then there exist $v = v(p_\circ,p_\bullet, q_0, \rho) \in \R$
and $\sigma = \sigma(p_\circ, p_\bullet, q_0, \rho) \in (0,\infty)$
satisfying $v v_\star >0$, $|v| \le |v_\star|$
and such that the following hold:
\begin{enumerate}
\item (Strong law of large numbers)
\begin{equation}\label{e:LLN_perm}
\lim_{n \to \infty} \frac{X_n}{n} = v \;\;\; \P^\rho \text{-a.s.}
\end{equation}
\item (Functional central limit theorem)
Under $\P^\rho$, the sequence of processes
\begin{equation}\label{e:FCLT_perm}
\left( \frac{X_{\lfloor n t\rfloor} - \lfloor n t \rfloor v}{\sigma \sqrt{n}}\right)_{t \ge 0}, \quad n \in \N,
\end{equation}
converges in distribution as $n \to \infty$ (with respect to the Skorohod topology) to a standard Brownian motion.
\item (Large deviation bounds)
For any $\varepsilon > 0$, there exist constants $c_1, c_2 > 0$ such that
\begin{equation}\label{e:LD_perm}
\P^{\rho} \left( \left|\frac{X_n}{n} - v \right| > \varepsilon \right) \le c_1 e^{-c_2 (\log n)^{\gamma}} \;\;\; \forall \; n \in \N.
\end{equation}
\end{enumerate}
\end{theorem}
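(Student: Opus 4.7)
The plan is to implement a regeneration-time construction in the spirit of \cite{HHSST14}, adapted to the present low-density/permeable setting. By symmetry I treat only the case $v_\star>0$ (the case $v_\star<0$ is analogous, using \eqref{e:ASSLIM2} in place of \eqref{e:ASSLIM1}). Observe that \eqref{e:ASSLIM1} together with $p_\bullet<p_\circ$ gives a uniform positive lower bound on the probability that the walker steps right regardless of the current environment, which serves as an ellipticity input.

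\emph{Regeneration construction.} Fix $\tilde v\in(0,v_\star)$. I call a time $\tau$ a regeneration time when: (i) $X_\tau$ is a new running maximum; (ii) $X_s\ge X_\tau$ for every $s\ge\tau$; (iii) the walker meets a prescribed sequence of space-time checkpoints lying in the forward cone $\{(s,y):\,s\ge\tau,\;y\ge X_\tau+\tilde v(s-\tau)\}$; and (iv) no environment particle located strictly to the left of $X_\tau$ at time $\tau$ ever coincides with the walker after time $\tau$. Property (iv) is what decouples the future of the walker from its past despite the slowly mixing, non-Markovian environment.

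\emph{Existence and i.i.d.\ increments.} I would show that $\tau_1<\infty$ almost surely by the following local argument: at each new maximum, ellipticity forces the walker straight right over an interval of length $\ell$ with positive probability; \eqref{e:BAL} guarantees that after such an interval the walker never backtracks past the new maximum, with probability at least $1-c_1e^{-c_2(\log\ell)^\gamma}$; and the diffusive spread of the environment walks makes the probability that a particle from the past catches the walker summable in $\ell$. A Borel--Cantelli argument over successive running maxima then yields $\tau_1<\infty$ a.s. A standard strong Markov argument at the successive $\tau_k$ produces an i.i.d.\ sequence $(\tau_{k+1}-\tau_k,\,X_{\tau_{k+1}}-X_{\tau_k})_{k\ge 1}$, independent of $(\tau_1,X_{\tau_1})$, and the same ingredients produce the tail estimate
\begin{equation}
\P^\rho(\tau_2-\tau_1>n)\le c_1\exp\{-c_2(\log n)^\gamma\},
\end{equation}
together with the same bound on $X_{\tau_2}-X_{\tau_1}$; in particular, all moments of the regeneration increments are finite.

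\emph{Conclusions and main obstacle.} Setting $v:=\E[X_{\tau_2}-X_{\tau_1}]/\E[\tau_2-\tau_1]$ and $\sigma^2:=\Var(X_{\tau_2}-X_{\tau_1}-v(\tau_2-\tau_1))/\E[\tau_2-\tau_1]$, part~(1) follows from the classical LLN applied to the regeneration sums combined with $\tau_k\to\infty$ and the fact that the walker's displacement inside a block is controlled by the block length; part~(2) follows from Donsker's invariance principle applied to those sums, with the interpolation between regeneration times controlled by the moment bounds above; and part~(3) follows because $\{|X_n/n-v|>\varepsilon\}$ forces either an atypically small number of regeneration times up to time $n$ or an atypically long single block, and both events are controlled by the stretched-exponential tail above, yielding the prefactor $(\log n)^\gamma$ in the exponent. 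The hard part is the regeneration construction itself: one must combine \eqref{e:BAL} with Gaussian-type fluctuation estimates for Poissonianly many independent random walks to show that conditioning on the cone event (iv) does not bias the law of the environment inside the forward cone, so that successive blocks are genuinely i.i.d.\ with the same distribution. A secondary technical point is transferring the stretched-exponential tail at the regeneration level to arbitrary times while preserving the exponent $\gamma$.
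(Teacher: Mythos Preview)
Your overall framework---regeneration times with stretched-exponential tails, then LLN/FCLT/LD via standard renewal arguments---matches the paper's. However, your decoupling condition (iv) is not the one that actually works, and the gap matters.

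The paper does not ask that ``no particle from the left at time $\tau$ ever meets the walker.'' Instead it partitions the set $W$ of doubly-infinite trajectories into three classes relative to two space-time cones $\um(y)$ and $\tres(y)$ based at $y=Y_{R_k}$: those hitting only the forward cone ($W^\um_y$), only the backward cone ($W^\tres_y$), and both ($W^\treze_y$). Because $\omega$ is a Poisson point process on $W$, the restrictions of $\omega$ to these three sets are \emph{independent}. The regeneration event is then $\{\omega(W^\treze_{Y_{R_k}})=0\}\cap A^{Y_{R_k}}$, where $A^y$ is the event that the walker started at $y$ never leaves $\um(y)$. On this event, the walker's future depends only on $\omega|_{W^\um}$ and on the $U$-variables in $\um$, which are automatically independent of the past $\sigma$-algebra $\mathcal{F}_y$. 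There is nothing to ``show'' about conditioning not biasing the law: the independence is structural. Your condition (iv), by contrast, couples the environment to the walker's future trajectory and also ignores trajectories that sit to the \emph{right} of $X_\tau$ at time $\tau$ but visited the past cone earlier; those still carry information about the past and would spoil the i.i.d.\ block structure.

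A second point: your record times are running maxima, whereas the paper's $R_k$ are first entrance times into slanted cones $\um_k$ with slope $\bar v=\tfrac13 v_\star$. This matters because the ``stay-in-cone'' event $A^y$ is exactly what \eqref{e:BAL} controls, and the tilt is what makes $\P(\omega(W^\treze_y)=0)>0$ (a symmetric random walk trajectory avoids both tilted cones with positive probability, but would cross any vertical line a.s.).

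Finally, your sketch of the tail bound is too optimistic. The paper's proof of $\E[e^{c(\log\tau)^\gamma}]<\infty$ (Theorem~\ref{t:tailregeneration}) requires a notion of \emph{good record time} built from four conditions \eqref{e:good_record1}--\eqref{e:good_record4}, including a \emph{local} influence field $h^T$ controlled along a whole parallelogram boundary, a forced run of length $T''\asymp\log T$ using $p_\bullet>0$, and---crucially for the case $v_\circ>v_\bullet$ treated here---an auxiliary process $\widetilde{Y}^k$ obtained by deleting certain trajectories so that the exit-through-the-right event becomes measurable with respect to the correct $\sigma$-algebra while still dominating $Y$ by monotonicity. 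Your Borel--Cantelli sketch does not produce the conditional lower bound $\P(R_k\text{ is good}\mid\mathcal{F}_{k-T'})\ge cT^{\delta\log p_\bullet}$ that drives the argument.
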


\vspace{5pt}

At this point, a few remarks are in order:

\vspace{5pt}
\noindent
{\bf 1.}
Theorems~\ref{t:ballisticity_permeable} and \ref{t:ballisticity_impermeable}
are proved with the help of a renormalization scheme taken from \cite{manyRW}.
In fact, given the setup developed therein, our problem is reduced to
proving two \emph{triggering theorems}, which are key a priori estimates
on the probability of certain undesired events (cf.\ Section~\ref{s:renorma}).
This step is here much more involved than in the high-density regime considered in \cite{manyRW, HHSST14}:
for Theorem~\ref{t:ballisticity_permeable}, it is proved through a careful
analysis of the behaviour of $X$ under decreasing densities and,
for Theorem~\ref{t:ballisticity_impermeable}, by comparison with the front of an infection model
(cf.\ Section~\ref{s:trigger}).

\vspace{5pt}
\noindent
{\bf 2.}
Theorem~\ref{t:limits_permeable} is proved via a regeneration argument as in \cite{HHSST14}.
Note that the assumption $p_\circ > p_\bullet$ implies no loss of generality.
The conditions on $p_\circ, p_\bullet$ in items $a)$ and $b)$ can be seen as \emph{ellipticity assumptions},
as they allow the random walk to take jumps in the direction of $v_\star$ independently of the environment.
Under $b)$, the conclusion already follows from \cite[Theorem~1.4]{HHSST14} (and reflection symmetry); 
in this case, the ellipticity condition can be in fact relaxed using techniques from the proof of \cite[Theorem~5.2]{manyRW}.
The proof of the theorem under $a)$ will be given in Section~\ref{s:reg_lowdensity} below.
The control of the regeneration time is here different,
as the asymmetry in law of occupied/empty sites in the random environment leads to different
monotonicity properties once the roles of $p_\circ$ and $p_\bullet$ are exchanged (cf.\ Section~\ref{ss:prooftail}).
We are presently unable to extend this analysis to the non-elliptic case, i.e., when $p_\bullet=0$.

\vspace{5pt}
\noindent
{\bf 3.}
Under the conditions of Theorems~\ref{t:ballisticity_permeable} and~\ref{t:limits_permeable},
it is possible to show that the speed $v$ in \eqref{e:LLN_perm} above
is a continuous function of $\rho$ in the interval $[0,\hat{\rho}]$ (cf.\ Remark~4.8 of \cite{HHSST14}).
In particular, for fixed $p_\bullet>0$,
$v \to v_\circ$ as $\rho \to 0$.
When $p_\bullet = 0$, we also expect that $v_\star$
in Theorem~\ref{t:ballisticity_permeable} may be taken arbitrarily close to $v_\circ$ by making $\hat{\rho}$ sufficiently small,
but we are currently not able to prove this.

\vspace{5pt}
\noindent
{\bf 4.}
Our results could be presumably extended to higher dimensions and more general transition kernels,
but extra work would be required.
The approach of \cite{manyRW} does not help here,
the problem being again the asymmetry between occupied/empty sites in the environment.
For $2$-state transition kernels,
the approach of \cite{BH14} could be possibly made to work, however several technical steps would need to be adapted.

\vspace{5pt}
\noindent
{\bf 5.}
A crossover from positive to negative speed of a RWDRE is also obtained in \cite{HS15},
where the random environment is a simple symmetric exclusion process.
The transition is observed when varying the jump speed of the exclusion particles.
We also mention \cite{ABF16a}, where very interesting symmetry properties of the speed are obtained
(in particular for the case where the environment is given by the East model).

\vspace{10pt}
The rest of the paper is organized as follows.
A short overview of the literature in our context is provided next
in Section~\ref{ss:connections}.
Technical statements start in Section~\ref{s:construction},
where we provide a convenient construction of our model.
Theorems~\ref{t:ballisticity_permeable}--\ref{t:ballisticity_impermeable}
are proved in Section~\ref{s:renorma} by application of a renormalization setup from \cite{manyRW};
the proof relies on two triggering theorems that are in turn proved in Section~\ref{s:trigger}.
Finally, in Section~\ref{s:reg_lowdensity} we prove Theorem~\ref{t:limits_permeable}
by means of a regeneration argument.


\subsection{Connections to the literature}
\label{ss:connections}

Models of random walks in random environments have been studied since many years.
The setup of the present paper fits in the context of RWDRE in interacting particle systems,
as introduced in \cite{AdHR10, AdHR11}.
One motivation for RWDRE in one dimension comes from the static version (i.e., where the environment is constant in time),
which is known to exhibit, in some regimes, anomalous behaviour such as transience with zero speed \cite{Solomon75} and non-diffusive scalings \cite{KKS75}, in sharp contrast to usual homogeneous random walks. These phenomena are related to \emph{trapping effects},
whereby regions of the lattice with atypical environment configurations tend to hold the random walker for abnormally large times.
Since in the dynamic case the trapping regions may disappear, the question is raised of whether the phenomena remain.
This question is up to now only partially answered in the literature, mostly by identifying regimes with no anomalous behaviour.
For example, \cite{ABF16b, AdHR11, BV16, dHdSS13, RV13} identify general conditions under which laws of large numbers and central limit theorems hold, and \cite{ABF16a, AdSV13, dHdS14, MV15, HS15} study particular examples.
We also mention the works \cite{AFJV15, AdHR10, AJV14, CDRRS13, OS16, dS14}, concerning other asymptotic results.
For further discussion, we refer the reader to \cite{manyRW, HHSST14} and the references therein.

\vspace{10pt}
\noindent
{\bf Acknowledgments.}
OB acknowledges the support of the French Ministry of Education
through the ANR 2010 BLAN 0108 01 grant.
MH was partially supported by CNPq grants 248718/2013-4 and 406659/2016-8 and by ERC AG "COMPASP".
RSdS was supported by the German DFG project KO 2205/13 ``Random mass flow through random potential''.
AT was supported by CNPq grants 306348/2012-8 and 478577/2012-5 and by FAPERJ grant 202.231/2015.
OB, MH and RSdS thank IMPA for hospitality and financial support.
AT and MH thank the CIB for hospitality and financial support.
RSdS thanks ICJ for hospitality and financial support.
The research leading to the present results benefited from
the financial support of the seventh Framework Program of the European Union (7ePC/2007-2013),
grant agreement n\textsuperscript{o}266638.
Part of this work was carried on while MH was on a sabbatical year on the University of Geneva. 
He thanks the mathematics department of this university for the financial support.
OB and AT thank the University of Geneva for hospitality and financial support.

\section{Construction}
\label{s:construction}
In this section, we provide a convenient construction of our random environment and our random walker by means
of a point process of trajectories as in \cite{HHSST14}.

Define the set of doubly-infinite trajectories
\begin{equation}
  \label{e:def_W}
  W = \Big\{ w:\Z \to \Z \colon\, |w(i + 1) - w(i)| \le 1 \;\; \forall \; i \in \Z \Big\}.
\end{equation}
Note that trajectories in $W$ are allowed to jump to the left, jump to the right, or stay put.
We endow the set $W$ with the $\sigma$-algebra $\mathcal{W}$ generated by the canonical coordinates $w \mapsto w(i)$, $i \in \Z$.

Let $(S^{z,i})_{z \in \Z, i \in \N}$ be a collection of independent random elements of $W$,
with each $S^{z,i} = (S^{z,i}_\ell)_{\ell \in \Z}$ distributed as
a double-sided simple symmetric random walk on $\Z$ started at $z$,
i.e., the past $(S^{z,i}_{-\ell})_{\ell \ge 0}$ and future $(S^{z,i}_{\ell})_{\ell \ge 0}$
are i.i.d.\ and distributed as a simple symmetric random walk satisfying \eqref{e:defq_0}.

For a subset $K \subset \Z^2$, denote by $W_K$ the set of trajectories in $W$ that intersect $K$,
i.e., $W_K := \{w \in W \colon\, \exists\, i \in \Z, (w(i),i) \in K\}$.
We define the space of point measures
\begin{equation}
  \label{e:Omega}
  \Omega = \Big\{ \omega = \sum_i \delta_{w_i};\; w_i \in W \text{ and } \omega(W_{\{y\}}) < \infty \text{ for every } y \in \Z^d \times \Z \Big\},
\end{equation}
endowed with the $\sigma$-algebra generated by the evaluation maps $\omega \mapsto \omega(W_K)$, $K \subset \Z^2$.

For a fixed initial configuration $\eta = (\eta(x))_{x \in \Z} \in \Z_+^{\Z}$,
we define the random element
\begin{equation}\label{e:defomega}
\omega := \sum_{z \in \Z} \sum_{i \le \eta(z)} \delta_{S^{z,i}} \in \Omega
\end{equation}
and, for $y \in \mathbb{Z}^2$, we set
\begin{equation}\label{e:defN}
N(y) := \omega(W_{\{y\}}).
\end{equation}

Let $U = (U_y)_{y \in Z}$ be i.i.d.\ Uniform$[0,1]$ random variables
independent of $\omega$.
We define the space-time processes $Y^y = (Y^y_n)_{n \in \Z_+}$, $y \in \Z^2$ by setting
\begin{equation}\label{e:defY}
\begin{aligned}
Y^y_0 & = y,\\
Y^y_{n+1} & = Y^y_n + \left\{
\begin{array}{ll}
\bigl(2 \mathbbm{1}_{\{U_{Y^y_n} \leq p_\circ\}} -1, 1 \bigr) & \text{ if } \;\; N(Y^y_n) = 0,\\
\bigl(2 \mathbbm{1}_{\{U_{Y^y_n} \leq p_\bullet\}} -1,1 \bigr) & \text{ if } \;\; N(Y^y_n) \ge 1,
\end{array}\right.
\quad n \in \Z_+.
\end{aligned}
\end{equation}
For $y = (x,t) \in \Z^2$,
we define the random walkers $X^y = (X^y_n)_{n \in \Z_+}$
by the relation $Y^y_n = (X^y_n, n+t)$, i.e., $X^y_n$ is the spatial projection of $Y^y_n$.
Writing $X = X^0$, one may check that the pair $(N,X)$ has indeed the distribution described in Section \ref{s:intro}.

For $\eta \in \Z_+^{\Z}$ fixed, we denote by $\P_\eta$ the joint law of $\omega$ and $U = (U_y)_{y \in \Z^d \times \Z}$.
For $\rho > 0$, denote by $\nu_\rho$ the product Poisson($\rho$) law on $\Z_+^{\Z}$.
We write $\P^{\rho} = \int \P_\eta \nu_\rho (d \eta)$, i.e., $\P^\rho$ is the joint law of $\omega$ and $U$
when $\eta$ is distributed as $\nu_\rho$.
Our configuration space will be taken as $\overline{\Omega} := \Omega \times [0,1]^{\Z^d \times \Z}$,
equipped with the product $\sigma$-algebra.

An important observation is that, under $\P^\rho$,
$\omega$ is a Poisson point process on $\Omega$
with intensity measure $\rho \mu$, where
\begin{equation}
\label{e:defmu}
  \mu = \sum_{z \in \Z^d} P_z
\end{equation}
and $P_z$ is the law of $S + z$ as an element of $W$.
Note that, under $\P^\rho$, the law of $(\omega, U)$ is invariant with respect to space-time translations;
in particular, the law of $Y^y-y$ does not depend on $y$.

We will need the following definition.
\begin{definition}
\label{d:monotone}
For $\omega,\omega' \in \Omega$, we say that $\omega \leq \omega'$ when $\omega(A) \le \omega'(A)$
for all $A \in \cW$.
We say that a random variable $f\colon\,\overline{\Omega}
\to \R$ is \emph{non-decreasing} when $f(\omega, \xi) \leq f(\omega', \xi)$ for all $\omega
\leq \omega'$ and all $\xi \in [0,1]^{\Z^2}$.
We say that $f$ is \emph{non-increasing} if $-f$ is non-decreasing.
We extend these definitions to events $A$ in $\sigma(\omega,U)$ by considering $f=\mathbbm{1}_A$.
Standard coupling arguments imply that
$\mathbb{E}^{\rho}(f) \leq \mathbb{E}^{\rho'}(f)$ for all non-increasing random variables
$f$ and all $\rho \leq \rho'$.
\end{definition}

\begin{remark}
\label{r:monotone}
The above construction provides two forms of monotonicity:\\
(i) Initial position: If $x \leq x'$ have the same parity (i.e., $x' - x \in 2 \mathbb{Z}$),
then
\begin{equation}
X^{(x,n)}_i \leq X^{(x',n)}_i \qquad \forall\, n \in \mathbb{Z}\,\,\forall\, i \in \mathbb{Z}_+.
\end{equation}
(ii) Environment: If $v_\circ \geq v_\bullet$, then $X^y_n$ is
non-increasing (in the sense of Definition~\ref{d:monotone}) for any $y \in \Z^2$, $n \in \mathbb{Z}_+$.
\end{remark}



\section{Renormalization: proof of Theorems~\ref{t:ballisticity_permeable}--\ref{t:ballisticity_impermeable}}
\label{s:renorma}
In this section, we apply the renormalization setup from Section~3 of \cite{manyRW}
to reduce the proof of our main results to the following two \emph{triggering statements}: 

\begin{theorem}\label{t:triggerperm}
Assume $p_\bullet \vee q_0>0$.
There exists $c = c(p_\circ, p_\bullet, q_0) > 0$ such that
\begin{equation}\label{e:triggerperm}
\P^{L^{-1/16}}(X_L <L^{15/16})\leq c\exp \left\{ -c^{-1}(\log L)^2 \right\} \;\;\; \forall\; L \in \N.
\end{equation}
\end{theorem}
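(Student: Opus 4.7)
My first step would be to isolate the effect of the particles on the walker's drift. Setting $T_\bullet := |\{0 \le t < L : N(X_t,t) \ge 1\}|$ and observing that the conditional mean of each step $X_{t+1}-X_t$ is either $v_\circ$ or $v_\bullet$, one obtains
\[
X_L = v_\circ L - (v_\circ - v_\bullet)\,T_\bullet + M_L,
\]
with $M_L$ a martingale of increments bounded by $2$. Azuma--Hoeffding gives $|M_L| \le \sqrt{L}\,\log L$ on an event of probability at least $1 - 2\exp(-c(\log L)^2)$, which is well within the target bound. Since $v_\circ > 0$ and $v_\circ - v_\bullet > 0$, it suffices to establish a comparable tail estimate on $\{T_\bullet > c_\star L\}$ for some $c_\star < v_\circ/(v_\circ - v_\bullet)$.

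\textbf{Particle-level decomposition and low-density input.} I would write $T_\bullet \le \sum_{P\in\omega}\tau(P)$, where $\tau(P) := |\{0 \le t < L : X_t = P_t\}|$ is the walker's colocation time with particle $P$, and control the sum via two independent estimates. For the \emph{number} of relevant particles, a priori $|X_t| \le L$, so only particles starting in a window of length $O(L)$ can ever meet the walker; by the Poisson structure of $\omega$ with intensity $\rho\mu$ and $\rho = L^{-1/16}$, this count has mean $O(L^{15/16})$ and Poisson tail concentration far better than $\exp(-c(\log L)^2)$.

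\textbf{Encounter-time estimate from permeability.} For the individual contributions $\tau(P)$, I would analyze the signed gap $G_t := X_t - P_t$ in the environment containing \emph{only} the particle $P$: on $\{G_t \ne 0\}$ its conditional mean increment is $v_\circ > 0$, while on $\{G_t = 0\}$ it equals $v_\bullet \le 0$. This is a Markov chain on $\Z$ which is transient to $+\infty$ provided the walker can leave $G=0$ with an eventual positive gap. When $p_\bullet > 0$, this happens in one step (direct leapfrog); when $p_\bullet = 0 < q_0$, it happens via a two-step mechanism using the particle's laziness (the particle stays put, the walker moves left, then drifts back past the particle). In either case the number of returns of $G$ to $0$ is geometrically distributed and $\P(\tau(P) \ge k) \le C e^{-ck}$, and the monotonicity of $X$ in the environment (Remark~\ref{r:monotone}(ii)) would let me transfer this single-particle tail to the full environment. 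A union bound over the $O(L^{15/16})$ relevant particles then gives $T_\bullet \le C L^{15/16}\log L \ll L$ on an event of the required probability, completing the reduction from the first paragraph.

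\textbf{Main obstacle.} I expect the hardest step to be the encounter estimate in the non-elliptic case $p_\bullet = 0 < q_0$: the escape mechanism is genuinely two-step and intrinsically probabilistic, and the monotonicity-based transfer from the single-particle analysis to the full random environment must be done carefully, since adding particles slows $X$ and could a priori prolong its colocation time with any given $P$.
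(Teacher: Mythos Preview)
Your decomposition $X_L = v_\circ L - (v_\circ - v_\bullet) T_\bullet + M_L$, the Azuma bound on $M_L$, and the Poisson control on the number of relevant particles are all correct, and your single-particle analysis of the gap process $G_t = X_t - P_t$ is sound. The genuine gap --- which you flag as the main obstacle but do not resolve --- is the transfer of the single-particle colocation tail to the full environment.

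Monotonicity in the sense of Remark~\ref{r:monotone}(ii) says that adding particles decreases $X_t$ pointwise, but $\tau(P) = |\{t < L : X_t = P_t\}|$ is \emph{not} a monotone functional of the trajectory of $X$. Concretely: particles lying to the right of $P$ can repeatedly push the walker back to the left, forcing it to re-encounter $P$ many more times than it would in the single-particle environment. With $k$ particles clustered together and $v_\bullet < 0$, the walker may take time exponential in $k$ to traverse the cluster (think of $k$ nearly-static particles forming a stretch of negative local drift), so the per-particle colocation times in the full environment are not dominated by their single-particle analogues. No rearrangement of the available monotonicity yields such a domination, and any repair would have to address the joint effect of the particles directly.

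The paper takes a route that confronts exactly this interaction. Rather than summing colocation times, it works first at a short time scale $L^\alpha$ (with $\alpha < 1/8$) on which only a spatial window of width $O(L^\beta)$ is relevant; the number of particles there is at most $\delta \log L$ with probability $1 - c e^{-c^{-1}(\log L)^2}$ (this Poisson tail is where the target bound actually originates). The key estimate, Lemma~\ref{l:permeability}, is proved by induction on the total particle number $k$ and shows that in an environment with $k$ particles the walker follows a ``ghost'' random walk of drift $v_\circ$ forever with probability at least $p_*^{\,k}$; for $k \le \delta \log L$ this is only a negative power of $L$. One then launches $\sim L^{\alpha-\beta}$ independent ghost walkers over the window $[0,L^\alpha]$ and uses a Markov-property iteration (Lemma~\ref{l:crossfinitetraps}, Proposition~\ref{p:crossingtraps}) to show that one of them succeeds with the required probability, pushing $X$ past $L^\beta$. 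A final union bound over $O(L^{1-\alpha})$ translates lifts this to scale $L$. The essential difference from your plan is that the permeability lemma treats all particles jointly from the outset and never tries to decouple their contributions.
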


\begin{theorem}\label{t:triggerimperm}
Assume $q_0 = 0$.
For any $\hat{\rho}>0$,
there exist $\hat{v} = \hat{v}(\hat{\rho}) < 0$
and $c >0$ such that the following holds.
For any $\hat{L} \in \N$,
there exists $p_\star = p_\star(\hat{\rho}, p_\circ, \hat{L}) \in (0,1)$
such that, if $p_\bullet \le p_\star$, then
\begin{equation}\label{e:triggerimperm}
\P^{\hat{\rho}} \left( X_{\hat{L}} > \hat{v} \hat{L} \right) \le c \exp \left\{- c^{-1} (\log \hat{L})^{3/2} \right\}.
\end{equation}
\end{theorem}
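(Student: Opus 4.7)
My strategy follows the hint from the introduction: reduce to the pure impermeable limit $p_\bullet = 0$, and then dominate the walker by the front of an infection-type process whose displacement can be controlled.

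First, since $p_\star$ is allowed to depend on $\hat L$, I would eliminate the rare ``rightward jumps on particles'' by a union bound. Along its first $\hat L$ steps, $X$ consults the Bernoulli($p_\bullet$) coin at most $\hat L$ times -- once per visit to a site containing a particle -- and all such consultations yield a jump to the left with probability at least $1 - p_\star \hat L$. On that event, $X$ coincides with the walker one would obtain by replacing $p_\bullet$ by $0$. Choosing $p_\star \le \tfrac{c}{2\hat L} \exp\{-c^{-1} (\log \hat L)^{3/2}\}$ reduces the problem to the pure impermeable case.

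Second, for $p_\bullet = q_0 = 0$, I would construct an auxiliary infection front $F_t$ that dominates $X_t$. Using the graphical construction of Section~\ref{s:construction}, call a trajectory $w \in \omega$ \emph{active} from the first time its spacetime graph coincides with that of $X$, i.e., $w(s) = X_s$ for some $s$. Let $F_t$ denote (roughly) the leftmost position attained by any active particle up to time $t$. The central claim to prove is the domination
\[
X_t \le F_t \qquad \forall\, t \in \Z_+,
\]
by induction on $t$. The intuition is that, because $p_\bullet = 0$, the walker cannot cross a particle without triggering a deterministic leftward jump, and because $q_0 = 0$ no particle can remain stationary: any rightward attempt by $X$ past the current front must activate a new particle, which in turn lowers $F_t$ by at least one site. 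The environment monotonicity of $X$ (Remark~\ref{r:monotone}) should be used to keep this coupling consistent across the induction.

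Third, I would analyse $F_t$ directly, using that under $\P^{\hat \rho}$ the measure $\omega$ is a Poisson point process of intensity $\hat\rho\,\mu$ (cf.~\eqref{e:defmu}) and that the trajectories $(S^{z,i})$ are independent. The positive density $\hat \rho$ ensures that the number of active particles grows linearly in $t$, and $F_t$ is bounded above by the running minimum of a growing collection of independent random walks. A moderate-deviation estimate -- tuned to balance the number of contributing trajectories against their fluctuations -- would then yield
\[
\P^{\hat \rho}(F_{\hat L} > \hat v \hat L) \le c \exp\bigl\{-c^{-1}(\log \hat L)^{3/2}\bigr\}
\]
for suitable $\hat v = \hat v(\hat \rho) < 0$ and $c > 0$ independent of $p_\circ$ and $\hat L$, completing the proof.

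The main obstacle is the second step: making the domination $X_t \le F_t$ fully rigorous. The walker can in principle engineer long rightward excursions through the cluster of active particles, and one must track these interactions carefully to guarantee that each excursion either stays within the cluster or activates a new trajectory that pulls the front back. A secondary difficulty is obtaining genuinely linear (and not merely sublinear) negative drift for $F_t$; this requires exploiting the positive Poisson intensity $\hat \rho$ in a self-reinforcing way, rather than only the independence of individual particle trajectories.
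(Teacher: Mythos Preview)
Your Step~1 is fine and essentially equivalent to the paper's argument; the paper simply invokes continuity of $p_\bullet\mapsto\P^{\hat\rho}(X_{\hat L}>\hat v\hat L)$ on the finite box $[-\hat L,\hat L]\times[0,\hat L]$, which your union bound makes quantitative.

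The genuine gaps are in Steps~2 and~3; you have identified both as obstacles but have not resolved them.

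\textbf{Step~2 fails as written.} Your front $F_t$---the running minimum of positions of particles touched by~$X$---does not dominate~$X_t$. Take a single particle started at~$2$: at time~$1$ walker and particle meet at~$1$ (walker $0\to1$, particle $2\to1$), so the particle is activated and $F_1=1$; the walker is forced left to~$0$, but the particle may then drift right ($1\to2\to3\to4$) while the walker, on empty sites, follows ($0\to1\to2$). At time~$4$ one has $X_4=2>1=F_4$. Replacing the running minimum by the \emph{current} leftmost active position does yield a valid domination, but that is not the object you defined.

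\textbf{Step~3 is circular and gives only sublinear drift.} Your activation rule makes $F_t$ depend on the trajectory of~$X$, so the assertion ``the number of active particles grows linearly in~$t$'' already presupposes ballistic behaviour of~$X$. Even granting it, the minimum of~$ct$ independent centred random walks run for time~$t$ is only of order $-\sqrt{t\log t}$, not~$-ct$; this is exactly the sublinearity you flagged, and independence of individual trajectories cannot overcome it.

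The paper sidesteps both issues by using an \emph{autonomous} infection: all particles starting on non-negative even sites are declared infected at time~$0$, and infection spreads from particle to particle whenever two share a site. The resulting front $\bar X_n$ (current leftmost infected particle) is defined without reference to~$X$, so there is no circularity. The domination $X_n\le\bar X_n$ becomes a two-line parity argument: both processes live on the same sublattice of~$\Z$, and at any coincidence time there is a particle at~$X_s$, forcing $X_{s+1}=X_s-1$. The linear negative speed of~$\bar X_n$ with the required stretched-exponential error is not proved from scratch but quoted from Proposition~1.2 of~\cite{manyRW}, where the self-reinforcing particle-to-particle spread is precisely what produces ballisticity.
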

The proof of Theorems~\ref{t:triggerperm}--\ref{t:triggerimperm} will be given in Section~\ref{s:trigger}.
Next we use \cite[Corollary~3.11]{manyRW} to show how these two theorems respectively imply Theorems~\ref{t:ballisticity_permeable} and \ref{t:ballisticity_impermeable}.

\begin{proof}[Proof of Theorem~\ref{t:ballisticity_permeable}]
Define a local function $g: \overline{\Omega} \to [-1,1]$ by setting
\begin{equation}\label{e:defg}
g (\omega, U) =  \begin{cases}
    1, & \quad \text{if $U_0 < p_\bullet$, or if $N(0) = 0$ and $U_0 < p_\circ$,}\\
    -1, & \quad \text{if $U_0 \geq p_\circ$ or if $N(0) > 0$ and $U_0 \geq p_\bullet$},
  \end{cases}
\end{equation}
i.e., the function $g$ returns the first step of the random walker $X^0$ for a given realization of $\omega, U$.
Then we define a function $H: \overline{\Omega} \times \mathbb{Z} \to \{0, 1\}$ by
\begin{equation}\label{e:defH}
H \big( (\omega, U), z \big) = 1_{\{g(\omega, U)=z\}}.
\end{equation}
In words, $H$ decides whether a jump $z$ is correct ($H=1$) or not ($H=0$) 
for a given realization of $\omega,U$ according to whether
the actual random walk $X^0$ would take $z$ as its first jump or not.
Recall now the definition of a $(0,L,H)$-crossing in the paragraph after equation (3.41) of \cite{manyRW},
and note that
\begin{display}\label{e:crossing}
$\sigma : [0, \infty) \cap \mathbb{Z} \to \mathbb{Z}$ is a $(0, L, H)$-crossing if and only if \\ $\sigma_t = X^y_t$ for every $t \in [0, L) \cap \Z$ and some $y \in \{0, \dots, L\} \times \{0\}$,
\end{display}
i.e., the only $(0,L,H)$-crossings are the trajectories of the RWDRE with initial position in $\{0, \ldots, L\}$.
Recall also the definition of averages along a crossing $\sigma$,
\begin{equation}
\chi^g_\sigma (\omega,U) := \frac{1}{L} \sum_{i=n}^{n + L - 1} g (\theta_{(\sigma(i), i)}(\omega,U)),
\end{equation}
to note the following correspondence between events: for any $L\in \N$, $\hat{v} >0$,
\begin{equation}\label{e:equalityevents}
  \Big\{ \exists \, (0, L, H) \text{-crossing } \sigma \, \colon\, \chi^g_\sigma \leq \hat{v} \Big\} =
  \Big\{ \exists \text{ $x \in \{0, \dots, L-1\} \colon$ $X^{(x,0)}_L - x \leq \hat{v} L$} \Big\}.
\end{equation}

Since, for $v_\star \in (0,1)$,
\begin{equation}\label{e:prthmbalperm1}
\mathbb{P}^{L^{-1/16}} \Big( \text{$ \exists \, n \geq 1 \colon\, X^0_n < v_\star n - L $} \Big)
\leq \mathbb{P}^{L^{-1/16}} \Big( \text{$ \exists \, n \geq L/2 \colon\, X^0_n \leq v_\star n$} \Big),
\end{equation}
we only need to bound the right-hand side for some $v_\star \in (0,1)$.
Now, by \eqref{e:equalityevents}, translation invariance and Theorem~\ref{t:triggerperm}, for all $\hat{L}$ large enough,
\begin{equation}
\begin{split}
\mathbb{P}^{\hat{L}^{-1/16}} & \Big( \exists \text{ a $(0, \hat{L}, H)$-crossing $\sigma$ with $\chi^g_\sigma \leq \hat{L}^{-1/16}$} \Big) \leq \hat{L} \mathbb{P}^{\hat{L}^{-1/16}} \Big( X^0_{\hat{L}} \leq \hat{L}^{15/16} \Big)\\
& \overset{\text{Theorem~\ref{t:triggerperm}}}\leq c  \hat{L} \exp \big\{ -c^{-1} (\log \hat{L})^2 \big\} < \exp(-(\log \hat{L})^{3/2}).
\end{split}
\end{equation}
Noting that the events in \eqref{e:equalityevents} are measurable in $\sigma(N(y), U_y \colon\, y \in B_{0,L})$ 
(where $B_{0,L}:= ([-2L, 3L) \times [0,L)) \cap \Z^2$),
and are non-decreasing by \eqref{e:assumpvel}, we verify the assumptions of Corollary~3.11 in \cite{manyRW}
(taking $v(L) = \rho(L) = L^{-15/16}$, and $\hat{L} = L_{\hat{k}}$ for some $\hat{k}$ large enough), obtaining
$v_\star \in (0,1)$, $\rho_\star > 0$ and $c >0$ such that, for all $\rho \le \rho_\star$,
\begin{equation}
\begin{split}
\mathbb{P}^{\rho}\Big( X^0_n \leq v_\star n \Big) & \leq \P^{\rho} \Big( \exists \text{ a $(0, n, H)$-crossing $\sigma$ with $\chi^g_\sigma \leq v_\star$} \Big)\\
 & \leq c^{-1} \exp \big( -c (\log n)^{3/2} \big)
\end{split}
\end{equation}
for all $n \in \Z_+$.
To conclude, sum over $n \geq L/2$ and apply the union bound to \eqref{e:prthmbalperm1}.
\end{proof}

\begin{proof}[Proof of Theorem~\ref{t:ballisticity_impermeable}]
This time, we define $g:\overline{\Omega} \to [-1,1]$ as
\begin{equation}
g (\omega, U) =  \begin{cases}
    -1, & \quad \text{if $U_0 < p_\bullet \wedge p_\circ$, or if $\omega(W_0) = 0$ and $U_0 < p_\circ$,}\\
    1, & \quad \text{otherwise}.
  \end{cases}
\end{equation}
For $y \in \Z^2$,
define a space-time process $\widetilde{Y}^y_t$, $t \in \Z_+$ by setting, analogously to \eqref{e:defY},
\begin{equation}\label{e:deftildeY}
\widetilde{Y}^y_0 = y \quad \text{ and } \quad \widetilde{Y}^y_{t+1} = \widetilde{Y}^y_t + (g(\theta_{\widetilde{Y}^y_t}(\omega, U)), 1), \;\;\; t \in \Z_+.
\end{equation}
Denote by $\widetilde{X}^y_t$ the first coordinate of $\widetilde{Y}^y_t$.
Note that, by invariance in law of $\omega$ under reflection through the origin, $\widetilde{X}^y$ has the same distribution as $-X^y$.
Setting $H:\overline{\Omega} \times \Z \to \{0,1\}$ as in \eqref{e:defH}, we analogously obtain \eqref{e:crossing}--\eqref{e:equalityevents} with $X$ substituted by $\widetilde{X}$.

Fix now $\gamma \in (1,3/2)$ and take $k_o$ as in Corollary~3.11 of \cite{manyRW}.
Fix $\rho>0$ and consider an auxiliary density $\hat{\rho}>0$, to be fixed later.
For this $\hat{\rho}$, let $\hat{v}<0$ as in Theorem~\ref{t:triggerimperm}; we may assume that $|\hat{v}| < 1$.
Fix $\hat{k} \ge k_o$, $p_\circ \in [0,1]$ and let $p_\star$ be as in Theorem~\ref{t:triggerimperm} for $\hat{L} = L_{\hat{k}}$.
Reasoning as in the proof of Theorem~\ref{t:ballisticity_permeable}, we see that, if $p_\bullet \le p_\star$, then
\begin{equation}
\begin{split}
\P^{\hat{\rho}} & \Big( \exists \text{ a $(0, \hat{L}, H)$-crossing $\sigma$ with $\chi^g_\sigma \leq |\hat{v}|$} \Big) \leq \hat{L} \P^{\hat{\rho}} \Big( X^0_{\hat{L}} \geq \hat{L} \hat{v} \Big)\\
& \overset{\text{Theorem~\ref{t:triggerimperm}}}\leq c \hat{L} \exp \big\{ -c^{-1} (\log \hat{L})^{3/2} \big\} < \exp(-(\log \hat{L})^{\gamma})
\end{split}
\end{equation}
whenever $\hat{k}$ (and thus $\hat{L}$) is large enough.
The events in \eqref{e:equalityevents} (with $X$ replaced by $\widetilde{X}$) are again measurable in $\sigma(N(y), U_y \colon y \in B_{0,L})$,
and are either always non-decreasing, or always non-increasing (depending on whether $p_\circ \ge p_\bullet$ or not).
Applying \cite[Corollary~3.11]{manyRW} (with $v(L) = |\hat{v}|$, $\rho(L) = \hat{\rho}$) we obtain $\rho_\infty, c>0$ depending on $\hat{\rho}$ such that
\begin{equation}\label{e:prbalimperm}
\begin{split}
\mathbb{P}^{\rho_\infty}\Big( X^0_n \geq \hat{v} n \Big) \leq c^{-1} \exp \big( -c (\log n)^{\gamma} \big)
\end{split}
\end{equation}
for all $n \in \Z_+$.
Now we note that, using the explicit expression for $\rho_\infty$
mentioned in the proof of \cite[Corollary~3.11]{manyRW}, 
we may choose $\hat{\rho}$ in such a way that \eqref{e:prbalimperm} is still valid with $\rho$ in place of $\rho_\infty$.
To conclude, sum \eqref{e:prbalimperm} over $n \ge L/2$ and use 
$\{\exists\, n \ge 1 \colon X^0_n > \hat{v}n + L \} \subset \{ \exists\, n \ge L/2 \colon X^0_n \geq \hat{v} n\}$ together with a union bound.
\end{proof}



\section{Triggering: proof of Theorems~\ref{t:triggerperm}--\ref{t:triggerimperm}}
\label{s:trigger}
Here we give the proofs of Theorem~\ref{t:triggerperm} (Section~\ref{ss:trigger_perm})
and Theorem~\ref{t:triggerimperm} (Section~\ref{ss:trigger_imperm}).
\subsection{Permeable systems at low density}
\label{ss:trigger_perm}
Throughout this section, we assume $p_\bullet \vee q_0 > 0$ (and $v_\circ > 0 \ge v_\bullet)$.
As mentioned in the introduction, 
we call this case permeable since the random walker is able to cross over particles of the environment.
The usefulness of this condition comes from the fact that $X$ may be coupled
with an independent homogeneous random walk $\bar{X}$ with drift $v_\circ$ (which we call a ``ghost walker'')
such that, whenever the initial configuration $\eta$ consists of at most one particle that is not at the origin,
there is a positive probability that $X_n = \bar{X}_n$ for all $n \in \Z_+$.
In fact, we will show that this probability decays at most exponentially in the number of particles of the environment.
This suggests the following strategy:
whenever a ``ghost walker'' is started to the left of $X$, 
it can ``push'' $X$ to the right.
This may happen with small probability but, if enough time is given, many trials are possible
and so there is a large probability that at least one of them succeeds.

In order to implement this idea, 
we work first in a time scale at which typical empty regions in the initial configuration remain empty, 
and the number of particles between such regions is relatively small. 
This ensures that $X$ does not move very far to the left,
and that the ``ghost walkers'' do not meet too many particles on their way.
The original scale is then reached via translation-invariance and a union bound.

We proceed to formalize the strategy outlined above.
In the following, we state two propositions which will then be used to prove Theorem~\ref{t:triggerperm}.
Their proofs are postponed to Sections~\ref{ss:proofproplowerestimate}--\ref{ss:proofpropcrossingtraps} below.

First of all we define the ghost walkers.
For $(x,t) \in \Z^2$, put
\begin{equation}\label{defbarXxt}
\begin{array}{lcl}
\bar{X}^{(x,t)}_0 & := & x,\\
\bar{X}^{(x,t)}_{s+1} & := & \bar{X}^{(x,t)}_{s} 
+ \left\{\begin{array}{ll} 1 & \text{ if } U_{(\bar{X}^{(x,t)}_s,s+t)} \le p_\circ,\\
-1 & \text{ otherwise.}
\end{array}\right.
\quad s  \in \Z_+.
\end{array}
\end{equation}
Then $\bar{X}^{(x,t)}$ is a simple random walk with drift $v_\circ$ started at $x$.
For $T \in [0,\infty]$, let
\begin{equation}\label{defGinfty}
G^{(x,t)}_{T} := \left\{X_s^{(x,t)}=\bar{X}_s^{(x,t)}\ \forall s\in [0, T]\right\}
\end{equation}
be the good event where the random walk $X^{(x,t)}$ follows $\bar{X}^{(x,t)}$ up to time $T$.
A comparison between $X$ and $\bar{X}^{(x,t)}$ on this event is given by the next lemma.
\begin{lemma}\label{l:compbarX}
Fix $(x,t) \in \Z^2$ with $x \in 2 \Z$.
If $X_t \ge x$ and $G^{(x,t)}_T$ occurs, then 
\[ X_{t+s} \ge \bar{X}^{(x,t)}_s \text{ for all } s \in [0, T].\]
\end{lemma}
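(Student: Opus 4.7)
The plan is to deduce the inequality from a combination of (a) a strong-Markov-like property built into the construction in Section~\ref{s:construction} and (b) the initial-position monotonicity in Remark~\ref{r:monotone}(i).

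First I would unfold the definition of $X$ via \eqref{e:defY}. Because the dynamics are driven by the fixed uniform variables $U$ and the fixed environment $N$, the process restarted from any space-time point $(y,\tau)$ is deterministically the function defined by \eqref{e:defY} with that initial condition. In particular, reading off $X = X^{(0,0)}$ at time $t$ and then continuing, one gets the identity
\begin{equation*}
X_{t+s} \;=\; X^{(X_t,\,t)}_s \qquad \forall\, s \in \Z_+,
\end{equation*}
which is just the ``quenched'' Markov property of our construction, and does not depend on any event occurring.

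Next I would invoke Remark~\ref{r:monotone}(i). By hypothesis $x \in 2\Z$, and the positions of $X$ at time $t$ have the same parity as $x$ (since $X$ starts at $0$ and takes $\pm 1$ steps; the intended use has $t$ even, which I expect is implicit from the context in which the lemma is applied). Thus $x \leq X_t$ with matching parity, and Remark~\ref{r:monotone}(i) gives
\begin{equation*}
X^{(x,\,t)}_s \;\leq\; X^{(X_t,\,t)}_s \qquad \forall\, s \in \Z_+.
\end{equation*}

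Finally I would use the defining property of the event $G^{(x,t)}_T$ in \eqref{defGinfty}: on this event, $X^{(x,t)}_s = \bar X^{(x,t)}_s$ for every $s \in [0,T]$. Chaining the three relations yields
\begin{equation*}
X_{t+s} \;=\; X^{(X_t,\,t)}_s \;\geq\; X^{(x,\,t)}_s \;=\; \bar X^{(x,\,t)}_s \qquad \forall\, s \in [0,T],
\end{equation*}
which is the claimed inequality.

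I do not expect any serious obstacle here; the lemma is essentially a bookkeeping statement. The only subtle point is the parity alignment needed to apply Remark~\ref{r:monotone}(i), which should be handled either by an implicit assumption that $t \in 2\Z$ (so $X_t$ and $x$ have matching parity) or by noting that in the applications of the lemma one only ever considers such $(x,t)$.
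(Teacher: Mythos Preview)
Your proposal is correct and follows exactly the route the paper takes: its proof is the single line ``Follows from Remark~\ref{r:monotone}(i) and the definitions of $X$, $\bar{X}$, $G^{(x,t)}_T$,'' which is precisely the chain $X_{t+s}=X^{(X_t,t)}_s \ge X^{(x,t)}_s = \bar X^{(x,t)}_s$ you spelled out. Your remark about the parity hypothesis needed for Remark~\ref{r:monotone}(i) is apt and in fact more careful than the paper; in all applications one has $t=T_i \in 2\Z$ (see \eqref{e:defT}), so $X_t \in 2\Z$ and the parity alignment with $x \in 2\Z$ holds.
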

\begin{proof}
Follows from Remark~\ref{r:monotone}(i) and the definitions of $X$, $\bar{X}$, $G^{(x,t)}_T$.
\end{proof}

To set up the scales for our proof, we fix $\alpha, \beta, \beta' \in (0,1)$ satisfying
\begin{equation}\label{e:relationscales}
0 < \frac{\alpha}{2} < \beta' < \beta < \alpha < 2 \beta < \frac18
\end{equation}
and we let
\begin{eqnarray}
T_i &:=& i 2 \lfloor 2 v_\circ^{-1} L^\beta \rfloor, \;\; i \in [0, M_L] \cap \Z \;\;\;\; \text{ where } \;\; M_L := \frac14 v_\circ L^{\alpha-\beta},\label{e:defT}\\ 
\ell_L &:=& \lfloor L^{\beta'} \rfloor. \label{e:defellL}
\end{eqnarray}
We assume that $L$ is large enough so that $\ell_L, M_L \ge 1$.

If $p_\bullet = 0$, it is not possible to couple $X^{(x,t)}_1$ and $\bar{X}^{(x,t)}_1$
if there is a particle at $(x,t)$.
Thus, if we aim to control $G^{(x,t)}_T$, we should have $N(x,t)=0$.
To that end, define
\begin{equation}\label{e:defhatz}
\hat{Z} := \max \left\{ z < -2 \ell_L \colon\, N(x,0) = 0 \; \forall \; x \in \Z, |x - z| \le 2 \ell_L \right\}
\end{equation}
to be the center of the first interval of $4 \ell_L + 1$ empty sites to the left of the origin in the initial configuration.
Then set
\begin{equation}\label{e:defx-}
X_- := \left\{
\begin{array}{ll}
\hat{Z} - \ell_L & \text{ if } \hat{Z} - \ell_L \in 2 \Z, \\
\hat{Z} - \ell_L + 1 & \text{ otherwise.}
\end{array} \right.
\end{equation}
Note that $X_- \in 2 \Z$.

In order to use Lemma~\ref{l:compbarX}, 
we must control the probability that $X$ crosses $X_-$ before time $L^\alpha$.
This is the content of the following proposition,
whose proof relies on standard properties of simple random walks and Poisson random variables.
\begin{proposition}\label{p:lowerestimate}
There exist $c,\varepsilon > 0$ such that, for all large enough $L \in \N$,
\begin{equation}\label{e:lowerestimate}
\P^{L^{-\frac{1}{16}}} \left( \min_{0 \le s \le L^\alpha} X_s < X_- \right) \le c e^{-c^{-1} L^{\varepsilon}}.
\end{equation}
\end{proposition}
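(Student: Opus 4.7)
The main observation is that $X_-\le -3\ell_L$ deterministically: since $\hat Z<-2\ell_L$ forces $\hat Z\le-2\ell_L-1$, the definition of $X_-$ gives $X_-\le \hat Z-\ell_L+1\le -3\ell_L$. Therefore
\[\{\min_{0\le s\le L^\alpha}X_s<X_-\}\subseteq\{\min_{0\le s\le L^\alpha}X_s\le -3\ell_L\},\]
and it suffices to bound the probability of the right-hand event, which no longer involves the random threshold $X_-$.

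To that end, I would use the Doob decomposition of $X_s$ under the filtration $\mathcal{F}_s=\sigma(X_r,N(X_r,r),U_{(X_r,r)}:r\le s)$:
\[X_s=M_s+v_\circ s-(v_\circ-v_\bullet)K_s,\qquad K_s:=\#\{r<s:N(X_r,r)\ge 1\},\]
where $M_s$ is an $\mathcal{F}_s$-martingale with $|M_{r+1}-M_r|\le 2$. Since $v_\circ s\ge 0$, one has $\min_{s\le L^\alpha}X_s\ge \min_{s\le L^\alpha}M_s-(v_\circ-v_\bullet)K_{L^\alpha}$. It therefore suffices to establish the following two estimates, each with probability at least $1-c\exp(-c^{-1}L^\varepsilon)$: (A) $\min_{s\le L^\alpha}M_s\ge -\ell_L$, and (B) $K_{L^\alpha}\le \ell_L/(v_\circ-v_\bullet)$. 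On their intersection, $\min_{s\le L^\alpha}X_s\ge -2\ell_L>-3\ell_L$.

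Estimate (A) is a direct consequence of Azuma's maximal inequality, which yields $\mathbb{P}(\min_{s\le L^\alpha}M_s\le -\ell_L)\le c\exp(-\ell_L^2/(cL^\alpha))=c\exp(-L^{2\beta'-\alpha}/c)$; the exponent is a positive power of $L$ thanks to $2\beta'>\alpha$ from \eqref{e:relationscales}. Estimate (B) is the main obstacle, since $K_{L^\alpha}$ depends on the environment through the walker's own trajectory, so straightforward Poisson calculations do not apply. The strategy I would pursue is to majorize $K_{L^\alpha}$ by a purely environment-measurable quantity: the number $N_R$ of trajectories of the Poisson point process $\omega$ that meet the deterministic space-time box $R=[-L^\alpha,L^\alpha]\times[0,L^\alpha]$ (which contains the walker's trajectory by $|X_s|\le s$), together with a bound on the number of coincidences per trajectory. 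The count $N_R$ is Poisson-distributed with mean $\rho\mu(W_R)=O(L^{\alpha-1/16})$, so standard Poisson concentration delivers stretched-exponential tails; combined with a light-tailed bound on the per-trajectory contribution to $K_{L^\alpha}$ (a single simple random walk trajectory, because $X$ has positive drift at empty sites, sees the difference $w-X$ with systematic drift, making coincidences sparse), this produces $K_{L^\alpha}\le CL^{\alpha-1/16}(\log L)^c$ with the required probability. The chain of inequalities $\alpha-1/16<\alpha/2<\beta'$, valid because $\alpha<1/8$, then ensures this upper bound is comfortably below $\ell_L=\lfloor L^{\beta'}\rfloor$, closing the plan.
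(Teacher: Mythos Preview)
Your approach is genuinely different from the paper's, and estimate~(B) contains a real gap.

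The crux of~(B) is the claim that the difference $w-X$ between a single environment particle and the walker has ``systematic drift, making coincidences sparse''. This is not justified, and the sign is wrong precisely where it matters: at a coincidence time $s$ (i.e.\ $X_s=w_s$) one has $N(X_s,s)\ge 1$, so the conditional drift of $X$ equals $v_\bullet\le 0$, and hence the drift of $w-X$ is $-v_\bullet\ge 0$. Coincidences are therefore \emph{not} repelling. More seriously, the per-trajectory contribution to $K_{L^\alpha}$ does not decouple: whether $X$ sits on an occupied site at time $s$ --- and hence the drift of $w-X$ at time $s$ --- depends on \emph{all} particles simultaneously, not only on $w$. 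When several particles cluster together, the walker may be held on occupied sites for a long stretch and collide repeatedly with each of them; a bound of the form ``$O((\log L)^c)$ collisions per particle'' does not follow from any standard random-walk estimate. Turning this sketch into a proof would require a quantitative permeability statement of strength comparable to Lemma~\ref{l:permeability}, which is exactly the hard analytic input you are trying to bypass.

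The paper sidesteps any bound on $K_{L^\alpha}$ by exploiting the \emph{random} threshold $X_-$, whose structure you threw away in your first reduction. By construction, $X_-$ sits inside the interval $[\hat Z-\ell_L,\hat Z+\ell_L]$, which at time~$0$ is surrounded by an empty buffer of width $\ell_L$. Since $2\beta'>\alpha$, particles from outside cannot penetrate this buffer before time $L^\alpha$ except with stretched-exponentially small probability (Lemma~\ref{l:noparticles}; this is also where Lemma~\ref{l:estimateE-} is used to localise $\hat Z$). On that event, every excursion of $X$ into $[\hat Z-\ell_L,\hat Z+\ell_L]$ is a homogeneous walk with drift $v_\circ>0$, and a gambler's-ruin estimate (Lemma~\ref{l:blockevent}) shows it does not reach $\hat Z-\ell_L+1\ge X_-$. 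No control on particle encounters is needed; the empty interval acts as a one-sided barrier, which is the whole point of defining $X_-$ via $\hat Z$ rather than as a deterministic level.
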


The next proposition shows that, with large probability, one of the $G^{(X_-, T_i)}_{T_1}$'s occurs. 
Its proof depends crucially on the permeability of the system.
\begin{proposition}\label{p:crossingtraps}
There exists $c>0$ such that, for all large enough $L \in \N$,
\begin{equation}\label{e:crossingtraps}
\P^{L^{-\frac{1}{16}}} \left( \bigcup_{i \in [0,M_L-1]} G^{(X_-, T_i)}_{T_1} \cap \{\bar{X}^{(X_-, T_i)}_{T_1} \ge L^\beta \}  \right) \ge 1 - ce^{-c^{-1}(\log L)^2}.
\end{equation}
\end{proposition}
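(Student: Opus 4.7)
The plan is to exploit conditional independence of the trials given the particle environment. Fix $\omega$ (and hence $X_-$). For each $i$, the ghost walker $\bar X^{(X_-, T_i)}$ and the indicator of $G^{(X_-, T_i)}_{T_1}$ depend only on the uniform variables $U_y$ with $y \in \mathbb{Z}\times[T_i, T_i+T_1)$; since $T_{i+1}-T_i = T_1$, these strips are pairwise disjoint, so conditionally on $\omega$ the $M_L$ events in the union on the left of \eqref{e:crossingtraps} are independent. The proposition thus reduces to proving that, on a high-probability event in $\omega$, each single-trial conditional success probability
\[
q_i(\omega) := \P^{L^{-1/16}}\bigl(G^{(X_-,T_i)}_{T_1}\cap\{\bar X^{(X_-,T_i)}_{T_1}\ge L^\beta\}\,\big|\,\omega\bigr)
\]
is bounded below by some $q_\star$ satisfying $M_L q_\star \gg (\log L)^2$.

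First I would introduce the good event $\mathcal{G} := \bigcap_i \{\omega(W_{B_i}) \le K_L\}$ with $B_i := [X_- - L^\beta, X_- + 6L^\beta] \times [T_i, T_i + T_1]$ and $K_L := \lfloor C\log L\rfloor$, for a constant $C$ to be chosen. The standard bound $\mu(W_{B_i}) \le |B_i^{\mathrm{sp}}| + O(\sqrt{T_1}) = O(L^\beta)$, combined with $\rho = L^{-1/16}$ and $\beta < 1/16$ (which follows from $2\beta < 1/8$), shows that $\omega(W_{B_i})$ is Poisson with mean $o(1)$. The Poisson tail together with a union bound over at most $L$ trials gives $\P^{L^{-1/16}}(\mathcal{G}^c) \le \exp(-c(\log L)^2)$ once $C$ is large enough.

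Next, on $\mathcal{G}$ I would lower-bound $q_i(\omega)$. Since $\bar X^{(X_-, T_i)}$ is a simple random walk with drift $v_\circ > 0$ started at $X_- \in [-2\ell_L, 0]$ and $\ell_L = L^{\beta'} \ll L^\beta$, Hoeffding gives $\bar X^{(X_-, T_i)}_{T_1} \ge L^\beta$ with probability at least $1/2$. Conditionally on this, the event $G^{(X_-, T_i)}_{T_1}$ fails only when at some step the ghost walker sits at a particle-occupied site and the corresponding uniform lies in the bad interval $(p_\bullet, p_\circ]$. In the typical case $p_\bullet > 0$, the per-encounter agreement probability is $1 - (p_\circ - p_\bullet) > 0$, and on $\mathcal{G}$ the total number of encounters along the ghost walker's trajectory is $O(K_L)$ with high probability: the difference between a fixed particle's position and the ghost walker is a biased random walk with drift $-v_\circ$, whose expected number of visits to $0$ is finite, so summing over the at most $K_L$ particles in $B_i$ gives an $O(K_L)$ bound on the meeting count. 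Combining these ingredients yields
\begin{equation*}
q_i(\omega) \;\ge\; \tfrac12\,\bigl(1-(p_\circ-p_\bullet)\bigr)^{O(K_L)} \;\ge\; L^{-c_1}
\end{equation*}
on $\mathcal{G}$, for some $c_1 > 0$ depending on $p_\circ,p_\bullet$ and $C$. Combining conditional independence with this bound,
\begin{equation*}
\P^{L^{-1/16}}\Bigl(\,\bigcap_{i}\bigl(G^{(X_-, T_i)}_{T_1}\cap\{\bar X^{(X_-, T_i)}_{T_1}\ge L^\beta\}\bigr)^c \,\Big|\, \omega \Bigr) \;\le\; (1-L^{-c_1})^{M_L} \;\le\; \exp\bigl(-L^{\alpha-\beta-c_1}\bigr)
\end{equation*}
on $\mathcal{G}$, which is much smaller than $\exp(-(\log L)^2)$ once the scale parameters are chosen so that $\alpha-\beta > c_1$. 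Adding the contribution of $\mathcal{G}^c$ yields \eqref{e:crossingtraps}.

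The main obstacle I expect is the delicate balance between the Poisson-tail requirement (which forces $K_L$ of order at least $\log L$) and the per-encounter degradation $(1-(p_\circ-p_\bullet))^{K_L}$ (which forces $K_L$ to be small), and especially the impermeable-looking regime $p_\bullet = 0$ with $p_\circ$ close to $1$ (so $q_0 > 0$ by hypothesis), where the per-encounter constant $1-p_\circ$ degenerates. In the latter case one must use $q_0 > 0$ dynamically, coupling walker and ghost walker over blocks of consecutive steps during which an obstructing particle has a positive probability, via its laziness, of having moved off the ghost walker's forthcoming path; this block-coupling restores a positive constant per particle and allows the argument above to go through.
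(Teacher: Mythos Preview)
Your conditional-independence framework is a natural idea and differs from the paper's route, but the per-trial lower bound has a real gap. After conditioning on $\omega$ you assert that the number of encounters between the ghost and the particles is $O(K_L)$ because ``the difference between a fixed particle's position and the ghost walker is a biased random walk with drift $-v_\circ$''. That statement is only true in the \emph{annealed} sense; once $\omega$ is frozen, the particle path is deterministic and the difference is a time-inhomogeneous walk whose encounter count can be large. Your event $\mathcal{G}$ bounds the \emph{number} of trajectories in each box but says nothing about their shape, so nothing prevents one of the $\le K_L$ particles from shadowing the ghost's mean path $s\mapsto X_-+v_\circ s$; for such an $\omega\in\mathcal{G}$ the quenched expected number of encounters with that single particle is of order $\sqrt{T_1}=L^{\beta/2}$, and the bound $q_i(\omega)\ge L^{-c_1}$ fails. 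Augmenting $\mathcal{G}$ to force particles to have typical $O(\sqrt{s\log L})$ fluctuations still leaves of order $\sqrt{\log L}$ encounters per particle, hence $q_i(\omega)\gtrsim\exp(-c(\log L)^{3/2})$, which is too small against $M_L=L^{\alpha-\beta}$ trials. A related issue: the exponent $c_1$ you produce depends on $p_\circ,p_\bullet$ and on $C$, so you cannot afterwards ``choose the scale parameters so that $\alpha-\beta>c_1$''. The $p_\bullet=0$, $q_0>0$ sketch is too vague to assess.

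The paper avoids freezing the whole environment. Its key step is a permeability estimate (Lemma~\ref{l:permeability}) proved by induction on the number of particles: using either $q_0>0$ or $p_\bullet>0$, one explicitly sends a single particle off the ghost's cone with a fixed positive probability and removes it via Lemma~\ref{l:localeventslocalparticles}, yielding $\P_\eta(G_\infty\cap\Lambda_\infty)\ge p_*^{|\eta|}$ uniformly over $\eta$ with $\eta(0)=0$. The $M_L$ trials are then chained not by conditioning on $\omega$ but by the Markov property at the times $T_i$ (Lemma~\ref{l:crossfinitetraps}), which requires only the particle \emph{configuration} at time $T_i$. Finally one localizes and bounds the number of relevant particles by $\delta\log L$ with $\delta$ small enough that $\delta\log(1/p_*)<\alpha-\beta$; this is where the balance you worried about is struck, and it works precisely because the per-particle cost $p_*$ is a fixed constant rather than something that degrades with the encounter count.
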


We are now ready to prove Theorem~\ref{t:triggerperm}.
\begin{proof}[Proof of Theorem~\ref{t:triggerperm}]
First we argue that, for some constant $c > 0$,
\begin{equation}\label{pptrigger0}
\P^{L^{-\frac{1}{16}}}\left( \sup_{0 \le s \le L^\alpha} X_s < L^\beta \right) \le c e^{-c^{-1}(\log L)^2} \;\;\;\forall \; L \in \N.
\end{equation}
Indeed, by Lemma~\ref{l:compbarX}, the complement of the event in \eqref{pptrigger0} contains the event
\[
\left\{\min_{0 \le s \le L^\alpha} X_s \ge X_- \right\} \bigcap \bigcup_{i \in [0, M_L-1]} G^{(X_-, T_i)}_{T_1} \cap \{\bar{X}^{(X_-, T_i)}_{T_1} \ge L^\beta \},
\]
which by Propositions~\ref{p:lowerestimate}--\ref{p:crossingtraps} has probability at least $1 - c e^{-c^{-1}(\log L)^2}$.

Now let $\sigma_k$ be the sequence of random times when the increments of $X$ are at least $L^\beta$, i.e.,
$\sigma_0 := 0$ and recursively
\begin{equation}\label{defsigma}
\sigma_{k+1} := \inf\{ s > \sigma_k \colon\, X_s - X_{\sigma_k} \ge L^\beta \}, \;\; k \ge 0.
\end{equation}
Setting $K := \sup\{k \ge 0 \colon\, \sigma_k \le L \}$, we obtain
\begin{equation}\label{pptrigger1}
X_L = \sum_{i=0}^{K-1} X_{\sigma_{i+1}}-X_{\sigma_i} + X_L - X_{\sigma_{K}} \ge K L^\beta - (\sigma_{K+1} - \sigma_{K}).
\end{equation}
On the event
\begin{equation}\label{defAL}
B_L := \{\sigma_{k+1} - \sigma_k \le L^{\alpha} \;\forall\; k = 0, \ldots, K\},
\end{equation}
we have $K \ge L^{1-\alpha} -1$. Therefore, by \eqref{pptrigger1}, on $B_L$ we have
\begin{equation}\label{pptrigger2}
X_L \ge L^{1-\alpha + \beta} - L^\beta - L^\alpha \ge L^{\frac{15}{16}}
\end{equation}
for large $L$ since $1 - \alpha + \beta > 15/16 > \alpha > \beta$.
Thus we only need to control the probability of $B_L$.
But, by the definition of $X$,
\begin{align}\label{pptrigger3}
\P^{L^{-\frac{1}{16}}}\left( B_L^c \right) 
& \le \P^{L^{-\frac{1}{16}}}\left( \exists\; (x,t) \in [-L, L] \times [0,L] \colon\, \sup_{s \in [0,L^\alpha]}X^{(x,t)}_s < L^\beta \right) \nonumber\\
& \le c L^2 \, \P^{L^{-\frac{1}{16}}}\left( \sup_{0 \le s \le L^\alpha} X_s < L^\beta \right) \le c e^{-c^{-1}(\log L)^2},
\end{align}
where we used a union bound, translation-invariance and \eqref{pptrigger0}.
This completes the proof of Theorem~\ref{t:triggerperm}.
\end{proof}

\subsubsection{Proof of Proposition~\ref{p:lowerestimate}}
\label{ss:proofproplowerestimate}

Recall the definition of $\hat{Z}$ in \eqref{e:defhatz}.
The idea behind the proof of Proposition~\ref{p:lowerestimate} is that, 
with our choice of scales, the interval $[\hat{Z} - \ell_L, \hat{Z} + \ell_L]$
remains empty throughout the time interval $[0,L^\alpha]$.
Since inside this interval $X$ behaves as a random walk with a positive drift,
it avoids $X_- \le \hat{Z} - \ell_L+1$ with large probability.

We first show that $\hat{Z} - 2\ell_L \ge -L^\beta$ with large probability.
\begin{lemma}\label{l:estimateE-}
\begin{equation}\label{e:estimateE-}
\P^{L^{-\frac{1}{16}}} \left( \hat{Z} - 2 \ell_L < - L^\beta \right) \le c e^{-c^{-1}L^{\beta-\beta'}}.
\end{equation}
\end{lemma}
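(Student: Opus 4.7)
The key observation is that the event $\{\hat{Z} - 2\ell_L \geq -L^\beta\}$ is guaranteed as soon as one can find a window of length $4\ell_L+1$ lying inside $[-L^\beta, -4\ell_L - 1]$ on which $N(\cdot,0)$ vanishes identically, since the center of any such window is then admissible in the definition~\eqref{e:defhatz}. The plan is therefore to partition $[-L^\beta, -4\ell_L-1]$ into $K$ pairwise disjoint integer blocks $I_1, \dots, I_K$, each of length $4\ell_L+1$; a cheap count gives $K \geq L^{\beta-\beta'}/10$ for $L$ large, using $\ell_L \leq L^{\beta'}$ and $\beta > \beta'$. One then has the contrapositive inclusion
$$\{\hat{Z} - 2\ell_L < -L^\beta\} \;\subset\; \bigcap_{j=1}^{K} \bigl\{N(\cdot,0) \not\equiv 0 \text{ on } I_j\bigr\},$$
and the right-hand events are independent thanks to disjointness of the $I_j$ and the i.i.d.\ structure of $(N(x,0))_{x \in \Z}$ under $\P^{L^{-1/16}}$.

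The proof then reduces to an elementary Poisson computation. Each block $I_j$ is empty with probability $p := e^{-\rho(4\ell_L+1)}$, with $\rho = L^{-1/16}$. The decisive input is the scale ordering~\eqref{e:relationscales}, which forces $\beta' < \beta < 2\beta < 1/8$ and in particular $\beta' < 1/16$; hence $\rho(4\ell_L+1) \leq C L^{\beta'-1/16} \to 0$, so that for $L$ sufficiently large one has $1-p \leq \rho(4\ell_L+1) \leq C L^{\beta'-1/16}$. Combining this with $K \geq L^{\beta-\beta'}/10$ yields
$$\P^{L^{-1/16}}\bigl(\hat{Z} - 2\ell_L < -L^\beta\bigr) \;\leq\; (1-p)^K \;\leq\; \exp\!\bigl(-c\, L^{\beta-\beta'}\log L\bigr)$$
for some $c>0$ and all $L$ large. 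Since $L^{\beta-\beta'}\log L \gg L^{\beta-\beta'}$, this is stronger than the claimed $c_1 e^{-c_1^{-1} L^{\beta-\beta'}}$, and the lemma follows after adjusting constants and absorbing finitely many small values of $L$ into the prefactor.

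I do not foresee a serious obstacle. The argument is essentially a Borel--Cantelli-type estimate for disjoint cells of an i.i.d.\ Poisson field, and the only structural ingredient is $\beta' < 1/16$, which is hardwired into~\eqref{e:relationscales}. The only bookkeeping point deserving mild care is verifying that the candidate centers $z_j := \min(I_j) + 2\ell_L$ produced by empty blocks indeed lie in the admissible range $[-L^\beta + 2\ell_L,\, -2\ell_L - 1]$, so that they genuinely qualify as candidates for the maximum defining $\hat{Z}$.
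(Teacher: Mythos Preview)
Your proof is correct. Both your argument and the paper's yield the same (in fact slightly stronger) bound $\exp\{-c\,L^{\beta-\beta'}\log L\}$, but the routes differ.

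The paper argues via the sequence of gaps between consecutive occupied sites to the left of the origin: it sets $E_0=0$, $E_{k+1}=\max\{z<E_k:N(z,0)>0\}$, so that the increments $E_k-E_{k+1}$ are i.i.d.\ geometric with parameter $1-e^{-L^{-1/16}}$, and then lets $K=\inf\{k\ge 0:|E_{k+1}-E_k|>4\ell_L\}$. Since $K+1$ is geometric with success probability $e^{-4\ell_L L^{-1/16}}\to 1$, one bounds $\P(K+1>\tfrac14 L^{\beta-\beta'})$ and uses $|\hat Z-2\ell_L|\le 4\ell_L(K+1)$ to conclude.

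Your block decomposition is the more elementary alternative: you bypass the gap sequence entirely and use only independence over disjoint intervals. This avoids the (mild) bookkeeping of identifying the distribution of $K$, at the cost of being slightly wasteful---you only test $\sim L^{\beta-\beta'}/10$ candidate windows rather than all of them---but the loss is immaterial for the stated bound. The structural input, namely $\beta'<1/16$ from \eqref{e:relationscales}, is identical in both arguments. Your verification that the block centers $z_j$ lie in $(-\infty,-2\ell_L)$ and satisfy $z_j-2\ell_L\ge -L^\beta$ is the only point needing care, and you have flagged it correctly.
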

\begin{proof}
We may assume that $L$ is large enough.
Let $E_0 := 0$ and recursively
\begin{equation}\label{defEk}
E_{k+1} := \max \{z < E_k \colon\, N_0(z) > 0 \}, \;\; k \ge 0.
\end{equation}
Then $(E_k-E_{k+1})_{k \ge 0}$ are i.i.d.\ Geom($1 - e^{-L^{-\frac{1}{16}}}$) random variables.
Let 
\begin{equation}\label{defK}
K := \inf\{k \ge 0 \colon\, |E_{k+1}-E_{k}| > 4 \ell_L \}.
\end{equation} 
Then $K+1$ has a geometric distribution with parameter $e^{-4 \ell_L L^{1/16}}$. 
Thus
\begin{align}\label{e:estimK}
\P^{L^{-\frac{1}{16}}} \left( K + 1 > \frac14 L^{\beta-\beta'} \right) & \le (1-e^{-4L^{-(1/16 - \beta')}})^{\frac14L^{\beta-\beta'}} \nonumber\\
& \le 4^{\frac14 L^{\beta-\beta'}} e^{ - \frac14 (1/16-\beta')L^{\beta-\beta'} \log L} \le c e^{-c^{-1}L^{\beta-\beta'}}.
\end{align}
Since $|\hat{Z} - 2 \ell_L| \le 4 \ell_L (K+1)$,
\begin{align}\label{e:estimE+}
\P^{L^{-\frac{1}{16}}} \left( \hat{Z} - 2\ell_L < -L^\beta \right) 
& \le \P^{L^{-\frac{1}{16}}} \left( K + 1> \frac14 L^{\beta-\beta'} \right) \le c e^{-c^{-1}L^{\beta-\beta'}}
\end{align}
by \eqref{e:estimK}. This finishes the proof.
\end{proof}

Next we show that, with large probability, the particles of the random environment
do not penetrate deep inside the empty region up to time $L^\alpha$.
Let
\begin{equation}\label{defcB1}
\mathcal{E}_L := \{N(y) = 0 \;\forall\; y \in [\hat{Z} - \ell_L, \hat{Z} + \ell_L] \times [0,L^\alpha]\}.
\end{equation}
\begin{lemma}\label{l:noparticles}
There exists $c > 0$ such that
\begin{equation}\label{e:noparticles}
\P^{L^{-\frac{1}{16}}} \left( \mathcal{E}_L^c \right) \le c e^{- \frac{1}{c}L^{(\beta - \beta') \wedge (2 \beta' -\alpha)}}.
\end{equation}
\end{lemma}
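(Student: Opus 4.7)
The plan is to decompose $\mathcal{E}_L^c$ according to whether the event $A_L := \{\hat{Z} - 2\ell_L \ge -L^\beta\}$ from Lemma~\ref{l:estimateE-} holds. On $A_L^c$ the desired bound $c\exp(-c^{-1} L^{\beta-\beta'})$ is already at our disposal, so the task reduces to estimating $\P^{L^{-1/16}}(\mathcal{E}_L^c \cap A_L)$, which should account for the second exponent $2\beta'-\alpha$.

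For this second piece, I would start from the observation that, by the very definition of $\hat{Z}$, no particle is initially present in $[\hat{Z}-2\ell_L, \hat{Z}+2\ell_L]$; hence any particle visiting $[\hat{Z}-\ell_L, \hat{Z}+\ell_L]$ during $[0,L^\alpha]$ must start from outside this enlarged interval and in particular must travel a spatial distance of at least $\ell_L$ within time $L^\alpha$. To turn this into an unconditional estimate and bypass the nonlocal dependence of $\hat{Z}$ on the initial Poisson field, I would union-bound over the possible values of $\hat{Z}$ on $A_L$, namely $z \in [-L^\beta, -2\ell_L] \cap \Z$, obtaining
\begin{equation*}
\mathcal{E}_L^c \cap A_L \subset \bigcup_{z} \Big\{ \exists\, x \in \Z,\ i \le N(x,0),\ t \le L^\alpha \colon |x-z| > 2\ell_L \text{ and } S^{x,i}_t \in [z-\ell_L, z+\ell_L] \Big\}.
\end{equation*}

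For each fixed $z$ I would apply Markov's inequality to the number of such particles, using the Poisson intensity $\rho = L^{-1/16}$ and the standard Gaussian tail bound $\P(\max_{0 \le t \le L^\alpha} |S_t| \ge r) \le c \exp(-c^{-1} r^2/L^\alpha)$ for (possibly lazy) simple symmetric walks. Summing the resulting per-particle bound $c\exp\bigl(-c^{-1}(|x-z|-\ell_L)^2/L^\alpha\bigr)$ over $x$ outside $[z-2\ell_L,z+2\ell_L]$ gives a Gaussian integral controlled by $c\sqrt{L^\alpha}\exp(-c^{-1} L^{2\beta'-\alpha})$, the essential input being $\ell_L/\sqrt{L^\alpha}=L^{\beta'-\alpha/2}\to\infty$ thanks to $\beta' > \alpha/2$. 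Multiplying by $\rho$ and summing over the $O(L^\beta)$ values of $z$ yields a bound of the form $c L^{\beta+\alpha/2-1/16}\exp(-c^{-1} L^{2\beta'-\alpha})$, in which the polynomial prefactor is absorbed by the stretched exponential for large $L$. Combined with the bound on $\P(A_L^c)$, this delivers \eqref{e:noparticles} with the exponent $(\beta-\beta')\wedge(2\beta'-\alpha)$.

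I do not expect genuine difficulties; the only mild subtlety is the nonlocal dependence of $\hat{Z}$ on the Poisson environment, which is precisely why I would pass to a union bound over $z$ before computing first moments, rather than attempt to condition on $\hat{Z}$ directly and track the induced bias on the surrounding initial configuration.
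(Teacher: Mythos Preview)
Your proposal is correct and follows essentially the same route as the paper: split off the event $\{\hat{Z} < -L^\beta\}$ via Lemma~\ref{l:estimateE-}, then union-bound over the at most $O(L^\beta)$ candidate positions of $\hat{Z}$ and, for each fixed centre, control by a first-moment estimate the number of particles starting outside the $2\ell_L$-neighbourhood that reach the $\ell_L$-neighbourhood within time $L^\alpha$, using the Gaussian tail for the running maximum of a simple random walk. The paper phrases the last step as $\P(\widehat{N}_L(x)>0)\le\lambda_L(x)$ for the Poisson variable $\widehat{N}_L(x)$, which is exactly your Markov inequality, and its polynomial prefactor differs from yours only by a harmless power of $L$.
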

\begin{proof}
For $x \in \Z$, the random variable
\begin{equation}\label{defNLx}
\widehat{N}_L(x) := \sum_{z \notin [x- 2 \ell_L, x+ 2\ell_L]} \; \sum_{i \le N(z,0)} \mathbf{1}_{\{\exists\; s\in [0,L^\alpha] \colon\, S^{z,i}_s \in [x-\ell_L, x+\ell_L]\}}
\end{equation}
has a Poisson distribution with parameter
\begin{equation}\label{parameterNLx}
\lambda_L(x) := L^{-\frac{1}{16}} \sum_{z \notin [x- 2 \ell_L, x+ 2\ell_L]} P( \exists\; s \in [0,L^\alpha] \colon\, S^{z,1} \in [x-\ell_L, x+ \ell_L]  ),
\end{equation}
where $S^{z,1}$ is a simple symmetric random walk started at $z$ as defined in the introduction.
By standard random walk estimates, we have
\begin{align}\label{e:estimparamNLx}
\lambda_L(x) 
 \le 2 \sum_{k > \ell_L} P \left(\sup_{s \in [0,L^\alpha]} |S^{0,1}_s| \ge k \right) \le c \sum_{k > \lfloor L^{\beta'} \rfloor} e^{-\frac{k^2}{c L^\alpha}} 
 \le c L^{\alpha} e^{-c^{-1}L^{2\beta'-\alpha}}.
\end{align}
Therefore, by Lemma~\ref{l:estimateE-} and \eqref{e:estimparamNLx},
\begin{align}\label{e:nopart1}
\P^{L^{-\frac{1}{16}}} \left( \widehat{N}_L(\hat{Z}) > 0  \right)  
& \le \P^{L^{-\frac{1}{16}}} \left( \hat{Z} < -L^\beta\right)
+ \P^{L^{-\frac{1}{16}}} \left( \exists\; x \in [-L^\beta,0] \colon\, \widehat{N}_L(x) > 0 \right) \nonumber\\
& \le c e^{-c^{-1}L^{(\beta- \beta')}} +  c L^\beta \sup_x \lambda_L(x) \le c e^{-\frac{1}{c}L^{(\beta-\beta')\wedge(2\beta'-\alpha)}}.
\end{align}
Since $N(z,0)=0$ for all $z \in [\hat{Z}-2\ell_L, \hat{Z} + 2\ell]$ by definition,
$\widehat{N}_L(\hat{Z})$ is equal to the total number of particles that enter $[\hat{Z} - \ell_L, \hat{Z}+\ell_L] \times [0,L^\alpha]$.
This completes the proof.
\end{proof}

Let now, for $t \in \N$,
\begin{equation}\label{defhittimes}
\begin{array}{lcl}
H^{(t)}_{+} & := & \inf\{ s \ge 0 \colon\; X^{(\hat{Z},t)}_s -\hat{Z} = \ell_L \}, \\
H^{(t)}_{-} & := & \inf\{ s \ge 0 \colon\; X^{(\hat{Z},t)}_s -\hat{Z} = - \ell_L +1 \}
\end{array}
\end{equation}
be the times when the random walk $X^{(\hat{Z},t)}$ hits the sites $\hat{Z} + \ell_L$ or $\hat{Z} - \ell_L +1$.
Let
\begin{equation}\label{defcDL}
\mathcal{D}_L := \{ H^{(t)}_- > H^{(t)}_+ \wedge (L^{\alpha}-t) \;\forall\; t \in [0,L^\alpha]\}.
\end{equation}
The last lemma of this section shows that also $\mathcal{D}_L$ has large probability.

\begin{lemma}\label{l:blockevent}
\begin{equation}\label{e:blockevent}
\P^{L^{-\frac{1}{16}}} \left( \mathcal{D}_L^c \,\middle|\, \mathcal{E}_L \right) \le c e^{-c^{-1}L^{\beta'}}.
\end{equation}
\end{lemma}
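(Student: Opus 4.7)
The plan is to reduce the claim to a classical Gambler's ruin estimate for a biased random walk. First I would apply a union bound over $t \in [0,L^\alpha] \cap \Z$, reducing the task to bounding, uniformly in $t$,
\[
\P^{L^{-1/16}}\bigl( H^{(t)}_- \le H^{(t)}_+ \wedge (L^\alpha - t) \,\big|\, \mathcal{E}_L\bigr),
\]
which is itself at most $\P^{L^{-1/16}}( H^{(t)}_- < H^{(t)}_+ \,|\, \mathcal{E}_L)$ (the two hitting times cannot coincide, since the barriers $\hat{Z} - \ell_L + 1$ and $\hat{Z} + \ell_L$ have opposite parities relative to the starting point $\hat{Z}$).

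The observation to exploit is that on $\mathcal{E}_L$, before the stopping time $H^{(t)}_+ \wedge H^{(t)}_-$ the walker $X^{(\hat{Z},t)}$ must remain in $[\hat{Z} - \ell_L + 2, \hat{Z} + \ell_L - 1]$: to leave the empty region $[\hat{Z} - \ell_L, \hat{Z} + \ell_L]$ it would first have to visit one of the boundary sites $\hat{Z} - \ell_L + 1$ or $\hat{Z} + \ell_L$, which are precisely the points whose hitting times define that stopping time. In particular, up to this stopping time the walker visits only empty sites. Since the uniforms $U_y$ are independent of $\omega$ (and hence of $\mathcal{E}_L$ and of $\hat{Z}$, which are both measurable with respect to $\omega$), the conditional law of $(X^{(\hat{Z},t)}_s)_{s \le H^{(t)}_+ \wedge H^{(t)}_-}$ given $\mathcal{E}_L$ and $\hat{Z}$ coincides with that of a biased simple random walk with right-jump probability $p_\circ > 1/2$ started at $\hat{Z}$.

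I would then invoke the classical Gambler's ruin formula: setting $\xi := (1-p_\circ)/p_\circ \in (0,1)$, the probability that such a walk reaches $\hat{Z} - \ell_L + 1$ before $\hat{Z} + \ell_L$ equals
\[
\frac{(1 - \xi^{\ell_L})\,\xi^{\ell_L - 1}}{1 - \xi^{2\ell_L - 1}} \;\le\; 2\, \xi^{\ell_L - 1}
\]
for all $L$ sufficiently large. Since $\ell_L = \lfloor L^{\beta'}\rfloor$, this is bounded by $c\, e^{-c^{-1} L^{\beta'}}$; the union bound over the $O(L^\alpha)$ admissible values of $t$ only introduces a polynomial prefactor that is readily absorbed into the exponential, giving the claimed estimate.

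The argument should be largely routine; the only point requiring some vigilance is the conditional independence used to isolate the biased-random-walk dynamics, which follows from the product structure of the underlying probability space $(\omega, U)$ together with the fact that both $\mathcal{E}_L$ and $\hat{Z}$ depend only on the $\omega$-coordinate.
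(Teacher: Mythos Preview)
Your approach is essentially the paper's: union bound over $t\in[0,L^\alpha]$, then on $\mathcal{E}_L$ couple $X^{(\hat{Z},t)}$ to the $p_\circ$-biased walk $\bar{X}^{(\hat{Z},t)}$ inside the empty strip and apply a standard estimate. The paper bounds by $\P(\inf_{s\ge 0}\bar{X}^{(\hat{Z},t)}_s-\hat{Z}\le -\ell_L+1)$ rather than by the Gambler's-ruin formula, but the two give the same exponential rate.

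There is one slip. The event $\mathcal{E}_L$ only guarantees that $[\hat{Z}-\ell_L,\hat{Z}+\ell_L]\times[0,L^\alpha]$ is empty, so on $\mathcal{E}_L$ the process $X^{(\hat{Z},t)}$ coincides with a $p_\circ$-biased walk only up to time $(L^\alpha-t)\wedge H^{(t)}_+\wedge H^{(t)}_-$, not up to $H^{(t)}_+\wedge H^{(t)}_-$. Your enlargement $\{H^{(t)}_-\le H^{(t)}_+\wedge(L^\alpha-t)\}\subset\{H^{(t)}_-<H^{(t)}_+\}$ is a valid inclusion, but the subsequent claim that the walker sees only empty sites up to $H^{(t)}_+\wedge H^{(t)}_-$ can fail when that stopping time exceeds $L^\alpha-t$; hence the conditional law of $X^{(\hat{Z},t)}$ up to $H^{(t)}_+\wedge H^{(t)}_-$ is not that of a biased walk in general. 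The fix is immediate: do not enlarge. On $\mathcal{E}_L\cap\{H^{(t)}_-\le H^{(t)}_+\wedge(L^\alpha-t)\}$ the coupling holds all the way to time $H^{(t)}_-$, so the \emph{ghost} walk $\bar{X}^{(\hat{Z},t)}$ itself reaches $\hat{Z}-\ell_L+1$ before $\hat{Z}+\ell_L$, and your Gambler's-ruin bound then applies directly to $\bar{X}$, which is independent of $\mathcal{E}_L$ given $\hat{Z}$.
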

\begin{proof}
Fix $t \in [0,L^\alpha]$ and note that, on the event $\mathcal{E}_L$,
$X^{(\hat{Z},t)}_s-\hat{Z}$ is up to time $H^{(t)}_+ \wedge H^{(t)}_- \wedge (L^\alpha-t)$  equal to $\bar{X}^{(\hat{Z},t)}_s-\hat{Z}$.
The latter is a random walk with drift $v_\circ>0$, so by standard estimates we obtain
\begin{align}\label{pblockevent1}
\P^{L^{-\frac{1}{16}}} \left( H^{(t)}_- \le H^{(t)}_+ \wedge (L^\alpha-t) \,\middle|\, \mathcal{E}_L \right)
& \le \P^{L^{-\frac{1}{16}}} \left( \inf_{s \ge 0} \bar{X}^{(\hat{Z},t)}_s - \hat{Z} \le - \ell_L + 1 \right)  \nonumber\\
& \le ce^{-c^{-1}\ell_L} \le c e^{-c^{-1}L^{\beta'}}.
\end{align}
The proof is completed using \eqref{pblockevent1} and a union bound over $t \in [0,L^\alpha]$.
\end{proof}

With Lemmas~\ref{l:estimateE-}--\ref{l:blockevent} at hand we can finish the proof of Proposition~\ref{p:lowerestimate}.
\begin{proof}[Proof of Proposition~\ref{p:lowerestimate}]
By Lemmas~\ref{l:noparticles}--\ref{l:blockevent},
\begin{equation}\label{e:pproplowerestimate}
\P^{L^{-\frac{1}{16}}} \left( \mathcal{D}_L \right) \ge 1-c e^{-c^{-1}L^{\varepsilon}}
\end{equation}
where $\varepsilon := \beta' \wedge (\beta - \beta') \wedge (2\beta' - \alpha)$.
The proof is finished by noting that, 
since $X$ must hit $\hat{Z}$ in order to reach $\hat{Z} - \ell_L+1 \ge X_-$,
if $\mathcal{D}_L$ occurs then $X_s \ge X_-$ $\forall$ $s \in [0,L^\alpha]$.
\end{proof}

\subsubsection{Proof of Proposition~\ref{p:crossingtraps}}
\label{ss:proofpropcrossingtraps}

The proof of Proposition~\ref{p:crossingtraps} follows two steps that are presented in Lemmas~\ref{l:permeability} and \ref{l:crossfinitetraps}.
We first show an lower bound on the probability of $G_\infty \cap \Lambda_\infty$.
This lower bound is provided in Lemma~\ref{l:permeability} and decays exponentially in the number of particles in $\eta$.
Intuitively speaking this can be interpreted as if the walker had to pay a constant price to ignore each particle.

Then in Lemma~\ref{l:crossfinitetraps} we show that, if the initial configuration has a logarithmic number of particles and we are given enough attempts, the walker is very likely to ignore all of them.

For $(x,t) \in \Z^2$ and $T \in [0,\infty]$, let
\begin{equation}\label{defLambdaxt}
\Lambda^{(x,t)}_T := \left\{ \bar{X}^{(x,t)}_s - x \ge \frac12 v_\circ s \,\;\forall\, s \in [0, T] \right\}.
\end{equation}
When $(x,t) = (0,0)$, we will omit it from the notation for both $G^{(x,t)}_T$ and $\Lambda^{(x,t)}_T$.

For $\eta \in \Z_+^\Z$, denote by
\begin{equation}\label{d:numbereta}
|\eta| := \sum_{z \in \Z} \eta(z) \;\in [0,\infty]
\end{equation}
the total number of particles in $\eta$.
Note that $|N(\cdot,t)| = |\eta|$ a.s.\ under $\P_\eta$.

The first goal of the section is the following key lemma, 
providing a lower bound on the probability of $G_\infty \cap \Lambda_\infty$ when $|\eta| < \infty$ and $\eta(0) = 0$.
\begin{lemma}\label{l:permeability}
There exists $p_* \in (0,1)$ such that
\begin{equation}\label{e:permeability}
\inf_{\eta \colon |\eta| \le k, \eta(0) = 0} \P_\eta \left( G_\infty \cap \Lambda_\infty \right) \ge p_*^{k} \;\;\;\; \forall\; k \ge 0.
\end{equation}
\end{lemma}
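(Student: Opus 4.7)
My plan is to prove Lemma \ref{l:permeability} by induction on $k = |\eta|$, using the independence of the particle trajectories together with the permeability hypothesis $p_\bullet \vee q_0 > 0$ to pay at most one constant factor per particle. The base case $k = 0$ is immediate: if $\eta \equiv 0$ then $N \equiv 0$, so $X = \bar{X}$ almost surely and $\P_0(G_\infty \cap \Lambda_\infty) = \P(\Lambda_\infty)$, which is strictly positive because $\bar{X}$ is a random walk with drift $v_\circ > 0$. This positive base constant can be absorbed into the choice of $p_*$.

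For the inductive step I fix $\eta$ with $|\eta| \le k$ and $\eta(0) = 0$, pick a particle labelled $(z_0, i_0)$ with $z_0 \ne 0$, set $\eta' := \eta - \delta_{z_0}$, and condition on $\bar{X}$ and on the environment generated by $\eta'$. By construction, the trajectory $S := S^{z_0, i_0}$ of the removed particle is independent of these. Restoring $S$ to the environment changes $N(\bar{X}_s, s)$ only at the meeting times $T_S := \{s \ge 0 \colon \bar{X}_s = S_s\}$, and imposes an extra coupling constraint on $U_{\bar{X}_s, s}$ only at those $s \in T_S$ for which $\bar{X}_s$ was empty in the $\eta'$-environment. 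The new constraint reads $U_{\bar{X}_s, s} \in [0, p_\bullet] \cup (p_\circ, 1]$, which, conditional on the direction of $\bar{X}$'s jump, has probability $p_\bullet / p_\circ$ when $\bar{X}$ jumps right and $1$ when it jumps left. The multiplicative cost of adding $S$ is therefore at least $(p_\bullet/p_\circ)^{|T_S|}$, and the induction closes once I show $\E[(p_\bullet/p_\circ)^{|T_S|} \mathbbm{1}_{\Lambda_\infty}] \ge p_* \cdot \P(\Lambda_\infty)$ uniformly in $z_0 \ne 0$.

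On $\Lambda_\infty$ the difference process $\bar{X}_s - S_s$ is a random walk with drift at least $v_\circ/2 > 0$, so $|T_S|$ has an exponential tail uniformly in $z_0$ by standard large-deviation estimates for the difference walk; when $p_\bullet > 0$ the desired lower bound on the expectation follows immediately. The main obstacle will be the degenerate subcase $p_\bullet = 0$, where the per-meeting factor $p_\bullet/p_\circ$ vanishes and the multiplicative-cost bound collapses. Here the permeability condition forces $q_0 > 0$, and I must instead lower-bound $\P(T_S = \emptyset)$, i.e.\ the probability that $\bar{X}$ and the lazy particle $S$ never coincide. This can be achieved with hitting-time estimates: the positive holding probability $q_0$ lets $S$ remain in a half-line disjoint from the space-time trajectory of $\bar{X}$ with probability bounded below uniformly in $z_0 \ne 0$, while $\bar{X}$'s positive drift ensures that it exits any bounded region and does not return.
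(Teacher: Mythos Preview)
Your overall strategy---induction on $k$ with a constant multiplicative cost per particle---matches the paper's, but the inductive step does not close as written. After conditioning on $\bar{X}$, the $\eta'$-trajectories, and $S$, you correctly obtain
\[
\P_\eta(G_\infty \cap \Lambda_\infty) \;\ge\; \E\Big[\mathbbm{1}_{G_\infty^{\eta'}\cap \Lambda_\infty}\,(p_\bullet/p_\circ)^{|T_S|}\Big],
\]
but to deduce $\P_\eta(G_\infty\cap\Lambda_\infty)\ge p_*\,\P_{\eta'}(G_\infty\cap\Lambda_\infty)$ you need the \emph{pathwise} bound $\E_S[(p_\bullet/p_\circ)^{|T_S|}]\ge c$ for \emph{every fixed} trajectory $\bar{X}$ satisfying $\Lambda_\infty$ (since $G_\infty^{\eta'}$ depends on $\bar{X}$ and on the $U$'s along it). Your stated sufficient condition $\E[(p_\bullet/p_\circ)^{|T_S|}\mathbbm{1}_{\Lambda_\infty}]\ge p_*\P(\Lambda_\infty)$ is only an averaged statement and does not control the correlation with $G_\infty^{\eta'}$. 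Moreover, your justification---``the difference process $\bar{X}_s-S_s$ is a random walk with drift at least $v_\circ/2$''---is only true when $\bar{X}$ is \emph{not} conditioned on; once $\bar{X}$ is a fixed deterministic path in $\Lambda_\infty$, the difference is a time-inhomogeneous walk whose local drift can be $-1$ for long stretches, and it is not clear that $|T_S|$ has a uniform exponential tail. The $p_\bullet=0$ case has the same issue: your sketch does not establish $\P_S(T_S=\emptyset)\ge c$ uniformly over $\bar{X}\in\Lambda_\infty$ and $z_0>0$ large (where $\bar X$ must overtake $S$).

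The paper sidesteps this uniformity problem by never conditioning on the random path $\bar{X}$. Instead it exploits that $G_\infty\cap\Lambda_\infty$ is measurable with respect to $N$ and $U$ restricted to a \emph{deterministic} cone $A'$ (essentially $\{x\ge \tfrac12 v_\circ t\}$). A designated particle is asked to avoid $A'$---a fixed set, so the avoidance probability is a constant $\tilde p>0$ depending only on $q_0$ or $p_\bullet$---and Lemma~\ref{l:localeventslocalparticles} then decouples that particle from the rest, yielding $\P_\eta(G_\infty\cap\Lambda_\infty)\ge \tilde p\,\P_{\eta'}(G_\infty\cap\Lambda_\infty)$ directly. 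A stopping time $\tau$ (first time a particle is adjacent to $\bar{X}$) together with the Markov property reduces the general case to this one. If you want to repair your argument, replace ``$S$ avoids $\bar{X}$'' by ``$S$ avoids the cone'', which is exactly what makes the decoupling uniform.
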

In order to prove Lemma~\ref{l:permeability}, we will need an auxiliary result.
For a set $B \subset \Z$ and two configurations 
$\eta, \xi \in \Z_+^\Z$ satisfying $\xi \le \eta$ (i.e., $\xi(x) \le \eta(x) \;\forall\; x \in \Z$), 
let
\begin{equation}\label{defetaB}
\eta^{B,\xi}(x) := \left\{ \begin{array}{ll}
                     \eta(x) - \xi(x) & \text{ if } x \in B, \\
                     \eta(x)  & \text{ otherwise.}
                   \end{array} \right.
\end{equation}
For $A \subset \Z^2$, we write $N(A) = (N(y))_{y \in A}$ and $U_A = (U_y)_{y \in A}$.
The following lemma is a consequence of the i.i.d.\ nature of the particles in the environment.
\begin{lemma}\label{l:localeventslocalparticles}
Let $A \subset \Z^2$ and $B \subset \Z$.
For any two configurations $\xi \le \eta \in \Z_+^\Z$ 
and any measurable bounded function $f$,
\begin{equation}\label{e:localeventslocalparticles}
\begin{aligned}
& \E_{\eta} \left[ f\left(N(A), U_A\right) \,\middle|\, (S^{z,i})_{i \le \xi(z), z \in B}\right] 
 = \E_{\eta^{B,\xi}} \left[ f\left(N(A), U_A \right) \right] \\ 
 & \text{a.s.\ on the event } \{ S^{z,i}_n \cap A = \emptyset \,\;\forall\; n \in \Z, i \le \xi(z), z \in B \}. 
\end{aligned}
\end{equation}
\end{lemma}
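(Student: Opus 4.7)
Let $\mathcal{G} := \sigma\bigl((S^{z,i})_{i \le \xi(z),\, z \in B}\bigr)$ be the $\sigma$-algebra generated by the trajectories we condition on, and let
\[
E := \bigl\{ S^{z,i}_n \cap A = \emptyset \;\;\forall\; n \in \Z,\, i \le \xi(z),\, z \in B \bigr\}.
\]
Since $E \in \mathcal{G}$, we may freely multiply by $\mathbbm{1}_E$ inside any $\mathcal{G}$-conditional expectation. The plan is to split the index set of particles under $\P_\eta$ into the ``conditioned'' collection $J := \{(z,i) : z \in B,\, i \le \xi(z)\}$ and its complement $I := \{(z,i) : i \le \eta(z)\} \setminus J$, and to show that on $E$ only the particles in $I$ contribute to $N(A)$.

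Define
\[
\widetilde{N}(y) := \sum_{(z,i) \in I} \mathbbm{1}_{\{S^{z,i}_t = x\}}, \qquad y = (x,t) \in \Z^2.
\]
On $E$, the particles in $J$ avoid $A$, hence $N(y) = \widetilde{N}(y)$ for every $y \in A$. Combined with $\mathbbm{1}_E \in \mathcal{G}$ this gives, $\P_\eta$-a.s.,
\[
\mathbbm{1}_E\, \E_\eta\bigl[f(N(A), U_A) \mid \mathcal{G}\bigr]
= \mathbbm{1}_E\, \E_\eta\bigl[f(\widetilde{N}(A), U_A) \mid \mathcal{G}\bigr].
\]
Now the random variables $(S^{z,i})_{(z,i) \in I}$ are jointly independent of those in $J$ by construction, and $U$ is independent of the whole family $(S^{z,i})_{z,i}$. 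Therefore $\bigl(\widetilde{N}(A), U_A\bigr)$ is independent of $\mathcal{G}$, so the conditional expectation on the right collapses to the unconditional one:
\[
\E_\eta\bigl[f(\widetilde{N}(A), U_A) \mid \mathcal{G}\bigr] = \E_\eta\bigl[f(\widetilde{N}(A), U_A)\bigr].
\]

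The final step is a distributional identification: since the trajectories $(S^{z,i})_{(z,i) \in I}$ are independent double-sided SSRWs with $S^{z,i}_0 = z$ and there are exactly $\eta^{B,\xi}(z)$ of them started at each $z \in \Z$, relabeling the indices shows that $\widetilde{N}$ under $\P_\eta$ has the same law as $N$ under $\P_{\eta^{B,\xi}}$, jointly with the i.i.d.\ family $U$. Hence $\E_\eta[f(\widetilde{N}(A), U_A)] = \E_{\eta^{B,\xi}}[f(N(A), U_A)]$, which combined with the displays above gives the claim on $E$. The argument is essentially bookkeeping; the only point that needs care is ensuring that the event $E$ is truly $\mathcal{G}$-measurable (so that one may condition on it) and that the relabeling of particles preserves the joint distribution with $U_A$, which is immediate from the i.i.d.\ structure.
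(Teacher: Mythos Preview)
Your proof is correct and follows essentially the same approach as the paper's: the paper defines the same reduced occupation field (there denoted $N^{B,\xi}$, your $\widetilde{N}$), observes that on the avoidance event it coincides with $N$ on $A$, that it is independent of the conditioned trajectories, and that its law under $\P_\eta$ equals that of $N$ under $\P_{\eta^{B,\xi}}$. Your write-up merely spells out the measurability and independence steps in slightly more detail.
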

\begin{proof}
For $(x,t) \in \Z^2$, let
\begin{equation}\label{defNB}
N^{B,\xi}(x,t) := \sum_{z \notin B} \; \sum_{1 \le i \le \eta(z)} \mathbf{1}_{\{S^{z,i}_t = x\}}
+ \sum_{z \in B} \; \sum_{ \xi(z) < i \le \eta(z)} \mathbf{1}_{\{S^{z,i}_t = x\}}.
\end{equation}
On the event in the second line of \eqref{e:localeventslocalparticles}, $f(N(A),U_A) = f(N^{B,\xi}(A), U_A)$
and the latter is independent of $(S^{(z,i)})_{i \le \xi(z), z \in B}$.
To conclude, note that $N^{B,\xi}$ has under $\P_\eta$ the same distribution
of $N$ under $\P_{\eta^{B,\xi}}$.
\end{proof}

We can now give the proof of Lemma~\ref{l:permeability}.
\begin{proof}[Proof of Lemma~\ref{l:permeability}]
We start with the case $q_0>0$.
We claim that one may assume $\eta(z) = 0$ for all $z \le 0$.
Indeed, apply Lemma~\ref{l:localeventslocalparticles}  with 
$A = \{(x,t) \in \Z^2_+ \colon\, x \ge \tfrac12 v_\circ t\}$, $B= (-\infty, -1] \cap \Z$
and $\xi(z) = \eta(z) \mathbbm{1}_{\{z < 0\}}$ to obtain
\[
\P_\eta(G_\infty \cap \Lambda_\infty) \ge P(S^{0,1}_n -1 \notin A \;\forall\, n \in \Z_+)^{|\xi|} \, \P_{\eta^{B,\xi}} (G_\infty \cap \Lambda_\infty )
\]
where $\eta^{B,\xi}(z) = 0$ for all $z \le 0$ and $|\eta^{B,\xi}| = |\eta|-|\xi|$.
We thus let
\begin{equation}\label{prlperm1}
p_k := \inf_{|\eta|=k, \eta(z) = 0 \,\forall\, z \le 0} \P_\eta(G_\infty \cap \Lambda_\infty).
\end{equation}
It is clear that
\begin{equation}\label{e:inductionbase}
p_0 = \P_0(\Lambda_\infty) = P \left( \bar{X}_n \ge \tfrac12 v_\circ n \;\forall\, n \in \Z_+ \right) > 0.
\end{equation}
Let $A'=\left( \bigcup_{i=0}^2\{(i,i)\} \right) \cup \{(x,t) \in \Z^2 \colon t \ge 3, x \ge \tfrac12 v_\circ t\}$ and 
$B=\{1,2\}$. We say that ``$S^{z,i}$ avoids $A'$'' if $S^{z,i}_n \notin A'$ for all $n \in \Z$. 
Since $q_0>0$,
\begin{equation}\label{prlperm3}
\tilde{p} := \inf_{z \in B} P(S^{z,1} \text{ avoids } A') > 0.
\end{equation}
We will prove that, for all $k \ge 0$,
\begin{equation}\label{prlperm2}
p_k \ge p_{**}^k \quad \text{ where } p_{**} := p_0 \tilde{p}
\end{equation}
by induction on $k$.
Let $|\eta| \ge 1$, $\eta(z) = 0$ for all $z \le 0$, and 
assume that \eqref{prlperm2} has been shown for all $k < |\eta|$.

Assume first that $\eta(1)+\eta(2) \ge 1$ and put $\xi(z) = \eta(z) \mathbbm{1}_{\{1,2\}}(z)$.
Noting that $G_\infty \cap \Lambda_\infty$ is measurable in $\sigma(N(A'), U_{A'})$,
use Lemma~\ref{l:localeventslocalparticles} and the induction hypothesis to write
\begin{align}\label{prlperm4}
\P_\eta (G_\infty, \Lambda_\infty) 
& \ge 
\E_\eta \left[ \prod_{z \in B, i\le \xi(z)} \mathbbm{1}_{\{S^{z,i} \text{ avoids } A'\}} \P_{\eta}
\left(G_\infty \cap \Lambda_\infty \,\middle|\, (S^{z,i})_{z \in B, i \le \xi(z)} \right)\right] \nonumber\\
& \ge \tilde{p}^{|\xi|} p_{|\eta| - |\xi|} \ge \tilde{p} p_{**}^{|\eta|-1} \ge p_{**}^{|\eta|}.
\end{align}

If $\eta(1) + \eta(2) = 0$, let
\begin{equation}\label{e:deftau}
\tau := \inf\{n \in \N \colon N(\bar{X}_n + 1, n) + N(\bar{X}_n+2,n) \ge 1\}.
\end{equation}
Note that $\tau < \infty$ a.s.\ since $\bar{X}$ has a positive drift while the environment particles are symmetric.
Let $\bar{\eta}_\tau(x) = N(\bar{X}_\tau + x, \tau)$ and note that,
since the random walks are all $1$-Lipschitz,
$\bar{\eta}_\tau(z) = 0$ for all $z \le 0$.
Furthermore, $X$ is equal to $\bar{X}$ until time $\tau$ since it meets no environment particles up to this time.
Thus, using the Markov property and \eqref{prlperm4} we can write
\begin{align}\label{prlperm5}
\P_\eta(G_\infty \cap \Lambda_\infty) 
& \ge \P_{\eta} \left( \Lambda_\tau \cap G^{(\bar{X}_\tau, \tau)}_\infty \cap \Lambda^{(\bar{X}_\tau, \tau)} \right) 
\nonumber\\
&= \E_\eta \left[\mathbbm{1}_{\Lambda_\tau} \P_{\bar{\eta}_\tau} \left(G_{\infty} \cap \Lambda_\infty \right)\right] \nonumber\\
& \ge \tilde{p} p_{**}^{|\eta|-1} \P_\eta \left(\Lambda_\tau \right) \ge p_{**}^{|\eta|},
\end{align}
completing the induction step.

We turn now to the case $q_0=0$, $p_\bullet > 0$.
In this case, we can actually control
\begin{equation}\label{prlperm6}
p_k := \inf_{|\eta|=k} \, \P_\eta(G_\infty \cap \Lambda_\infty) = \inf_{y \in \Z^2} \, \inf_{|\eta|=k} \, \P_\eta(G^y_\infty \cap \Lambda^y_\infty) ,
\end{equation}
where the second equality holds by the Markov property, particle conservation and translation invariance.
Let $p_{**} := p_\bullet p_0 \hat{p}$
where $p_0$ is as in \eqref{e:inductionbase}
and
\begin{equation}\label{prlperm7}
\hat{p} := P(S^{0,1} \text{ avoids } A''), \quad A'':= \{(x,t) \in \Z^2 \colon\, t \ge 1, x \ge \tfrac12 v_\circ t\}.
\end{equation}
Then we can prove \eqref{prlperm2} by induction in a similar way as for the previous case.

Indeed, suppose first that $\eta(0) > 0$.
Note that, since $X_1 = 1$ when $U_0 \le p_\bullet$,
\begin{align}\label{prlperm8}
\P_\eta \left( G_\infty \cap \Lambda_\infty \right) 
& \ge \P_\eta \left(U_0 \le p_\bullet, G^{(1,1)}_\infty \cap \Lambda^{(1,1)}_\infty \right) 
= p_\bullet \P_\eta \left( G^{(1,1)}_\infty \cap \Lambda^{(1,1)}_\infty \right) \nonumber\\
& \ge p_\bullet \E_\eta \left[ \prod_{i \le \eta(0)} \mathbbm{1}_{\{S^{0,i} \text{ avoids } A''\}} \P_\eta \left( G^{(1,1)}_\infty \cap \Lambda^{(1,1)}_\infty \,\middle|\, (S^{0,i})_{i \le \eta(0)} \right) \right].
\end{align}
Noting that $G^{(1,1)}_\infty \cap \Lambda^{(1,1)}_\infty$ is measurable in $\sigma(N(A''), U_{A''})$,
we may apply Lemma~\ref{l:localeventslocalparticles} with $B=\{0\}$, $\xi = \eta \mathbbm{1}_0$
followed by the induction hypothesis to obtain
\begin{equation}\label{prlperm9}
\P_\eta \left( G_\infty \cap \Lambda_\infty \right) \ge p_\bullet \hat{p}^{|\eta(0)|} p_{|\eta| - |\eta(0)|} \ge p_\bullet \hat{p} p_{**}^{|\eta|-1}.
\end{equation}

If $\eta(0)=0$, define
\begin{equation}\label{prlperm10}
\tau := \inf \{n \in \N \colon\, N(\bar{X}_n,n) \ge 1\} \in [1,\infty].
\end{equation}
Setting $\bar{\eta}_\tau(x) = N(\bar{X_\tau} + x, \tau)$, use the Markov property and \eqref{prlperm9} to write
\begin{align}\label{prlperm11}
\P_\eta \left(\tau < \infty, G_\infty \cap \Lambda_\infty \right)
& \ge \E_\eta \left[ \mathbbm{1}_{\{\tau < \infty\}} \mathbbm{1}_{\Lambda_\tau} \P_{\eta_\tau} \left( G_\infty \cap \Lambda_\infty \right)\right] \nonumber\\
& \ge p_\bullet \hat{p} p_{**}^{|\eta|-1} \P_\eta \left(\tau < \infty, \Lambda_\tau \right).
\end{align}
Now note that $G_\infty$ occurs if $\tau = \infty$ and use \eqref{prlperm11} to obtain
\begin{align}\label{prlperm12}
\P_\eta \left(G_\infty \cap \Lambda_\infty \right) 
& = \P_\eta(\tau = \infty, \Lambda_\infty) + \P_\eta (\tau < \infty, G_\infty \cap \Lambda_\infty) \nonumber\\
& \ge p_\bullet \hat{p} p_{**}^{|\eta|-1} \left\{\P_\eta(\tau = \infty, \Lambda_\infty) + \P_\eta(\tau < \infty, \Lambda_\infty)\right\} = p_{**}^{|\eta|},
\end{align}
concluding the proof.
\end{proof}

Next we use Lemma~\ref{l:permeability} to show that, if $|\eta|$ is sufficiently small 
and is empty in an interval of radius $\ell_L$ around $0$,
then one of the $G^{(0,T_i)}_{T_1}$'s occurs with large probability.
\begin{lemma}\label{l:crossfinitetraps}
There exist $\delta, \varepsilon, c > 0$ such that
\begin{equation}\label{e:crossfinitetraps}
\inf_{\substack{\eta \colon\, |\eta| \le \delta \log L, \\ \eta(z) = 0 \,\forall\, z \in [-\ell_L, \ell_L]  }} \, 
\P_\eta \left( \bigcup_{i \in [0, M_L -1]} G^{(0,T_i)}_{T_1} \cap \Lambda^{(0,T_i)}_{T_1} \right) 
\ge 1 - c e^{-c^{-1}L^{\varepsilon}}.
\end{equation}
\end{lemma}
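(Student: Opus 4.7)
The plan is to use the Markov property of the environment at each time $T_i$ together with Lemma~\ref{l:permeability}, and combine the resulting per-attempt lower bounds via a Chernoff-type argument. By particle conservation $|\eta_{T_i}|=|\eta|\le \delta\log L$ for every $i$, where $\eta_{T_i}(\cdot):=N(\cdot,T_i)$, so Lemma~\ref{l:permeability} will yield a per-attempt lower bound $p:=p_*^{\delta\log L}=L^{-\delta|\log p_*|}$ whenever $\eta_{T_i}(0)=0$. Over the $M_L\asymp L^{\alpha-\beta}$ attempts this is enough to drive the failure probability down to $e^{-c^{-1}L^{\varepsilon}}$, provided $\delta$ is taken small enough.

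Writing $B_i:= G^{(0,T_i)}_{T_1}\cap\Lambda^{(0,T_i)}_{T_1}$, $G_i:=\{\eta_{T_i}(0)=0\}$, and letting $\mathcal{F}_t$ denote the natural filtration of the environment and uniforms up to time $t$, the Markov property applied at time $T_i$ gives
\[ \P_\eta(B_i\mid\mathcal{F}_{T_i}) = \P_{\eta_{T_i}}(G_{T_1}\cap\Lambda_{T_1}) \ge \P_{\eta_{T_i}}(G_\infty\cap\Lambda_\infty) \ge p\cdot \mathbbm{1}_{G_i}, \]
the last step being Lemma~\ref{l:permeability}. Since $B_i\in \mathcal{F}_{T_{i+1}}$, iterating over $i<M_L$ yields
\[ \P_\eta\Bigl(\bigcap_{i<M_L} B_i^c\Bigr) \le \E_\eta\bigl[(1-p)^{N_G}\bigr] \le e^{-pM_L/2} + \P_\eta\bigl(N_G < M_L/2\bigr), \]
with $N_G := \sum_{i<M_L}\mathbbm{1}_{G_i}$. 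Choosing $\delta|\log p_*|<\alpha-\beta$ makes the first term $\le e^{-c^{-1}L^{\alpha-\beta-\delta|\log p_*|}}$.

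For the second term, bound $M_L-N_G\le \sum_i N(0,T_i)=\sum_{z,\, j\le \eta(z)} X_{z,j}$ with $X_{z,j}:=\sum_{i<M_L}\mathbbm{1}_{\{S^{z,j}_{T_i}=0\}}$; these $X_{z,j}$ are \emph{independent} over $(z,j)$. The core estimate is the uniform MGF bound $\E[e^{X_{z,j}}]\le 1+Cq$ with $q:= c\sqrt{M_L/T_1}\asymp L^{(\alpha-2\beta)/2}$, obtained from the geometric-type tail $\P(X_{z,j}\ge k)\le (cq)^k$. This tail comes from: (i) the local CLT bound $\P(S^{z,j}_{T_i}=0)\le c/\sqrt{T_i}$, giving $\E[X_{z,j}]\le cq$; and (ii) the strong Markov property of $S^{z,j}$ at each sampled return to $0$, giving $\P(X_{z,j}\ge k+1\mid X_{z,j}\ge k)\le cq$. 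Since $|\eta|\le\delta\log L$ and the scale relation $\alpha<2\beta$ from \eqref{e:relationscales} forces $q\log L\to 0$, the product MGF stays bounded and Chernoff yields $\P_\eta(N_G<M_L/2)\le 2e^{-M_L/2}$.

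The main obstacle is precisely this Chernoff step: a naïve first-moment/Markov argument would give only polynomial-in-$L$ control on $M_L-N_G$, which is far too weak for the desired stretched-exponential bound. Exponential concentration crucially exploits both the independence of the $X_{z,j}$'s across particles and the smallness of the per-particle MGF, the latter hinging on $q\to 0$, i.e., on the scale relation $\alpha<2\beta$ dictated by \eqref{e:relationscales}. Choosing, e.g., $\delta<(\alpha-\beta)/(2|\log p_*|)$ and combining the two exponential bounds finishes the proof with $\varepsilon:=(\alpha-\beta)/2$.
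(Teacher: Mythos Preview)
Your overall strategy matches the paper's: use the Markov property at each $T_i$ together with Lemma~\ref{l:permeability} to get a per-attempt success probability $p=L^{-\delta|\log p_*|}$ on the event $\{\eta_{T_i}(0)=0\}$, then separately control the times when the origin is occupied. The argument is essentially correct, but two points deserve comment.

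First, the intermediate inequality $\P_\eta(\cap_i B_i^c)\le\E_\eta[(1-p)^{N_G}]$ does not follow from the ``iteration'' you sketch. After one step you obtain $\E\bigl[\prod_{i<M_L-1}\mathbbm{1}_{B_i^c}\cdot(1-p\mathbbm{1}_{G_{M_L-1}})\bigr]$, but $G_{M_L-1}$ is only $\mathcal{F}_{T_{M_L-1}}$-measurable, so the factor cannot be pulled through the next conditioning; in general such product comparisons $\E[\prod Y_i]\le\E[\prod a_i]$ with $\E[Y_i\mid\mathcal{F}_i]\le a_i$ and $a_i\in\mathcal{F}_i$ are false. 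What \emph{does} follow is that $M_k:=\prod_{i<k}\mathbbm{1}_{B_i^c}\big/\prod_{i<k}(1-p\mathbbm{1}_{G_i})$ is a nonnegative $(\mathcal{F}_{T_k})$-supermartingale with $\E[M_{M_L}]\le 1$, yielding $\P_\eta\bigl(\cap_i B_i^c,\,N_G\ge M_L/2\bigr)\le(1-p)^{M_L/2}$. This is precisely the bound you need after splitting on $\{N_G\ge M_L/2\}$, so the gap is easily patched.

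Second, your Chernoff treatment of $\P_\eta(N_G<M_L/2)$ is correct but considerably more elaborate than what the paper does. The paper instead bounds the stronger event $\{\exists\,t\in[0,L^\alpha]:\eta_t(0)>0\}$ directly: since every particle starts at distance at least $\ell_L\asymp L^{\beta'}$ from the origin, a union bound plus the standard Gaussian-type maximal inequality for simple random walk gives probability at most $|\eta|\cdot c\,e^{-c^{-1}L^{2\beta'-\alpha}}$, which is stretched-exponential by the scale relation $2\beta'>\alpha$ in \eqref{e:relationscales}. Your return-count argument has the virtue of using only $\eta(0)=0$ rather than the full vacant interval $[-\ell_L,\ell_L]$, and relies on $\alpha<2\beta$ instead; but in the present lemma the gap hypothesis is available, and the paper exploits it for a one-line estimate in place of your MGF computation.
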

\begin{proof}
For $p_*$ is as in Lemma~\ref{l:permeability},
fix $\delta > 0$ such that $\delta \log \frac{1}{p_{*}} < \alpha - \beta$.
Fix $\eta$ with $|\eta| \le \delta \log L$, $\eta(z) = 0$ for all $z \in [-\ell_L, \ell_L]$. 

Put $\eta_t(x) := N(x,t)$ and use the Markov property to write, for $k \ge 0$,
\begin{align}\label{prlemcrossfinitetraps1}
& \P_\eta \left( \bigcap_{i=0}^{k+1} \left(G^{(0, T_i)}_{T_1} \cap \Lambda^{(0,T_i)}_{T_1}  \right)^c \cap \{\eta_{T_{i+1}}(0) = 0\} \right) \nonumber\\
 \le \; & \E_\eta \left[ \prod_{i=0}^{k} \mathbbm{1}_{ \left(G^{(0,T_i)}_{T_1} \cap \Lambda^{(0,T_i)}_{T_1}\right)^c \cap \{\eta_{T_{i+1}}(0) = 0\}} \; \P_{\eta_{T_{k+1}}} \left( (G_{T_1} \cap \Lambda_{T_1})^c \right) \right].
\end{align}
Since $|\eta_{T_{k+1}}| = |\eta| \le \delta \log L$ and 
$\eta_{T_{k+1}}(0) = 0$ inside the integral,
by Lemma~\ref{l:permeability} we may bound \eqref{prlemcrossfinitetraps1} from above by
\begin{equation}\label{prlemmacrossfinitetraps2}
\left(1-L^{\delta \log p_{*}} \right) \, \P_\eta \left(\bigcap_{i=0}^{k} \left(G^{(0, T_i)}_{T_1} \cap \Lambda^{(0,T_i)}_{T_1} \right)^c \cap \{\eta_{T_i}(0)=0\} \right).
\end{equation}
We conclude by induction that
\begin{align}\label{prlemmacrossfinitetraps3}
\P_\eta \left( \bigcap_{i=0}^{\lfloor M_L \rfloor - 1 } \left(G^{(0,T_i)}_{T_1} \cap \Lambda^{(0,T_i)}_{T_1} \right)^c \cap \{\eta_{T_i}(0) = 0\} \right)
& \le \left(1-L^{\delta \log p_{*}}\right)^{\lfloor M_L \rfloor}\nonumber\\
& \le c e^{- \frac{1}{c} L^{\varepsilon_*}}
\end{align}
where $\varepsilon_* := \alpha - \beta + \delta \log p_{*} > 0$ by our choice of $\delta$.
Now, using standard random walk estimates as in the proof of Lemma~\ref{l:noparticles}, we obtain
\begin{equation}\label{prlemmacrossfinitetraps4}
\P_\eta \left( \exists\, t \in [0, L^\alpha] \colon \eta_t(0) > 0 \right) \le c e^{-c^{-1}L^{\varepsilon'}}
\end{equation}
for some $\varepsilon' > 0$, so we may take $\varepsilon := \varepsilon' \wedge \varepsilon_*$.
\end{proof}

Finally, we gather all results of this section to prove Proposition~\ref{p:crossingtraps}.
\begin{proof}[Proof of Proposition~\ref{p:crossingtraps}]
Note that,
if $X_- \ge -L^\beta+1$, then $\Lambda^{(X_-,T_i)}_{T_1} \subset \{\bar{X}^{(X_-, T_i)}_{T_1} \ge L^\beta \}$. 
Therefore, by Lemma~\ref{l:estimateE-}, it is enough to show that
\begin{equation}\label{e:crossingtraps_reduced}
\P^{L^{-\frac{1}{16}}} \left( \bigcap_{i \in [0,M_L-1]} \left( G^{(X_-, T_i)}_{T_1} \cap \Lambda^{(X_-,T_i)}_{T_1} \right)^c \cap \{X_- \ge -L^\beta + 1 \}  \right) \le ce^{-c^{-1}(\log L)^2}.
\end{equation}
By a union bound and translation invariance, the left-hand side of \eqref{e:crossingtraps_reduced} is at most
\begin{align}\label{prpropcrosstraps0}
L^\beta \P^{L^{-\frac{1}{16}}} \left( \bigcap_{i \in [0,M_L-1]} \left( G^{(0,T_i)}_{T_1} \cap \Lambda^{(0,T_i)}_{T_1} \right)^c \cap \cE_L  \right)
\end{align}
where $\cE_L := \{N(z,0) = 0 \;\forall\, z \in [-\ell_L, \ell_L] \}$.

Recalling the definition of $T_i$, $\ell_L$ in \eqref{e:defT}, we note that, since all our random walks are $1$-Lipschitz, 
there exists $c_1 > 0$ such that the indicator functions of $G^{(0, T_i)}_{T_1}, \Lambda^{(0, T_i)}_{T_1}$ and $\cE_L$ 
are functionals of $U_A, N(A)$ with $A := [-c_1 L^\beta, c_1 L^\beta] \times [0,L^\alpha] \cap \Z^2$.

Let $B := \Z \setminus [-(c_1 +1)L^\beta, (c_1 + 1) L^\beta]$, put
\begin{equation}
\widehat{N}_L := \sum_{z \in B} \sum_{i \le N(z,0)} 
\mathbf{1}_{\{ \exists s \in [0,L^\alpha] \colon\, S^{z,i}_s \in [-c_1 L^\beta, c_1 L^{\beta}]\} }
\end{equation}
and, analogously to \eqref{defetaB},
\begin{equation}\label{defNB_prpropcrosstraps}
\eta^B(x) := \left\{\begin{array}{ll}
N(x,0) & \text{ if } x \notin B,\\
0 & \text{ otherwise.}
\end{array} \right.
\end{equation}
Lemmas~\ref{l:localeventslocalparticles} and \ref{l:crossfinitetraps} imply that
\begin{align}\label{prpropcrosstraps1}
& \P^{L^{-\frac{1}{16}}} \left( \bigcap_{i=0}^{\lfloor M_L \rfloor - 1 } \left(G^{(0, T_i)}_{T_1} \cap \Lambda^{(0,T_i)}_{T_1} \right)^c \cap \cE_L \right) \nonumber\\
& \qquad \quad \le \P_{L^{-\frac{1}{16}}} \left( \widehat{N}_L > 0 \right)
+ \E^{L^{-\frac{1}{16}}} \left[ \mathbbm{1}_{\cE_L} \P_{\eta^B} \left( \bigcap_{i=0}^{\lfloor M_L \rfloor - 1 } \left(G^{(0,T_i)}_{T_1} \cap \Lambda^{(0,T_i)}_{T_1} \right)^c \right) \right] \nonumber\\
& \qquad \quad \le \P^{L^{-\frac{1}{16}}} \left( \widehat{N}_L > 0 \right) + \P^{L^{-\frac{1}{16}}} \left( |\eta^B| > \delta \log L \right) + c e^{-c^{-1}L^\varepsilon}.
\end{align}
Reasoning as in the proof of Lemma~\ref{l:noparticles} (see \eqref{defNLx}--\eqref{e:estimparamNLx}),
we obtain
\begin{equation}\label{prpropcrosstraps2}
\P^{L^{-\frac{1}{16}}} \left( \widehat{N}_L > 0 \right) \le c e^{-c^{-1}L^{2\beta - \alpha}},
\end{equation}
while, since $|\eta^B|$ has under $\P^{L^{-\frac{1}{16}}}$ a Poisson law with
parameter at most $cL^{-(1/16-\beta)}$,
\begin{equation}\label{prpropcrosstraps3}
\P^{L^{-\frac{1}{16}}} \left( |\eta^B| > \delta \log L \right) \le \left( c L^{-(1/16 - \beta)} \right)^{\delta \log L} \le c e^{-c^{-1}(\log L)^2}.
\end{equation}
Combining \eqref{prpropcrosstraps0}--\eqref{prpropcrosstraps3}, we obtain \eqref{e:crossingtraps_reduced} and finish the proof.
\end{proof}

                                                                                                                                                                                                                                                                                                                                                                                                                                                                                                                                                                                                                                                                                                                                                                                                                                                                                                                                                                                                                                                                                                                                                                      \subsection{Perturbations of impermeable systems}
\label{ss:trigger_imperm}
In this section, we assume $q_0 = 0$.
As already mentioned, the main strategy in the proof of Theorem~\ref{t:triggerimperm}
is a comparison with an infection model, which we now describe.

Recall the random walks $S^{z,i}$ from Section~\ref{s:construction}.
Define recursively a random process $\xi(z,i,n) \in \{0,1\}$, $z \in \Z, i \in \N, n \in \N$
by setting
\begin{align}\label{e:initconf}
\begin{aligned}
  & \xi(z,i,0) = 1 \quad \text{ if } z \ge 0, z \in 2 \Z \text{ and } i \leq N(z,0),\\
  & \xi(z,i,0) = 0 \quad \text{ otherwise,}
\end{aligned}
\end{align}
and, supposing that $\xi(z,i,n)$ is defined for all $z \in \mathbb{Z}$, $i \in \N$,
\begin{equation}\label{e:defeta}
  \xi(z,i,n+1) = \left\{
\begin{array}{ll}
  1 & \begin{array}{l}
  \text{if } i \le N(z,0) \text{ and} \\  \exists \, z' \in \Z, i' \in \N \text{ with } \eta(z',i',n) = 1, S^{z',i'}_n = S^{z,i}_n,
  \end{array}\\
0 & \text{ otherwise.}
\end{array}\right.
\end{equation}
The interpretation is that, if $\xi(z,i,n) = 1$, then 
the particle $S^{z,i}$ is \emph{infected} at time $n$,
and otherwise it is \emph{healthy}.
Then \eqref{e:defeta} means that, whenever a group of particles shares a site at time $n$, 
if one of them is infected then all will be infected at time $n + 1$.

We are interested in the process $\bar{X} = (\bar{X}_n)_{n \in \Z_+}$ defined by 
\begin{equation}\label{e:defbarX}
  \bar{X}_n = \min \{S^{z,i}_n \colon\, z\in \Z, i \leq N(z,0) \text{ and } \xi(z,i,n) = 1\},
\end{equation}
i.e., $\bar{X}_n$ is the leftmost infected particle at time $n$.
We call $\bar{X}$ the \emph{front of the infection}.

Note that, by \eqref{e:initconf} and since $q_0=0$, all infected particles live on $2\Z$.
In particular, $\bar{X}_n \in 2\Z$ for all $n \ge 0$.
This implies the following.
\begin{lemma}\label{l:monot_infec}
If $p_\bullet = 0$, then $X_n \le \bar{X}_n$ for all $n \ge 0$.
\end{lemma}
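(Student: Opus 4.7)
The plan is to prove this by induction on $n$, using two simple facts: (a) a parity invariance for both $X_n$ and $\bar{X}_n$, and (b) the front $\bar{X}_n$ can decrease by at most one per step. The whole argument is rather short; the key is to exploit that whenever the random walker catches up to the front, it must jump left (because $p_\bullet = 0$ and there is an infected, hence actually present, particle at its site).

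First I would record the parity observation: since $X_0 = 0$ and $X$ jumps by $\pm 1$, we have $X_n \equiv n \pmod 2$. For $\bar{X}$, the initial infected particles sit at even sites by \eqref{e:initconf}, and since $q_0 = 0$ each random walk $S^{z,i}$ jumps by $\pm 1$ at every step, so every infected particle at time $n$ has parity $n \pmod 2$; thus $\bar{X}_n \equiv n \pmod 2$ whenever it is finite. (If there are no infected particles initially then $\bar{X}_n = +\infty$ for all $n$ and there is nothing to prove; otherwise $\bar{X}_n$ stays finite since an infected particle remains infected, as can be seen by taking $(z',i') = (z,i)$ in \eqref{e:defeta}.)

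Next I would establish the one-step lower bound $\bar{X}_{n+1} \ge \bar{X}_n - 1$. Any particle $(z,i)$ infected at time $n+1$ either was already infected at time $n$, or shared a site with an infected particle at time $n$; in either case its position at time $n$ is $\ge \bar{X}_n$, so its position at time $n+1$ is $\ge \bar{X}_n - 1$. Taking the minimum over such particles gives the bound.

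For the base case, $X_0 = 0$ and $\bar{X}_0 \ge 0$ directly from \eqref{e:initconf}. For the inductive step, assume $X_n \le \bar{X}_n$ and split into two cases. If $X_n < \bar{X}_n$, then by the common parity, $X_n \le \bar{X}_n - 2$, so $X_{n+1} \le X_n + 1 \le \bar{X}_n - 1 \le \bar{X}_{n+1}$. If $X_n = \bar{X}_n$, then there is an infected particle at site $X_n$ at time $n$, which in particular means $N(X_n,n) \ge 1$; with $p_\bullet = 0$ the walker deterministically jumps left, giving $X_{n+1} = X_n - 1 = \bar{X}_n - 1 \le \bar{X}_{n+1}$. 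This closes the induction. There is really no substantial obstacle here; the only point that needs care is remembering the parity, without which the strict inequality $X_n < \bar{X}_n$ would leave just enough room for $X$ to overtake the front in one step.
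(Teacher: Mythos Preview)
Your proof is correct and follows essentially the same approach as the paper's: both arguments rest on the parity alignment $X_n - \bar{X}_n \in 2\Z$, the fact that both processes move by at most one per step, and the observation that when $X_s = \bar{X}_s$ the walker sees a particle and must step left since $p_\bullet = 0$. Your version is simply more explicit, in particular in verifying $\bar{X}_{n+1} \ge \bar{X}_n - 1$ and handling the degenerate case of no initial infected particles.
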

\begin{proof}
Since the processes are one-dimensional, proceed by nearest-neighbour jumps,
are ordered at time $0$ and the difference in their positions lies in $2\Z$, 
we only need to consider what happens at times $s$ when $X_s = \bar{X}_s$.
For such times, $X_{s+1} = X_s - 1$ since $p_\bullet = 0$,
and thus $X_{s+1} \le \bar{X}_{s+1}$.
\end{proof}
The advantage of the comparison above becomes clear in light of the following.
\begin{proposition}  \label{p:infection}
  For any $\hat{\rho} > 0$, there exist $\hat{v}<0$, $c > 0$ such that
  \begin{equation}
    \P^{\hat{\rho}} \left(\bar{X}_L > \hat{v} L \right) \leq c\exp \left\{ - (\log L)^{3/2}/c \right\} \;\;\; \forall\; L \in \N.
  \end{equation}
\end{proposition}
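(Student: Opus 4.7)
The plan is to establish a local base estimate on the front advancement and then amplify it via a multiscale argument to reach the stated $\exp\{-(\log L)^{3/2}/c\}$ bound.

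\textbf{Structural setup.} I start from three basic observations. First, since $q_0=0$ and the initial infection lives on $2\Z$, all infected particles at time $n$ occupy positions of parity $n\bmod 2$, so $\bar X_{n+1}-\bar X_n\in\{-1,+1\}$. Second, the infection is monotone in time (once infected, a particle stays infected forever) and monotone in the environment: adding particles to $\omega$ only enlarges the infection cloud, so the indicator $\mathbbm{1}_{\{\bar X_L>\hat v L\}}$ is non-increasing in the sense of Definition~\ref{d:monotone}, and its probability is non-increasing in $\hat\rho$. Third, only particles starting at even $z\ge 0$ are initially infected, while those starting at even $z<0$ form an independent Poisson reservoir of healthy targets that fuels the leftward propagation of the front.

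\textbf{Base estimate and renewal structure.} I will show that there exist constants $\hat L_0=\hat L_0(\hat\rho)$ and $p_0=p_0(\hat\rho)\in(0,1)$ such that, at any given time, the front advances by at least one unit to the left within $\hat L_0$ time steps with probability at least $p_0$, uniformly in the state of the environment ahead of the front. The heuristic is standard: within a time window of size $\hat L_0\sim 1/\hat\rho$, an infected particle near the front collides with a healthy particle from the Poisson reservoir with positive probability (by simple random walk meeting estimates), and the newly-infected particle then has positive probability to move one step further left. Using the monotonicity in the environment, I can restart the environment ahead of the front from a fresh Poisson configuration at the cost of a controlled multiplicative error, producing a renewal structure that delivers linear speed $\hat v<0$ for $\bar X$ in expectation.

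\textbf{Amplification and main obstacle.} The base estimate is then fed into the multiscale renormalization scheme of \cite{manyRW} (cf.\ Corollary~3.11 there and its use in the proofs of Theorems~\ref{t:ballisticity_permeable}--\ref{t:ballisticity_impermeable}), which by construction converts such a base input into the stretched-logarithmic bound $\exp\{-(\log L)^{3/2}/c\}$. The primary obstacle is that $\bar X$ is not a local functional of the environment: the infection at time $L$ can arise from a long chain of collisions stretching over arbitrarily large space-time regions, so $\{\bar X_L>\hat v L\}$ is not a priori measurable in any bounded box. To address this I will introduce a truncated front $\bar X^{\mathrm{trunc}}$ in which only infection chains staying inside a box $[-cL,cL]\times[0,L]$ are allowed to propagate; monotonicity yields $\bar X_L\le \bar X^{\mathrm{trunc}}_L$, and Poisson-type estimates analogous to Lemma~\ref{l:noparticles} bound the contribution of chains escaping the box. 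Matching this truncation error with the renormalization exponent, and verifying that the truncated front satisfies the monotonicity and locality hypotheses of the renormalization scheme, is the most delicate point of the argument.
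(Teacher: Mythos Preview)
The paper's proof is a one-line citation: the result is exactly Proposition~1.2 of \cite{manyRW} after mapping $2\Z$ to $\Z$ and reflecting. All the work you are sketching---base estimate, renewal, renormalization for the infection front---was already carried out in that companion paper, and the present paper simply invokes it.

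Your outline is in the right spirit (indeed it roughly mirrors how \cite{manyRW} handles the infection model), but as written it has two genuine gaps. First, the renewal step is not justified: you assert that one can ``restart the environment ahead of the front from a fresh Poisson configuration at the cost of a controlled multiplicative error,'' but the environment consists of random walk trajectories with long-range temporal correlations, so the configuration of healthy particles ahead of the front at time $t$ is \emph{not} conditionally Poisson given the past of the front. Getting a uniform lower bound on the advance probability that survives this conditioning is precisely the non-trivial input, and you have not supplied it.

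Second, you propose to feed the base estimate into Corollary~3.11 of \cite{manyRW} as used in the proofs of Theorems~\ref{t:ballisticity_permeable}--\ref{t:ballisticity_impermeable}. That corollary is formulated for $(0,L,H)$-crossings, i.e.\ trajectories whose increments are determined by a \emph{local} function $g$ of the environment at the current space-time position (cf.\ \eqref{e:defg}--\eqref{e:crossing}). The infection front $\bar X$ is not such an object: its increment at time $n$ depends on the full set of currently infected particles, which is a global functional of the environment history. Your truncation to a box addresses spatial measurability but does not convert $\bar X$ into a local crossing, so Corollary~3.11 does not apply off the shelf. In \cite{manyRW} the infection front is handled by a separate (though parallel) renormalization argument tailored to it; you would need to reproduce that, not just invoke the RWDRE version.
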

\begin{proof}
Follows from Proposition~1.2 of \cite{manyRW}
once we map $2\Z$ to $\Z$ and apply reflection symmetry.
\end{proof}
We are now ready to finish the:
\begin{proof}[Proof of Theorem~\ref{t:triggerimperm}]
Fix $\hat{\rho} > 0$ and $\hat{L} \in \N$.
Suppose first that $p_\bullet =0$.
By Lemma~\ref{l:monot_infec} and Proposition~\ref{p:infection},
there exist $\hat{v}<0$, $c>0$ independent of $\hat{L}$ such that
\begin{align}\label{e:prbalimperm1}
\P^{\hat{\rho}} \left( X_{\hat{L}} > \hat{v} \hat{L} \right) \le \P^{\hat{\rho}} \left( \bar{X}_{\hat{L}} > \hat{v} \hat{L} \right) \le c e^{-(\log \hat{L})^{3/2}/c}.
\end{align}
Note now that, since $X_{\hat{L}}$ is supported in a finite space-time box,
the probability in the left-hand side of \eqref{e:prbalimperm1} is a continuous function
of $p_\bullet$.
Thus we can find $p_\star >0$ such that, if $p_\bullet \le p_\star$,
then \eqref{e:prbalimperm1} holds with $c$ replaced by $2c$,
concluding the proof.
\end{proof}



\section{Regeneration: proof of Theorem~\ref{t:limits_permeable}}
\label{s:reg_lowdensity}
In this section, we extend the results of Section 4 of \cite{HHSST14} to the case $v_\bullet < v_\circ$
and give the proof of Theorem~\ref{t:limits_permeable} under the conditions of item $a)$.

Fix $\rho > 0$. We assume that \eqref{e:BAL} holds with $v_\star > 0$ and some $\gamma > 1$.
We assume additionally that $p_\bullet > 0$.
In the sequel, we abbreviate $\P = \P^\rho$.

\begin{figure}[htbp]
\centering
\begin{tikzpicture}[scale=.4, font=\small]
\draw[fill, color=gray!40!white] (.5,1) -- (.5,5) -- (10,5) -- (8,.5) -- (.5,.5);
\draw[fill, color=gray!40!white] (10,5) -- (12,9.5) -- (20.5,9.5) -- (20.5,5) -- (10,5);
\foreach \x in {1,...,20} {
\foreach \y in {1,...,9} {
\draw[fill, color=gray!20!white] (\x,\y) circle [radius=0.05];}}
\foreach \y in {1,...,5} {
\pgfmathparse{9 - floor(2*(5-\y)/5)}
\xdef\z{\pgfmathresult}
\foreach \x in {1,...,\z} {
\draw[fill] (\x,\y) circle [radius=0.08];}}
\foreach \y in {5,...,9} {
\pgfmathparse{10 - floor(2*(5-\y)/5)}
\xdef\z{\pgfmathresult}
\foreach \x in {\z,...,20} {
\draw (\x,\y) circle [radius=0.08];}}
\node[below right] at (10,5) {$y$};
\end{tikzpicture}
\caption{An illustration of the sets $\um(y)$ (represented by white circles) and
\protect\rotatebox[origin=c]{180}{$\angle$}$(y)$ (represented by filled black circles),
with $y=(x,n) \in \Z^2$.}
\label{f:cones}
\end{figure}

Define $\bar v = \tfrac13v_\star$.
For $x \in \R$ and $n \in \Z$, let $\um(x,n)$ be the cone in the first quadrant based
at $(x,n)$ with angle $\bar v$, i.e.,
\begin{equation}
\um(x,n) = \um(0,0) + (x,n), \text{ where } \um(0,0) = \{(x,n) \in \mathbb{Z}_+^2; x \geq \bar v n\},
\end{equation}
and $\tres(x,n)$ the cone in the third quadrant based at $(x,n)$ with angle $\bar v$, i.e.,
\begin{equation}
\tres(x,n) = \tres(0,0) + (x,n), \text{ where } \tres(0,0) = \{(x,n) \in \mathbb{Z}_-^2\colon\,x < \bar v n\}.
\end{equation}
(See Figure~\ref{f:cones}.) Note that $(0,0)$ belongs to $\um(0,0)$ but not to $\tres(0,0)$.

Fixed $y \in \Z^2$, define the following sets of trajectories in $W$:
\begin{equation}
\begin{aligned}
W_y^\um &= \text{ trajectories that intersect $\um(y)$ but not $\tres(y)$},\\
W_y^\tres &= \text{ trajectories that intersect $\tres(y)$ but not $\um(y)$},\\
W_y^\treze &= \text{ trajectories that intersect both $\um(y)$ and $\tres(y)$}.
\end{aligned}
\end{equation}
Note that $W^\um_y$, $W^\tres_y$ and $W^\treze_y$ form a partition of $W$. We write $Y_n$
to denote $Y^{0}_n$. For $y\in \Z^2$, define the sigma-algebras
\begin{equation}\label{e:sigmaalgebrastraj}
\mathcal{G}^{I}_{y} = \sigma \left( \omega(A) \colon\, A \subset W^{I}_{y}, A \in \cW  \right),
I = \um,\tres,\treze,
\end{equation}
and note that these are jointly independent under $\P$. Define also the sigma-algebras
\begin{equation}\label{e:sigmaalgebraunif}
\begin{aligned}
\mathcal{U}^{\um}_{y} & = \sigma \left( U_z \colon\, z \in \um(y) \right),\\
\mathcal{U}^{\tres}_{y} & = \sigma \left( U_{z} \colon\, z \in \tres(y) \right),
\end{aligned}
\end{equation}
and set
\begin{equation}
\label{e:sigmaalgebraFxt}
\mathcal{F}_{y} = \mathcal{G}^{\tres}_{y} \vee \mathcal{G}^{\treze}_{y} \vee \mathcal{U}^{\tres}_{y}.
\end{equation}

Next, define the \emph{record times}
\begin{equation}
\label{e:records}
R_k = \inf \{n\in{\mathbb{Z}_+}\colon\, X_n \ge (1-\bar{v})k + \bar{v} n \}, \qquad k \in \N,
\end{equation}
i.e., the time when the walk first enters the cone
\begin{equation}\label{e:defcones}
\um_k := \um((1-\bar{v})k,0).
\end{equation}
Note that, for any $k \in \N$, $y \in \um_k$ if and only if $y+(1,1) \in \um_{k+1}$. Thus $R_{k+1} \ge R_k+1$,
and $X_{R_k+1}-X_{R_k}=1$ if and only if $R_{k+1} = R_k+1$.

Define a filtration $\mathcal{F} = (\mathcal{F}_k)_{k \in \N}$ by setting
\begin{equation}
\label{e:filtration}
\mathcal{F}_k = \Big\{B \in \sigma(\omega,U) \colon\, \,
\forall \, y \in \Z^2, \, \exists \, B_{y} \in \mathcal{F}_{y}
\text{ s.t.}\, B \cap \{Y_{R_k} = y\} = B_{y} \cap \{Y_{R_k} = y\} \Big\},
\end{equation}
i.e., $\mathcal{F}_k$ is the sigma-algebra generated by $Y_{R_k}$, all $U_z$ with $z \in \tres(Y_{R_k})$
and all $\omega(A)$ such that $A \subset W^\tres_{Y_{R_k}} \cup W^\treze_{Y_{R_k}}$.
In particular, $(Y_i)_{0 \le i \le R_k} \in \mathcal{F}_k$.

Finally, define the event
\begin{equation}
\label{e:Axt}
A^{y} = \big\{Y^{y}_i \in \um(y) \,\,\forall\,i \in {\mathbb{Z}_+} \big\},
\end{equation}
in which the walker remains inside the cone $\um(y)$,
the probability measure
\begin{equation}
\mathbb{P}^{\um} (\cdot) = \mathbb{P} \left(~\cdot~ {\big |}
~\omega\big(W^{\treze}_{0}\big)=0,\, A^{0}\right),
\label{e:pmarrom}
\end{equation}
the \emph{regeneration record index}
\begin{equation}
\label{e:regrec}
\mathcal{I} = \inf\Big\{ k \in \N \colon\, \omega\big(W^{\treze}_{Y_{R_k}}\big)= 0,\,
A^{Y_{R_k}} \text{ occurs } \Big\}
\end{equation}
and the \emph{regeneration time}
\begin{equation}
\label{e:regtime}
\tau = R_{\mathcal{I}}.
\end{equation}

The following two theorems are our key results for the regeneration time.
\begin{theorem}
\label{t:regeneration}
Almost surely on the event $\{\tau < \infty\}$, the process $(Y_{\tau+i} - Y_\tau)_{i \in\Z_+}$
under either the law $\mathbb{P} (~\cdot \mid \tau, (Y_i)_{0\le i \le \tau})$ or $\mathbb{P}^{\um}
(~\cdot \mid \tau, (Y_i)_{0 \le i \le \tau})$ has the same distribution as that of $(Y_i)_{i \in\Z_+}$
under $\mathbb{P}^\um(\cdot)$.
\end{theorem}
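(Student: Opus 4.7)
The plan is to decompose according to the value of $(\mathcal{I}, Y_\tau) = (k, y)$ and, for each $y$, exploit the independence structure of $\omega$ and $U$ with respect to the trajectory partition $W = W^\um_y \sqcup W^\tres_y \sqcup W^\treze_y$ together with the space-time split $\um(y) \sqcup \tres(y)$. Decompose $\omega = \omega^\um_y + \omega^\tres_y + \omega^\treze_y$ into three independent Poisson point processes and write $U = (U^\um_y, U^\tres_y)$; under $\P$ these five objects are jointly independent. The future walk $(Y^y_i)_{i \ge 0}$ is a deterministic function of $(\omega^\um_y, \omega^\treze_y, U^\um_y)$, and on $\{\omega(W^\treze_y) = 0\}$ both the walk and the event $A^y$ reduce to functions of $(\omega^\um_y, U^\um_y)$ alone.

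The heart of the argument rests on two elementary geometric observations valid on $\{Y_{R_k} = y\}$ for every $j < k$: from $Y_{R_\ell} \in \um_\ell$ for $\ell \in \{j,k\}$ one obtains $X_{R_k} - X_{R_j} \ge \bar{v}(R_k - R_j)$, which yields the inclusions $\tres(Y_{R_j}) \subset \tres(y)$ and $\um(y) \subset \um(Y_{R_j})$. The first implies that any trajectory contributing to $\omega(W^\treze_{Y_{R_j}})$ lies in $W^\tres_y \cup W^\treze_y$, so $\omega(W^\treze_{Y_{R_j}})$ is measurable in $\sigma(\omega^\tres_y, \omega^\treze_y) \subset \mathcal{F}_k$. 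The second implies that, on $A^y$, the walker never exits $\um(Y_{R_j})$ at any $n > R_k$, so $\{A^{Y_{R_j}}\text{ fails}\}$ is determined by $(Y_i)_{0 \le i \le R_k}$ and is also $\mathcal{F}_k$-measurable. Consequently, on $\{Y_{R_k}=y\}$ we have the factorisation $\{\mathcal{I} = k\} = D_k(y) \cap A^y$, where $D_k(y) \in \mathcal{F}_k$ encodes the failure conditions for $j<k$ together with $\{\omega(W^\treze_y)=0\}$, while $A^y$ restricted to $\{\omega(W^\treze_y)=0\}$ lies in $\sigma(\omega^\um_y, U^\um_y)$ and is therefore independent of $\mathcal{F}_k$.

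For bounded measurable $f$ and any $B \in \sigma(\tau, (Y_i)_{0 \le i \le \tau})$, independence then yields
\begin{align*}
& \E\!\left[f\bigl((Y_{\tau+i}-Y_\tau)_{i\ge 0}\bigr) \mathbbm{1}_{B \cap \{\mathcal{I}=k, Y_{R_k}=y\}}\right] \\
& \qquad = \P\!\left(B \cap \{Y_{R_k}=y\} \cap D_k(y)\right) \cdot \E\!\left[f\bigl((Y^y_i-y)_{i\ge 0}\bigr) \mathbbm{1}_{A^y} \,\Big|\, \omega(W^\treze_y)=0\right].
\end{align*}
Translation invariance and the definition of $\P^\um$ identify the second factor with $\P(A^0 \mid \omega(W^\treze_0) = 0) \cdot \E^\um[f]$; summing over $(k,y)$ and comparing with the analogous computation for $f \equiv 1$ yields $\E[f(\text{future}) \mid \tau, (Y_i)_{[0,\tau]}] = \E^\um[f]$ a.s.\ on $\{\tau<\infty\}$. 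The identity under $\P^\um$ follows by the same argument, since the additional conditioning $\{\omega(W^\treze_0)=0, A^0\}$ splits, via the partition at $y$, into independent restrictions on the pieces $\omega^\um_y, \omega^\tres_y, \omega^\treze_y$ and on $U^\um_0$, and these factor out in the same way.

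I expect the main obstacle to be the rigorous verification of the $\mathcal{F}_k$-measurability of $\omega(W^\treze_{Y_{R_j}})$ and of $\{A^{Y_{R_j}}\text{ fails}\}$ despite the random base point $Y_{R_j}$; both are handled by slicing along $\{Y_{R_k}=y\}$ and invoking the cone inclusions above, but the bookkeeping requires some care because the ``crossing'' partition $W^\bullet_y$ is itself indexed by the random regeneration position.
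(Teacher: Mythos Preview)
Your proposal is correct and follows the standard regeneration-structure argument; the paper itself gives no independent proof here but simply states that the theorem ``is proved exactly as in \cite{HHSST14}'', and your decomposition along $\{\mathcal{I}=k,\,Y_{R_k}=y\}$ together with the Poisson/uniform independence across the partition $W = W^\um_y \sqcup W^\tres_y \sqcup W^\treze_y$ is precisely that argument. Your two geometric observations (that $\tres(Y_{R_j}) \subset \tres(y)$ and $\um(y) \subset \um(Y_{R_j})$ for $j<k$ on $\{Y_{R_k}=y\}$, which follow from $\kappa(Y_{R_\ell})=\ell$) are exactly what is needed to place the ``failure at stage $j$'' conditions into $\mathcal{F}_y$, and your treatment of the $\P^\um$ case---absorbing $\{\omega(W^\treze_0)=0,\,A^0\}$ into the $\mathcal{F}_y$-measurable factor on $A^y$---is also the standard one. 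The only cosmetic imprecision is writing $\P(B \cap \{Y_{R_k}=y\} \cap D_k(y))$ in the displayed identity: since $B \in \sigma(\tau,(Y_i)_{[0,\tau]})$ is not a priori $\mathcal{F}_y$-measurable, one should first replace $B$ by the $\mathcal{F}_y$-measurable event $B_y$ obtained by freezing $\tau=R_k$ on $\{Y_{R_k}=y\}$, but this is exactly the bookkeeping you flag in your final paragraph.
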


\newconstant{c:tailreg}

\begin{theorem}
\label{t:tailregeneration}
There exists a constant $\useconstant{c:tailreg} > 0$ such that
\begin{equation}
\E \left[e^{\useconstant{c:tailreg} (\log \tau)^\gamma} \right] < \infty
\end{equation}
and the same holds under $\mathbb{P}^\um$.
\end{theorem}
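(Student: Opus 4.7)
The plan is to bound $\mathbb{P}(\tau > L)$ by a stretched exponential of the form $c_1 e^{-c_2(\log L)^\gamma}$, from which the claimed integrability will follow. The proof decomposes into three components: verifying that $\mathbb{P}^\um$ is well-defined, establishing a conditional lower bound on the success probability at each regeneration attempt, and counting sufficiently many independent attempts using ballisticity.

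I would start with the well-definedness of $\mathbb{P}^\um$, i.e.\ showing $\mathbb{P}(\omega(W^\treze_0)=0, A^0)>0$. For the first factor, one estimates $\mu(W^\treze_0)<\infty$ via standard random walk hitting-probability bounds summed over starting positions: a trajectory from a site at distance $|z|$ from the origin must cross both cones, whose probability decays (for instance) polynomially in $|z|$, giving a summable contribution. Hence $\mathbb{P}(\omega(W^\treze_0)=0)=e^{-\rho\mu(W^\treze_0)}>0$. For $\mathbb{P}(A^0)>0$, I would use condition \eqref{e:BAL} with $v_\star>0$ together with the ellipticity $p_\bullet>0$: BAL with a suitable $L_0$ gives $X_n\ge v_\star n-L_0$ for all $n$ with probability bounded away from $0$; ellipticity permits forcing the walker onto a path staying in $\um(0)$ up to the time $n_0$ where the linear lower bound dominates $\bar v n$. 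Finally, since $v_\circ\ge v_\bullet$, Remark~\ref{r:monotone}(ii) implies $A^0$ is non-increasing in $\omega$, so removing trajectories in $W^\treze_0$ can only help: $\mathbb{P}(A^0\mid\omega(W^\treze_0)=0)\ge\mathbb{P}(A^0)>0$.

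Next, I would establish the core conditional identity: for each $k\ge 1$, on the event $\{\omega(W^\treze_{Y_{R_k}})=0\}\in\mathcal F_k$, one has
\begin{equation}
\mathbb{P}\bigl(A^{Y_{R_k}}\,\bigm|\,\mathcal F_k\bigr)=q_*:=\mathbb{P}\bigl(A^0\,\bigm|\,\omega(W^\treze_0)=0\bigr)>0.
\end{equation}
The reason is that on $A^{Y_{R_k}}\cap\{\omega(W^\treze_{Y_{R_k}})=0\}$ the walker's future after $R_k$ depends only on $\omega\cap W^\um_{Y_{R_k}}$ and on $(U_z)_{z\in\um(Y_{R_k})}$, all of which are independent of $\mathcal F_k$ and whose joint law is that of the unconditioned environment, by translation invariance. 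This is the same independence structure used in the regeneration theorem.

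The main step is the tail bound on $\mathcal I$. Let $\sigma_1<\sigma_2<\cdots$ be the ordered record indices in $\mathcal K=\{k:\omega(W^\treze_{Y_{R_k}})=0\}$. Conditionally on $\mathcal F_{\sigma_j}$ and $\{\sigma_j<\infty\}$, the success probability at the $j$-th good attempt is $q_*$. Since $A^{Y_{\sigma_j}}$ is a tail event, different attempts are not independent in $\mathcal F_{\sigma_j}$; I would circumvent this by defining a sub-sequence of \emph{independent} attempts: each time $A^{Y_{\sigma_j}}$ fails, the walker exits $\um(Y_{\sigma_j})$ at a finite (stopping) time $T_j$, after which I pick the next good record $\sigma_{j'}>T_j$ as the following attempt. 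The strong Markov property at record times combined with the conditional identity yields genuine independence of successes across this subsequence, each with probability $q_*$. To control the number of such attempts produced in a time window $[0,L]$, I would use BAL to show that: (i) with probability $\ge 1-c_1 e^{-c_2(\log L)^\gamma}$, at least $cL$ records $R_k$ occur by time $L$ (since $R_k\le Ck$ under a BAL-type event); and (ii) by a second-moment/concentration argument based on $\mu(W^\treze_0)<\infty$ and translation invariance along the walker's trajectory, a positive fraction of these records lie in $\mathcal K$. Hence at least $c''L$ truly independent attempts arise with stretched-exponential probability, each succeeding with probability $q_*$, so the probability that all fail is at most $(1-q_*)^{c''L}$. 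Combining with (i) and (ii) gives $\mathbb{P}(\tau>L)\le c_1 e^{-c_2(\log L)^\gamma}$, and the statement under $\mathbb{P}^\um$ follows because $\mathbb{P}^\um$-conditioning only adds a fixed constant factor to the total mass via Theorem~\ref{t:regeneration}.

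The main obstacle is the correlation structure of the family $\{\omega(W^\treze_{Y_{R_k}})=0\}_{k}$ and of the tail events $\{A^{Y_{R_k}}\}_k$: unlike in \cite{HHSST14}, the asymmetric drift condition $v_\bullet\le 0<v_\circ$ means that the walker's distribution under the conditioning $\omega(W^\treze_0)=0$ is stochastically larger (not equal) to the unconditioned one, so the usual monotone coupling that underpins the attempt-counting argument must be re-derived in the direction consistent with $\mathbb{P}^\um$; this is precisely where the non-increasing property of $X^y_n$ in $\omega$ from Remark~\ref{r:monotone}(ii) is essential, and must be used both to compare conditioned laws and to propagate the lower bound on $q_*$ past the stopping times $T_j$.
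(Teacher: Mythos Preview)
Your scheme has a genuine gap at the point you yourself flag as the main obstacle: step~(ii), where you assert that ``a positive fraction of these records lie in $\mathcal K=\{k:\omega(W^\treze_{Y_{R_k}})=0\}$'' via a second-moment or concentration argument. The events $\{\omega(W^\treze_{Y_{R_k}})=0\}$ are \emph{global} conditions on the trajectory field and are heavily correlated across $k$: a single trajectory in $\omega$ that drifts slowly to the right can lie in $W^\treze_{Y_{R_k}}$ for a long contiguous range of $k$'s, and the record points $Y_{R_k}$ themselves depend on $\omega$. You give no mechanism to decorrelate these events, and a second-moment bound, even if it could be made to work, would yield only polynomial (Chebyshev-type) tails rather than the stretched-exponential $e^{-c(\log L)^\gamma}$ you need. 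Furthermore, your iterative scheme (wait for the walker to exit $\um(Y_{\sigma_j})$, then pick the next $\sigma_{j'}\in\mathcal K$) does not come with any control on $\sigma_{j'}-\sigma_j$ conditional on $\mathcal F_{\sigma_j}$ and on the failure of $A^{Y_{\sigma_j}}$; without that, you cannot show that order $L$ independent attempts are available by time $L$.

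The paper takes a different route precisely to avoid working directly with the global condition $\omega(W^\treze_{Y_{R_k}})=0$. It introduces the \emph{local influence field} $h^T$ (Lemma~\ref{l:locinfl}) and defines a ``good record time'' through the four local conditions \eqref{e:good_record1}--\eqref{e:good_record4}, each checkable in a window of size $T'\asymp T^\epsilon$. Two modifications relative to \cite{HHSST14} are essential here because $v_\bullet<v_\circ$: condition~\eqref{e:good_record1} is imposed at every point of $\partial^+\mathcal P_{T'}(Y_{R_{k-T'}})$ (since $Y_{R_k}$ is not $\mathcal F_{k-T'}$-measurable), and condition~\eqref{e:good_record4} is stated for the auxiliary walker $\widetilde Y^k$ that ignores the trajectories in $\widetilde W_{Y_{R_{k-T'}}}$ and in $\tres(Y_{R_k}+(T'',T''))$, so that it becomes independent of the relevant past and can be lower-bounded via monotonicity by $\P(X_n\ge nv_\star\ \forall n)$. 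These local conditions satisfy a uniform conditional lower bound $\P(R_k\text{ is a g.r.t.}\mid\mathcal F_{k-T'})\ge cT^{\delta\log p_\bullet}$ and are $\mathcal F_{k+\bar c T'}$-measurable, which yields $e^{-c\sqrt T}$ for the probability of no good record time among the first $T$ (Proposition~\ref{p:manygrts}); one then checks that a g.r.t.\ forces $\tau\le T$ outside two further stretched-exponentially unlikely events. Your conditional identity for $A^{Y_{R_k}}$ is correct and is indeed the content of Theorem~\ref{t:regeneration}, but it does not by itself produce the attempt-counting you need; the localisation via $h^T$ and $\widetilde Y$ is the missing idea.
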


Theorem~\ref{t:regeneration} is proved exactly as in \cite{HHSST14}.
Theorem~\ref{t:tailregeneration} was proved in \cite{HHSST14} in the non-nestling case and in the case $v_\bullet\geq v_\circ$. 
In the following section, we will fill the remaining gap by showing that it also holds when $v_\circ > 0 \ge v_\bullet$.

We may now conclude the:
\begin{proof}[Proof of Theorem~\ref{t:limits_permeable}]
One may follow word for word the proof of Theorem~1.4 in \cite{HHSST14} (Section~4.3 therein).
\end{proof}


\subsection{Proof of Theorem~\ref{t:tailregeneration}}
\label{ss:prooftail}
In what follows, constants may depend on $v_\circ$, $v_\bullet$, $v_\star$ and $\rho$.

Define the \emph{influence field at a point $y \in \Z^2$} as
\begin{equation}
\label{e:hxt}
h(y) = \inf \Big\{ l \in {\mathbb{Z}_+}\colon\, \omega(W^{\treze}_y \cap W^{\treze}_{y + (l,l)}) = 0 \Big\}.
\end{equation}

\newconstant{c:h_xt1}
\newconstant{c:h_xt2}

\begin{lemma}[Lemma 4.3 of \cite{HHSST14}]
\label{l:hxt_exp}
There exist constants $\useconstant{c:h_xt1}, \useconstant{c:h_xt2} > 0$ (depending on
$v_\star, \rho$ only) such that, for all $y \in \Z^2$,
\begin{equation}
\label{e:h_xt_exp}
\mathbb{P}[h(y) > l] \leq \useconstant{c:h_xt1} e^{-\useconstant{c:h_xt2} l}, \qquad l\in{\mathbb{Z}_+}.
\end{equation}
\end{lemma}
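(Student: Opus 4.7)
The strategy is to bound a Poisson expectation by exploiting the geometry of the cones together with large-deviation estimates for the symmetric simple random walk. By the space-time translation invariance of $\P$ we may take $y=(0,0)$. Since $\omega$ is Poisson of intensity $\rho\mu$, Markov's inequality gives
\begin{equation}
\P[h(0)>l] \le \E\bigl[\omega(W^{\treze}_0 \cap W^{\treze}_{(l,l)})\bigr] = \rho\,\mu\bigl(W^{\treze}_0 \cap W^{\treze}_{(l,l)}\bigr),
\end{equation}
so it suffices to establish $\mu\bigl(W^{\treze}_0 \cap W^{\treze}_{(l,l)}\bigr) \le C e^{-c l}$. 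Dropping the two constraints that are not needed (intersection with $\um(0)$ and with $\tres(l,l)$), the right-hand side is at most $\mu\bigl(W_{\tres(0)} \cap W_{\um(l,l)}\bigr)$, i.e.\ the expected number of Poisson trajectories visiting both $\tres(0)$ and $\um(l,l)$.

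The geometric observation driving the bound is that any $(x_1,n_1) \in \tres(0)$ and $(x_2,n_2) \in \um(l,l)$ satisfy $n_1 \le 0 \le l \le n_2$ together with $x_1 < \bar v n_1$ and $x_2 \ge l + \bar v(n_2 - l)$. Hence
\begin{equation}
x_2 - x_1 > l(1-\bar v) + \bar v(n_2 - n_1),
\end{equation}
so the chord between the two visits has slope strictly larger than $\bar v$, with an extra positive excess proportional to $l$. Since trajectories in $W$ are $1$-Lipschitz and $\bar v < 1$, Cramér's theorem for the simple symmetric random walk furnishes a rate function $I(\bar v) > 0$ bounding the probability that a walk realizes such a chord between times $n_1$ and $n_2$ by $C e^{-c (l + (n_2 - n_1))}$ for suitable $c>0$; intuitively, the walk must beat its own drift over a time span of length $\ge l$ and, on top of that, gain a deterministic excess of order $l$.

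Writing $\mu = \sum_{z\in\Z} P_z$, I would decompose according to the starting height $z$ and to the first hitting time $n_1 \le 0$ of $\tres(0)$ and first subsequent hitting time $n_2 \ge l$ of $\um(l,l)$, applying the strong Markov property of $S^{z,1}$ at each hitting time. Contributions from starting heights $|z|$ much larger than $l$ are damped by the probability that the walk travels a distance $\gtrsim |z|$ just to reach either cone, which is itself exponential in $|z|$ by standard random-walk maximal inequalities. Summing the resulting geometric series over $z$, $n_1$ and $n_2 - l$ absorbs the polynomial prefactors into the exponential and yields the desired $C e^{-c l}$.

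The main obstacle is purely bookkeeping: one must arrange the triple sum over $z,n_1,n_2$ so that polynomial factors produced by the summation are dominated by the geometric excess and the slope large deviation. This is exactly the calculation carried out in the proof of Lemma~4.3 of \cite{HHSST14}, to which I would refer for the full details; the present setup differs only cosmetically (the value of $\bar v$) and requires no new ingredient, as the bound depends solely on $\bar v < 1$ and on the symmetry and nearest-neighbour character of the environment walks.
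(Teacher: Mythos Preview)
The paper does not give its own proof of this lemma: it is quoted verbatim as Lemma~4.3 of \cite{HHSST14} and used as a black box. Your proposal therefore goes further than the paper does, supplying a correct sketch of the standard argument (Markov's inequality on the Poisson count, reduction to $\mu\bigl(W_{\tres(0)}\cap W_{\um(l,l)}\bigr)$, the slope observation $x_2-x_1>l(1-\bar v)+\bar v(n_2-n_1)$, and a large-deviation bound for the symmetric walk summed over the relevant indices) before likewise deferring to \cite{HHSST14} for the bookkeeping; this matches the approach in that reference and is entirely appropriate here.
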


Set
\begin{equation}
\label{e:eps}
\delta = \frac{1}{4 \log\big( \tfrac{1}{p_{\bullet}} \big)}, \qquad
\epsilon = \frac{1}{4}(\useconstant{c:h_xt2} \delta \wedge 1),
\end{equation}
and put, for $T>1$,
\begin{equation}
T' = \lfloor T^\epsilon \rfloor,\qquad
T'' = \lfloor \delta \log T \rfloor.
\end{equation}

Define the \emph{local influence field at $(x,n)$} as
\begin{equation}
\label{e:hxt_local}
h^T(x,n) = \inf \big\{ l \in{\mathbb{Z}_+}\colon\, \omega( W^\um_{x-\lfloor(1-\bar{v}) \rfloor T',n} \cap W^{\treze}_{x,n}
\cap W^\treze_{x + l, n + l}) = 0 \big\}.
\end{equation}
Then we have the following.
\begin{lemma}[Lemma 4.4 of \cite{HHSST14}]
\label{l:locinfl}
For all $T > 1$ it holds $\P$-a.s.\ that
\begin{equation}
\label{e:locinfl}
\mathbb{P} \left( h^T(y) > l \;\middle|\; \mathcal{F}_{y-(\lfloor (1-\bar{v}) \rfloor T',0)} \right)
\le \useconstant{c:h_xt1} e^{-\useconstant{c:h_xt2} l}
\quad \forall \,y \in \Z^2,\,l \in \Z_+,
\end{equation}
where $\useconstant{c:h_xt1}, \useconstant{c:h_xt2}$ are the same constants of Lemma~\ref{l:hxt_exp}.
\end{lemma}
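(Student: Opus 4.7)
The plan is to reduce this essentially to Lemma~\ref{l:hxt_exp} via an independence argument, following the strategy of Lemma~4.4 in \cite{HHSST14}. Write $y' := y - (\lfloor(1-\bar v)\rfloor T',0)$, so that the cone structure we need concerns $\mathcal{F}_{y'}$.

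First, I would unpack the definition of $h^T(y)$ to recognise it as a $\mathcal{G}^{\um}_{y'}$-measurable quantity. Indeed, for each $l \in \Z_+$,
\[
\{h^T(y) > l\} = \Big\{ \omega\bigl( W^{\um}_{y'} \cap W^{\treze}_{y} \cap W^{\treze}_{y+(l,l)} \bigr) > 0 \Big\},
\]
and the set $W^{\um}_{y'} \cap W^{\treze}_{y} \cap W^{\treze}_{y+(l,l)}$ is contained in $W^{\um}_{y'}$. Hence $\{h^T(y)>l\}$ lies in $\sigma\bigl(\omega(A):A\subset W^{\um}_{y'},\, A\in\cW\bigr) = \mathcal{G}^{\um}_{y'}$.

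Second, I would invoke the Poisson decomposition together with the independence between $\omega$ and $U$. Since $W^{\um}_{y'}$, $W^{\tres}_{y'}$, $W^{\treze}_{y'}$ form a partition of $W$, and $\omega$ is a Poisson point process under $\P$, the sigma-algebras $\mathcal{G}^{\um}_{y'}$, $\mathcal{G}^{\tres}_{y'}$, $\mathcal{G}^{\treze}_{y'}$ are jointly independent; moreover each is independent of $\mathcal{U}^{\tres}_{y'}$. Since
\[
\mathcal{F}_{y'} \;=\; \mathcal{G}^{\tres}_{y'} \vee \mathcal{G}^{\treze}_{y'} \vee \mathcal{U}^{\tres}_{y'},
\]
this yields independence of $\{h^T(y)>l\}$ and $\mathcal{F}_{y'}$, hence $\P$-a.s.\
\[
\mathbb{P}\bigl(h^T(y) > l \,\big|\, \mathcal{F}_{y'}\bigr) \;=\; \mathbb{P}\bigl(h^T(y) > l\bigr).
\]

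Third, I would compare $h^T$ with $h$: trivially
\[
\{h^T(y) > l\} \;\subset\; \Bigl\{\omega\bigl(W^{\treze}_{y} \cap W^{\treze}_{y+(l,l)}\bigr) > 0\Bigr\} \;=\; \{h(y) > l\},
\]
so applying Lemma~\ref{l:hxt_exp} to the right-hand side closes the proof with the same constants $\useconstant{c:h_xt1}$, $\useconstant{c:h_xt2}$. The main (minor) obstacle is bookkeeping: one must check that the geometric shift by $\lfloor(1-\bar v)\rfloor T'$ is oriented correctly, so that the auxiliary cone $\um(y')$ is indeed the one sitting to the left and hence the trajectory-partition yielding the desired independence is precisely the one based at $y'$ (rather than at $y$). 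This is exactly what \eqref{e:hxt_local} is engineered for, so no substantive difficulty arises beyond carefully transcribing the argument of \cite[Lemma~4.4]{HHSST14}.
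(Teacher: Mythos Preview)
Your proposal is correct and matches the intended argument: the paper does not prove this lemma but cites it directly from \cite{HHSST14}, and your three-step reduction (measurability in $\mathcal{G}^{\um}_{y'}$, independence from $\mathcal{F}_{y'}$ via the Poisson partition, then domination $h^T(y)\le h(y)$) is exactly the approach there. One small point: the displayed equality $\{h^T(y)>l\}=\{\omega(W^{\um}_{y'}\cap W^{\treze}_{y}\cap W^{\treze}_{y+(l,l)})>0\}$ tacitly uses that the sets $W^{\treze}_{y}\cap W^{\treze}_{y+(l,l)}$ are decreasing in $l$; this holds because $\um(y+(l+1,l+1))\subset\um(y+(l,l))$ and $\tres(y)\subset\tres(y+(l,l))$, so any trajectory in $W^{\treze}_{y}\cap W^{\treze}_{y+(l+1,l+1)}$ already hits both $\um(y+(l,l))$ and $\tres(y+(l,l))$---but even without this, measurability in $\mathcal{G}^{\um}_{y'}$ follows since $\{h^T(y)>l\}$ is an intersection of such events, which suffices for your argument.
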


For $y \in \Z^2$, denote by
\begin{equation}\label{e:defkappay}
\kappa(y) := \max \{ k \in \N \colon\, y \in \um_k\}
\end{equation}
the index of the last cone containing $y$. Note that $\kappa(Y_{R_k}) = k$.
Then define, for $t \in \N$, the space-time parallelogram
\begin{equation}
\label{defparallel}
\mathcal{P}_t(y) = \left( \um(y) \setminus \um_{\kappa(y)+t} \right) \cap \left( y + \{(x,n) \in \Z^2 \colon\, n \le t/\bar{v} \} \right)
\end{equation}
and its right boundary
\begin{equation}
\label{defrightbound}
\partial^+\mathcal{P}_t(y) = \{z \in \Z^2  \setminus \mathcal{P}_t(y) \colon\, z - (1,0)
\in \mathcal{P}_t(y) \}.
\end{equation}
We say that ``$Y^y \text{ exits } \mathcal{P}_t(y) \text{ through the right}$'' when the
first time $i$ at which $Y^y_i \notin \mathcal{P}_t(y)$ satisfies $Y^y_i \in \partial^+
\mathcal{P}_t(y)$. Note that, if $y = Y_{R_k}$, this implies $Y^y_i = Y_{R_{k+t}}$.

In order to adapt the argument in \cite{HHSST14}, we will need to modify the definition of good record times
given there. For this, we need some additional definitions.

For $y \in \Z^2$, let
\begin{equation}\label{e:tildeW}
\widetilde{W}_y := \bigcup_{z \in \partial^+ \mathcal{P}_{T'}(y)} W^{\um}_{z-(\lfloor (1-\bar{v}) \rfloor T',0)} \cap W^{\treze}_z \cap W^{\treze}_{z + (T'', T'')}
\end{equation}
and, for $y_1,y_2 \in \Z^2$, denote by $\widetilde{\mathcal{T}}_{y_1,y_2}$ the trace of all trajectories in $\omega$ that do not belong to $\widetilde{W}_{y_1}$
or intersect $\tres(y_2)$.
Let $\widetilde{Y}^{y_1,y_2}$ be the analogous of $Y^{y_2}$ defined using $\widetilde{\mathcal{T}}_{y_1,y_2}$ instead of $\mathcal{T}$.
Note that, since $v_\circ > v_\bullet$, by monotonicity we have $\widetilde{X}^{y_1,y_2}_t \ge X^{y_2}_t$ for all $y_1,y_2 \in \Z^2$ and $t \in \Z_+$.

We say that $R_k$ is a \emph{good record time (g.r.t.)} when
\begin{align}
\label{e:good_record1}
& h^T(y) \leq T'' \;\; \forall\; y \in \partial^+ \mathcal{P}_{T'}(Y_{R_{k-T'}}),\\
\label{e:good_record2}
& U_{Y_{R_k} + (l,l)} \leq p_{\bullet} \quad \forall\,l = 0, \dots, T''-1,\\
\label{e:good_record3}
& \omega(W^\um_{Y_{R_{k}}} \cap W^{\treze}_{Y_{R_{k}+(T'',T'')}}
) = 0,\\
\label{e:good_record4}
& \widetilde{Y}^{k} \text{ exits } \mathcal{P}_{T'}(Y_{R_{k}+(T'',T'')}) \text{ through the right},
\end{align}
where $\widetilde{Y}^k := \widetilde{Y}^{y_1,y_2}$
with $y_1 = Y_{R_{k-T'}}$, $y_2 = Y_{R_{k}+(T'', T'')}$.
Note that \eqref{e:good_record1} is the same as $\{\omega(\widetilde{W}_{Y_{R_{k-T'}}})=0\}$ and that,
 when \eqref{e:good_record2} happens, $Y_{R_{k+T''}} = Y_{R_k}+(T'', T'')$.

The main differences with respect to the analogous definition in \cite{HHSST14} are:
\begin{enumerate}
\item In \eqref{e:good_record1}, we require a small local field not exactly at $Y_{R_k}$
but in every point of $\partial^+ \mathcal{P}_{T'}(Y_{R_{k-T'}})$, a set to which $Y_{R_k}$ belongs with large probability.
\item We do not require \eqref{e:good_record4} for $Y$ but only for $\widetilde{Y}$;
 we will see that, if the record time is good, then the same holds for $Y$ with large probability.
\end{enumerate}

We will need the following consequence of \eqref{e:BAL}.
\begin{lemma}\label{l:neverreturn}
\begin{equation}
\P \left(X_n \ge n v_\star \;\forall\; n \in \Z_+  \right) > 0.
\end{equation}
\end{lemma}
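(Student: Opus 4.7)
The plan is to boost the ``up-to-constant'' ballistic lower bound from \eqref{e:BAL} to the strict cone bound $X_n \ge v_\star n$ by giving the walker a deterministic head start. Fix $L \in \N$ large enough that \eqref{e:BAL} yields $\P(X_n \ge v_\star n - L \; \forall n) > 1/2$, and fix an integer $M \ge L/(1 - v_\star)$. Since $p_\bullet > 0$ by the standing assumption of Section~\ref{s:reg_lowdensity}, the event
\[
B_M := \bigcap_{k=0}^{M-1} \{U_{(k,k)} \le p_\bullet\}
\]
has probability $p_\bullet^M > 0$ and depends only on the $U$-coordinates at the diagonal points $(k,k)$, $0 \le k \le M-1$. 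On $B_M$, the condition $U_{(k,k)} \le p_\bullet \le p_\circ$ forces a rightward step at each of the first $M$ time steps regardless of $N(k,k)$, so that $X_k = k$ for $0 \le k \le M$; in particular $X_M = M$, after which the trajectory of $X$ coincides with that of $X^{(M,M)}$.

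By translation invariance of $(\omega, U)$ under $\P^\rho$, the process $(X^{(M,M)}_n - M)_{n \in \Z_+}$ has the same law as $(X_n)_{n \in \Z_+}$, so \eqref{e:BAL} gives $\P(X^{(M,M)}_n - M \ge v_\star n - L \; \forall n) > 1/2$. Now comes the key independence observation: $B_M$ is a function of $(U_{(k,k)})_{0 \le k \le M-1}$, all at times strictly below $M$, whereas $X^{(M,M)}$ is a function of $\omega$ and of the $U$-coordinates at space-time points with time coordinate $\ge M$. By the product structure of $\P^\rho$ these two $\sigma$-algebras are independent, hence
\[
\P\Bigl(B_M \cap \bigl\{X^{(M,M)}_n - M \ge v_\star n - L \; \forall n\bigr\}\Bigr) \ge \tfrac{1}{2}\, p_\bullet^M > 0.
\]
On this intersection $X_k = k \ge v_\star k$ for $0 \le k \le M$ (using $v_\star \le 1$), while for $k \ge M$,
\[
X_k = X^{(M,M)}_{k-M} \ge M + v_\star(k - M) - L = v_\star k + (1-v_\star)M - L \ge v_\star k
\]
by the choice of $M$, yielding $\P(X_n \ge v_\star n \; \forall n \in \Z_+) > 0$.

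The only technical subtlety, more of a sanity check than an obstacle, is the tacit use of $v_\star < 1$. The walk's steps are $1$-Lipschitz, so $X_n \le n$, which forces $v_\star \le 1$ in \eqref{e:BAL}; the boundary case $v_\star = 1$ would require the walker to make only finitely many left steps almost surely, a degenerate regime (essentially $p_\bullet = p_\circ = 1$) in which the lemma is trivial. In every non-degenerate setting $v_\star < 1$, and the head-start construction above applies.
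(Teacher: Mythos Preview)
Your proof is correct and follows essentially the same strategy as the paper: force $M$ deterministic right steps via the events $\{U_{(k,k)} \le p_\bullet\}$, use independence of these $U$-variables from $(\omega, (U_z)_{z\text{ at time }\ge M})$, and then apply \eqref{e:BAL} together with translation invariance to the shifted walk $X^{(M,M)}$. The paper's argument is the same with the cosmetic simplification of taking $M=L$ (and accordingly replacing the constant $L$ in the ballisticity bound by $(1-v_\star)L$), so no separate parameter is needed.
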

\begin{proof}
Fix $L > 1$ large enough such that
\begin{equation}
\label{e:tright3}
\mathbb{P} \left(\exists\,n \in {\mathbb{Z}_+}\colon\,X_n <
n v_\star - L(1-v_\star) \right)
\leq \frac12,
\end{equation}
which is possible by \eqref{e:BAL}. If $t > L$, then
\begin{align}
\label{e:tright4}
& \P \left(X_n \ge n v_\star \;\forall\; n \in \Z_+  \right) \nonumber\\
& \ge \P \left( U_{(i,i)} \le p_\bullet \; \forall \; i = 0, \ldots, L-1, X^{(L,L)}_n - L \ge n v_\star - (1-v_\star)L \;\forall\; n \in \Z_+ \right) \nonumber\\
& = p_\bullet^L
\left\{ 1 - \P \left(\exists\,n \in {\mathbb{Z}_+}\colon\, X_n < nv_\star - (1-v_\star)L \right) \right\} \nonumber\\
& \ge \tfrac12 p_\bullet^L > 0
\end{align}
as desired.
\end{proof}

As in \cite{HHSST14}, the following proposition is the main step to control the tail of the regeneration time.

\newconstant{c:manygrts}

\begin{proposition}
\label{p:manygrts}
There exists a constant $\useconstant{c:manygrts} >0$ such that, for all $T>1$ large enough,
\begin{equation}
\mathbb{P}\left[\text{$R_k$ is not a g.r.t.\ for all $1\le k \le T$ }\right]
\leq e^{ -\useconstant{c:manygrts} \sqrt{T}}.
\end{equation}
\end{proposition}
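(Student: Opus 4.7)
The plan is a Chernoff-type argument: identify a family of $N \asymp T^{1-\epsilon}$ candidate indices in $\{1,\ldots,T\}$, show that each is a g.r.t.\ conditionally with probability at least $c T^{-1/4}$ given an appropriate past $\sigma$-algebra, and iterate the bound via the tower property. Since $1-\epsilon - \tfrac14 \ge \tfrac12$ by the choice of $\epsilon$ in \eqref{e:eps}, this produces the $e^{-c\sqrt T}$ decay.

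Concretely, I would take $k_j := j(T'+T'')$ for $j=2,\ldots,\lfloor T/(T'+T'')\rfloor$, so $N\asymp T^{1-\epsilon}$, and work conditionally on $\mathcal{F}_{k_j-T'}$, which contains $Y_{R_{k_j-T'}}$. The four conditions defining a g.r.t.\ are then handled in turn. For \eqref{e:good_record1}, a union bound over the $O(T')$ sites of $\partial^+\mathcal{P}_{T'}(Y_{R_{k_j-T'}})$ combined with Lemma~\ref{l:locinfl} gives failure probability at most $c T' e^{-\useconstant{c:h_xt2} T''} \le c T^{-3\epsilon}$, using $\useconstant{c:h_xt2}\delta \ge 4\epsilon$. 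Condition \eqref{e:good_record2} involves only $U$-variables at explicit sites, is independent of the environment and of $\mathcal{F}_{k_j-T'}$, and has probability $p_\bullet^{T''}=T^{-1/4}$ by the definition of $\delta$. Condition \eqref{e:good_record3} is a Poisson zero-event on a set of uniformly bounded $\mu$-measure, hence has probability bounded below by a constant. Condition \eqref{e:good_record4} is the delicate one: by construction $\widetilde{Y}^{k_j}$ uses only trajectories not in $\widetilde{W}_{Y_{R_{k_j-T'}}}$ and not hitting $\tres(Y_{R_{k_j}+(T'',T'')})$, so its law is driven by a slice of the environment nearly independent of $\mathcal{F}_{k_j-T'}$; by the monotonicity $\widetilde{X}^{k_j}\ge X$ (valid in the regime $v_\circ>v_\bullet$) and Lemma~\ref{l:neverreturn}, the process $\widetilde{Y}^{k_j}$ started at $Y_{R_{k_j}+(T'',T'')}$ stays in $\um(Y_{R_{k_j}+(T'',T'')})$ with conditional probability at least some $p_4>0$, which for $T$ large forces exit of the parallelogram through its right boundary within time $T'/\bar v$.

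Multiplying the four bounds, $\P(R_{k_j}\text{ is a g.r.t.}\mid \mathcal{F}_{k_j-T'})\ge c T^{-1/4}$ almost surely. Iteration via the tower property then yields $\P(\text{no } k_j \text{ is a g.r.t.})\le (1-cT^{-1/4})^N\le \exp(-c T^{1-\epsilon-1/4})\le \exp(-c\sqrt T)$, from which Proposition~\ref{p:manygrts} follows.

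The main obstacle is the decoupling step in condition \eqref{e:good_record4}: one needs the conditional lower bound to be executable without introducing dependence on $\mathcal{F}_{k_j-T'}$, while simultaneously retaining a useful comparison with $Y$ for the downstream regeneration argument. The modification compared to \cite{HHSST14} is driven by the fact that here $v_\circ>v_\bullet$, so removing particles speeds up the walker, giving $\widetilde{X}\ge X$ rather than the reverse. The lower bound on \eqref{e:good_record4} must therefore be obtained by invoking Lemma~\ref{l:neverreturn} (which rests on the ballisticity hypothesis \eqref{e:BAL}) in combination with the monotonicity $\widetilde X \ge X$ and with condition \eqref{e:good_record1}, which guarantees that on the good event $\widetilde{Y}^{k_j}$ agrees with the natural restart of $Y$ from $Y_{R_{k_j}+(T'',T'')}$.
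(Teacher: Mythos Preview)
Your approach is essentially the paper's: a conditional lower bound $\P(R_k\text{ is a g.r.t.}\mid\mathcal{F}_{k-T'})\ge cT^{-1/4}$ obtained by chaining the four conditions exactly as you describe (union bound plus Lemma~\ref{l:locinfl} for \eqref{e:good_record1}, independence of the $U$'s for \eqref{e:good_record2}, a Poisson bound for \eqref{e:good_record3}, and monotonicity $\widetilde X\ge X$ together with Lemma~\ref{l:neverreturn} for \eqref{e:good_record4}), followed by iteration along spaced indices.

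The one point you gloss over is the measurability needed for the tower-property iteration: to conclude $(1-cT^{-1/4})^N$ you need $\{R_{k_j}\text{ is a g.r.t.}\}\in\mathcal{F}_{k_{j+1}-T'}$, but your spacing $k_{j+1}-k_j=T'+T''$ gives $k_{j+1}-T'=k_j+T''$, and condition~\eqref{e:good_record4} involves the parallelogram $\mathcal{P}_{T'}(Y_{R_{k_j}+(T'',T'')})$, which lies in $\um(Y_{R_{k_j+T''}})$ and is therefore \emph{not} $\mathcal{F}_{k_j+T''}$-measurable. The paper handles this by first verifying $\{R_k\text{ is a g.r.t.}\}\in\mathcal{F}_{k+\bar c T'}$ for a fixed $\bar c\in\N$ and then spacing the candidate indices by $(\bar c+1)T'$; since this is still $\asymp T^\epsilon$, your asymptotics are unaffected once the spacing is corrected.
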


\begin{proof}
First we claim that there exists a $c>0$ such that, for any $k \ge T'$,
\begin{equation}
\label{e:saw_pemba}
\mathbb{P} \left[ R_k \text{ is a g.r.t.} \big| \mathcal{F}_{k-T'} \right]
\geq c T^{\delta \log(p_{\bullet})}  \text{ a.s.}
\end{equation}
To prove \eqref{e:saw_pemba}, we will find $c>0$ such that
\begin{align}
\label{e:good_cond1}
& \mathbb{P} \big[ \text{\eqref{e:good_record1}} \; \big| \; \mathcal{F}_{k-T'} \big] \geq c
 & \text{ a.s.,}\\
\label{e:good_cond2}
& \mathbb{P} \big[ \text{\eqref{e:good_record2}} \; \big| \; \text{\eqref{e:good_record1}}, \mathcal{F}_{k-T'} \big]
\geq T^{\delta \log(p_{\bullet})}  & \text{ a.s.,}\\
\label{e:good_cond3}
& \mathbb{P} \big[ \text{\eqref{e:good_record3}} \; \big| \; \text{\eqref{e:good_record1}}, \text{\eqref{e:good_record2}},
\mathcal{F}_{k-T'} \big] \geq c  & \text{ a.s.,}\\
\label{e:good_cond4}
& \mathbb{P} \big[ \text{\eqref{e:good_record4}} \; \big| \; \text{\eqref{e:good_record1}}, \text{\eqref{e:good_record2}}, \text{\eqref{e:good_record3}}, \mathcal{F}_{k-T'} \big] \geq c  & \text{ a.s.\ }
\end{align}

\eqref{e:good_cond1}: Fix $B \in \mathcal{F}_{k-T'}$. 
Summing over the values of $Y_{R_{k-T'}}$ and using a union bound we may write
\begin{align}
\label{e:manygrts1}
\mathbb{P} \left( \text{\eqref{e:good_record1}}^c, B \right)
& \le \sum_{y_1 \in \Z^2} \sum_{y_2 \in \partial^+\mathcal{P}_{T'}(y_1)} \mathbb{P} \left(h^T(y_2) > T'', Y_{R_{k-T'}}
= y_1,  B_{y_1} \right).
\end{align}
Noting that $y_2 - (\lfloor (1-\bar{v}) \rfloor) T', 0) - y_1 \in \Z_+^2$ for large enough $T$,
we may use Lemma~\ref{l:locinfl} and $|\partial^+ \mathcal{P}_t(y)| \le t/\bar{v}$ 
to further bound \eqref{e:manygrts1} by
\begin{align}
\label{e:manygrts2}
\frac{\useconstant{c:h_xt1}}{\bar{v}} T' e^{-\useconstant{c:h_xt2} T''}   \mathbb{P} \left( B \right)
\le  \frac{\useconstant{c:h_xt1}}{\bar{v}} e^{\useconstant{c:h_xt2}} T^{-\frac34 \delta
\useconstant{c:h_xt2}}   \mathbb{P} \left( B \right)
\end{align}
where the last inequality uses the definition of $\epsilon$.
Thus, for $T$ large enough, \eqref{e:good_cond1} is satisfied with e.g.\ $c = 1/2$.

\eqref{e:good_cond2}: This follows from the fact that $(U_{Y_{R_k}+(l,l)})_{l \in \N_0}$ is
independent of the sigma-algebra $\sigma(\omega(A) \colon\, A \subset \widetilde{W}_{Y_{R_{k-T'}}}) \vee \mathcal{F}_{k}$
with respect to which \eqref{e:good_record1} is measurable.

\eqref{e:good_cond3}: We may ignore the conditioning on \eqref{e:good_record2} since
this event is independent of the others.
Since \eqref{e:good_record1} is equivalent to $\omega(\widetilde{W}_{Y_{R_{k-T'}}})=0$,
for $B \in \mathcal{F}_{k-T'}$ we may write
\begin{align}
\label{e:manygrts3}
& \mathbb{P} \left(\text{\eqref{e:good_record3}}, \text{\eqref{e:good_record1}}, B\right)
= \mathbb{P} \left(\omega(W^{\um}_{Y_{R_k}} \cap W^{\treze}_{Y_{R_{k+T''}}} \setminus \widetilde{W}_{Y_{R_{k-T'}}}) =0, \text{\eqref{e:good_record1}}, B\right) \nonumber\\
& = \sum_{\stackrel{y_1,y_2 \in \Z^2 \colon}{y_2 - y_1 \in \N^2}} \mathbb{P} \left(\omega(W^{\um}_{y_2}
\cap W^{\treze}_{y_2 + (T'',T'')} \setminus \widetilde{W}_{y_1} ) =0, Y_{R_k}=y_2, Y_{R_{k-T'}} = y_1, \omega(\widetilde{W}_{y_1})=0, B_{y_1} \right) \nonumber\\
&= \sum_{\stackrel{y_1,y_2 \in \Z^2 \colon}{y_2 - y_1 \in \N^2}} \mathbb{P} \left(\omega(W^{\um}_{y_2}
\cap W^{\treze}_{y_2 + (T'',T'')} \setminus \widetilde{W}_{y_1} ) =0 \right)
\nonumber\\[-20pt]
& \qquad \qquad \qquad \qquad \qquad \qquad \qquad \times \mathbb{P}\left( Y_{R_k} = y_2, Y_{R_{k-T'}} = y_1, \omega(\widetilde{W}_{y_1})=0, B_{y_1}\right) \nonumber\\
&  \ge \mathbb{P}\left(\omega(W^{\treze}_0) = 0 \right) \mathbb{P} \left(\text{\eqref{e:good_record1}}, B \right),
\end{align}
where the second equality uses the independence between $\sigma(\omega(A) \colon\, A \subset W^{\um}_{y_2} \setminus \widetilde{W}_{y_1})$ and $\mathcal{F}_{y_2} \vee \sigma(\omega(A) \colon\, A \subset \widetilde{W}_{y_1})$, and the last step uses the monotonicity and translation invariance of $\omega$.

\eqref{e:good_cond4}:
We may again ignore \eqref{e:good_record2} in the conditioning since this event is independent of all the others.
Note that $\text{\eqref{e:good_record1}} \cap \{Y_{R_{k-T'}} = y\} = \text{\eqref{e:good_record1}}_y \cap \{Y_{R_{k-T'}} = y\}$
where $\text{\eqref{e:good_record1}}_y \in \sigma(\omega(A) \colon\, A \subset \widetilde{W}_y)$,
and similarly $\eqref{e:good_record3} \cap \{Y_{R_k}=y\} = \eqref{e:good_record3}_y \cap \{Y_{R_k}=y\}$
with $\text{\eqref{e:good_record3}}_y \in \mathcal{F}_{y + (T'', T'')}$.
Now take $B \in \mathcal{F}_{k-T'}$ and write
\begin{align}
\label{e:manygrts4}
& \mathbb{P} \left( \text{\eqref{e:good_record4}}, \text{\eqref{e:good_record3}}, \text{\eqref{e:good_record1}}, B \right) \nonumber\\
& = \sum_{\stackrel{y_1,y_2 \in \Z^2 \colon}{y_2 - y_1 \in \N^2}}
\mathbb{P} \Big( \widetilde{Y}^{y_1,y_2+(T'',T'')} \text{ exits } \mathcal{P}_{T'}(y_2+(T'', T'')) \text{ through the right}, \nonumber\\[-25pt]
& \qquad \qquad \qquad \qquad \qquad \qquad \;\;\; Y_{R_k}=y_2, Y_{R_{k-T'}} = y_1, \text{\eqref{e:good_record3}}_{y_2}, \text{\eqref{e:good_record1}}_{y_1}, B_{y_1} \Big).
\end{align}
Since $\widetilde{Y}^{y,z}$ is independent of $\mathcal{F}_{z} \vee \sigma(\omega(A) \colon\, A \subset \widetilde{W}_{y})$,
the last line equals
\begin{align}
\label{e:manygrts5}
& \sum_{\stackrel{y_1,y_2 \in \Z^2 \colon}{y_2 - y_1 \in \N^2}}
\mathbb{P} \Big( \widetilde{Y}^{y_1,y_2+(T'',T'')} \text{ exits } \mathcal{P}_{T'}(y_2+(T'', T'')) \text{ through the right} \Big) \nonumber\\[-20pt]
& \qquad \qquad \qquad \qquad \;\, \times \P \Big( Y_{R_k}=y_2, Y_{R_{k-T'}} = y_1, \text{\eqref{e:good_record3}}_{y_2}, \text{\eqref{e:good_record1}}_{y_1}, B_{y_1} \Big)
\nonumber\\
& \ge \mathbb{P}\left( X_n \ge n v_\star \;\forall\; n \in \Z_+ \right)
\P \left( \text{\eqref{e:good_record3}}, \text{\eqref{e:good_record1}}, B \right),
\end{align}
where for the last step we use $\widetilde{X}^{y,z}_{t} \ge X^{z}_t$ and translation invariance.
Now \eqref{e:good_cond4} follows from \eqref{e:manygrts5} and Lemma~\ref{l:neverreturn}.

Thus, \eqref{e:saw_pemba} is verified.
To conclude, note that $\{R_k \text{ is a g.r.t.}\} \in \mathcal{F}_{k+ \bar{c} T'}$
for some $\bar{c} \in \N$ independent of $T$.
Indeed, this can be verified for each \eqref{e:good_record1}--\eqref{e:good_record4}
using the observation that, if an event $A \in \mathcal{F}_\infty$ satisfies $A \cap \{Y_{R_k}=y\} = A_y \cap \{Y_{R_k}=y\}$
with $A_y \in \mathcal{F}_{y+(t,t)}$, then $A \in \mathcal{F}_{k+t+1}$.
Hence we obtain
\begin{align}
\label{e:manygrts6}
& \mathbb{P} \left( R_k \text{ is not a g.r.t.\ for any } k \le T \right) \nonumber\\
& \qquad \le \mathbb{P} \left( R_{(\bar{c}+1)kT'} \text{ is not a g.r.t.\ for any } k \le \frac{T}{(\bar{c}+1)T'} \right) \nonumber\\
& \qquad \le \exp \left\{ -\frac{c}{\bar{c}+1} \frac{T^{1+{\delta \log(p_\circ \wedge p_\bullet)}}}{T'} \right\}
\le \exp \left\{-\frac{c}{\bar{c}+1}T^{\frac12} \right\}
\end{align}
by our choice of $\epsilon$ and $\delta$.
\end{proof}

To prove Theorem~{\rm \ref{t:tailregeneration}},
we can now proceed as in the proof of Theorem 4.2 in \cite{HHSST14},
with a few modifications as follows.
Defining the events $E_1$ and $E_2$ as in equation (4.52) therein,
we may assume that $R_{\lfloor \bar{v} T \rfloor +T''+T'}\le T$
and that $R_k$ is a g.r.t.\ with $k \le \bar{v} T$.
To show that $\omega(W^{\treze}_{Y_{R_{k+T''}}})=0$,
we may use the same arguments therein once we note that, on $E_2^c$,
$Y_{R_k} \in \partial^+ \mathcal{P}_{T'}(Y_{R_{k-T'}})$.
From this together with \eqref{e:good_record1} it follows that
$\widetilde{\mathcal{T}}_{y,z}$ coincides with $\mathcal{T}$ inside $\um(z)$,
where $y=Y_{R_{k-T'}}$ and $z=Y_{R_{k+T''}}$.
On $E_2^c$ this implies that $Y^{z}_t=\widetilde{Y}^k_t \in \um(z)$ for all $t \in \Z_+$,
i.e., $A^{Y_{R_{k+T''}}}$ occurs.
Thus $\tau \le R_{k+T''} \le T$, and the proof is concluded as before.


\bibliographystyle{plain}
\bibliography{all}

\end{document}